\let\@wraptoccontribs\wraptoccontribs
\title{The enumerative geometry and arithmetic of banana nano-manifolds}
\author{Jim Bryan and Stephen Pietromonaco}
\date{\today}
\newtheorem{theorem}{Theorem}
\newtheorem{proposition}[theorem]{Proposition}
\newtheorem{lemma}[theorem]{Lemma}
\newtheorem{corollary}[theorem]{Corollary}
\newtheorem{question}{Question}[theorem]
\theoremstyle{definition}
\newtheorem{def-theorem}[theorem]{Definition-Theorem}
\newtheorem{remark}[theorem]{Remark}
\newcommand{\CC} {{\mathbb C}}          
\newcommand{\RR} {{\mathbb R}}		
\newcommand{\ZZ} {{\mathbb Z}}		
\newcommand{\QQ} {{\mathbb Q}}		
\newcommand{\PP} {{\mathbb P}}		
\newcommand{\HH} {{\mathbb H}}		
\newcommand{\FF} {{\mathbb F}}		
\newcommand{\DD} {{\mathbb D}}		
\newcommand{\XX} {{\mathbb X}}		
\renewcommand{\AA} {{\mathbb A}}
\newcommand{\Tr}{\operatorname{tr}}
\newcommand{\Hilb}{\operatorname{Hilb}}
\newcommand{\Spec}{\operatorname{Spec}}
\newcommand{\Frob}{\operatorname{Frob}}
\newcommand{\Gr}{\operatorname{Gr}}
\newcommand{\Sp}{\operatorname{Sp}}
\newcommand{\diag}{\operatorname{diag}}
\newcommand{\Gal}{\operatorname{Gal}}
\newcommand{\sing}{\mathsf{sing}}
\newcommand{\con}{\mathsf{con}}
\newcommand{\loc}{\mathsf{loc}}
\newcommand{\Sch}{\mathsf{Sch}}
\newcommand{\ban}{\mathsf{ban}}
\newcommand{\Ver}{\mathsf{Ver}}
\newcommand{\et}{\mathsf{\acute{e}t}}
\newcommand{\DT}{\mathsf{DT}}
\newcommand{\OO}{\mathcal{O}}
\newcommand{\Bl}{\operatorname{Bl}}
\newcommand{\dvec}{\vec{\mathbf{d}}}
\newcommand{\half}{\frac{1}{2}}
\newcommand{\Fhat}{\widehat{F}}
\newcommand{\Pic}{\operatorname{Pic}}
\newcommand{\XtildeN}{\widetilde{X}_{N}}
\newcommand{\Stilde}{\widetilde{S}}
\newcommand{\Sprimetilde}{\widetilde{S}'}
\newcommand{\Deltatilde}{\widetilde{\Delta }}
\newcommand{\Ftilde}{\widetilde{F}}
\newcommand{\Gammatilde}{\widetilde{\Gamma }}
\newcommand{\atilde}{\widetilde{a}}
\newcommand{\btilde}{\widetilde{b}}
\newcommand{\ctilde}{\widetilde{c}}
\newcommand{\ftilde}{\widetilde{f}}
\newcommand{\fprimetilde}{\widetilde{f}'}
\newcommand{\epsilontilde}{\widetilde{\epsilon}}
\newcommand{\deltatilde}{\widetilde{\delta}}
\newcommand{\gammatilde}{\widetilde{\gamma }}
\newcommand{\betatildesubd}{\widetilde{\beta}_{\dvec }(k) }
\newcommand{\divides}{\mathbin{\Big{|}}}
\begin{document}
\begin{abstract}
A banana manifold is a Calabi-Yau threefold fibered by Abelian
surfaces whose singular fibers contain banana configurations: three
rational curves meeting each other in two points. A nano-manifold is a
Calabi-Yau threefold $X$ with very small Hodge numbers:
$h^{1,1}(X)+h^{2,1}(X)\leq 6$. We construct four rigid banana
nano-manifolds $\XtildeN$, $N\in \{5,6,8,9 \}$, each with Hodge
numbers given by $(h^{1,1},h^{2,1})=(4,0)$.

We compute the Donaldson-Thomas partition function for banana curve
classes and show that the associated genus $g$ Gromov-Witten potential
is a genus 2 meromorphic Siegel modular form of weight $2g-2$ for a
certain discrete subgroup $P^{*}_{N} \subset \Sp_{4}(\RR)$.

We also compute the weight 4 modular form whose $p$th Fourier
coefficient is given by the trace of the action of Frobenius on
$H^{3}_{\et }(\XtildeN ,{\QQ}_{\ell})$ for almost all prime
$p$. We observe that it is the unique weight 4 cusp form on 
$\Gamma_{0}(N)$.
\end{abstract}

\maketitle 



\section{Introduction: the geography, enumerative geometry, and
arithmetic of Calabi-Yau threefolds.}

In this paper, a \emph{Calabi-Yau threefold (CY3)} is a smooth,
projective threefold $X$ over $\CC$ with $K_{X}\cong \OO_{X}$ and
$H^{1}(X,\CC )=0.$ We are interested three aspects of a CY3
$X$. Namely, geography (the Hodge numbers of $X$), enumerative
geometry (curve counting on $X$), and arithmetic (for rigid $X$,
counting points over $\FF_{p}$ gives rise to a weight 4 modular form).


In this paper, we construct four new CY3s which are interesting from
all three points of view.

\subsection{Geography} The Hodge numbers of a CY3 $X$ are determined
by the two values $h^{1,1}(X)$ and $h^{2,1}(X)$ which have geometric
significance: $h^{1,1}(X)$ is the Picard number of $X$ and
$h^{2,1}(X)$ is the dimension of the space of deformations of $X$. The
\emph{geography problem for CY3s} asks which pairs of numbers
$(h^{1,1},h^{2,1})$ occur as the Hodge numbers of a CY3. There has
been some recent interest in the physics community in the existence of
CY3s with small Hodge numbers. Candelas, Constantin, and Mishra
\cite{Candelas-CY3-small-Hodge} list all known (at the time) CY3s with height
$h^{1,1}+h^{2,1}\leq 24$. We call a CY3 with height satisfying
\[
h^{1,1}+h^{2,1}\leq 6
\]
a \emph{nano-manifold}. We construct four new examples of rigid
nano-manifolds of height 4.

\begin{theorem}\label{thm: XtildeN exists}
There exist CY3s $\XtildeN$, $N\in \{5,6,8,9 \}$ with 
\[
h^{1,1}(\XtildeN )=4,\quad h^{2,1}(\XtildeN  ) = 0.
\]
There is an Abelian surface fibration $\XtildeN \to \PP^{1}$,
with four singular fibers, whose generic fiber is an Abelian surface
of Picard number three. $\Pic (\XtildeN )$ is spanned by the
class of the fiber and the three divisor classes of the generic
fiber. $\XtildeN$ has fundamental group of order $N$
(c.f. Theorem~\ref{thm: BlGamma(Xcon) is a resolution}).
\end{theorem}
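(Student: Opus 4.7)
The plan is to construct $\XtildeN$ as a crepant resolution of a free quotient of a relative fiber product of elliptic surfaces. For each $N\in\{5,6,8,9\}$, Beauville's classification provides a rational elliptic surface $S_{N}\to\PP^{1}$ with exactly four singular fibers, all semistable of type $I_{k}$, and carrying a section $\sigma$ of order $N$ in its Mordell--Weil group. The relative fiber product $S_{N}\times_{\PP^{1}}S_{N}$ is then an Abelian-surface fibration whose generic fiber is a product of isogenous but generically non-isomorphic elliptic curves, hence an Abelian surface of Picard number three.

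I would proceed in three stages. First, produce a birational model $X^{\con}$ of $S_{N}\times_{\PP^{1}}S_{N}$ in which the four singular fibers are replaced by banana configurations, and on which the antidiagonal translation action of $\sigma$ extends to a free $\ZZ/N$-action. Quotienting and then applying the resolution $\Bl_{\Gamma}$ of Theorem~\ref{thm: BlGamma(Xcon) is a resolution} produces a smooth threefold $\XtildeN\to\PP^{1}$. The Calabi--Yau properties should follow essentially formally: triviality of $K$ because $S_{N}\times_{\PP^{1}}S_{N}$ has trivial canonical class, the $\ZZ/N$-action is by fiberwise translations (so symplectic and trivial on $H^{0}(K)$), and the resolution $\Bl_{\Gamma}$ is crepant; and $H^{1}(\XtildeN,\CC)=0$ follows from the Leray spectral sequence for the fibration together with the triviality, after the $\ZZ/N$-quotient, of the monodromy invariants of $H^{1}$ of the generic fiber.

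The real work is pinning down the Hodge numbers. For $h^{1,1}=4$ I would exhibit the four claimed classes (the fiber class together with pullbacks of the three generators of $\Pic$ of the generic Abelian fiber) and show that the exceptional divisors introduced by $\Bl_{\Gamma}$ are numerically expressible in terms of these, via a Mayer--Vietoris analysis concentrated over the four critical points of the base. For $h^{2,1}=0$ I would argue that there are no nontrivial deformations: the Beauville surface $S_{N}$ is rigid in its family, the $N$-torsion section and associated $\ZZ/N$-action are canonical, and the small resolution is unique up to flop; combined with Serre duality and the computed Euler characteristic, this pins down both Hodge numbers. Finally, the order-$N$ fundamental group follows from the freeness of the $\ZZ/N$-quotient together with simple-connectivity of the universal cover, which can be checked by van Kampen on the Abelian-surface fibration, noting that $\pi_{1}$ of the generic Abelian fiber is killed by monodromy around the banana singular fibers.

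The hardest part will be the exceptional-class bookkeeping in the $h^{1,1}=4$ calculation: controlling exactly which new numerical classes are created by $\Bl_{\Gamma}$ and verifying that they do not enlarge the Picard lattice beyond the expected rank four. Everything else reduces either to formal properties of fiber products and crepant resolutions, or to a finite check over the four special points of $\PP^{1}$.
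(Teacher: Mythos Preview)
Your construction diverges from the paper's in a way that creates a genuine gap. You work with the self-product $S_{N}\times_{\PP^{1}}S_{N}$ and an antidiagonal translation action; the paper instead forms $X_{\con}=S\times_{\PP^{1}}S'$ where $S'$ is the minimal resolution of $S/G_{N}$. The point of passing to $S'$ is that its singular fibers are of type $I_{N/k}$ over the points where $S$ has type $I_{k}$, so the singular fibers of $X_{\con}$ are $I_{k}\times I_{N/k}$ and the diagonal $G_{N}$-action permutes the $N$ conifold points in each such fiber simply transitively. On your self-product the singular fibers are $I_{k}\times I_{k}$; the stabilizer of a node in $I_{k}$ has order $N/k$, and the (anti)diagonal stabilizer of a conifold point $(n_{i},n_{j})$ contains this subgroup, which is nontrivial whenever $k<N$. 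So your action is not free, and no small birational modification will fix this: a $\ZZ_{N/k}$-action on an exceptional $\PP^{1}$ still has fixed points. A related slip: for $N\in\{8,9\}$ the Mordell--Weil group is $\ZZ_{4}\times\ZZ_{2}$ or $\ZZ_{3}\times\ZZ_{3}$, so there is no single section of order $N$. Also note that the generic fiber of $S_{N}\times_{\PP^{1}}S_{N}$ is $E\times E$, not a product of distinct isogenous curves; the isogenous-but-distinct description only appears after you have built in the quotient on one factor, which is exactly what the paper's $S'$ accomplishes.

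Your plan for the Hodge numbers is also harder than necessary. The paper does not do any exceptional-class bookkeeping for $h^{1,1}$. It first proves $h^{2,1}(X_{N})=0$ by the deformation-theoretic argument you sketch (deformations of the conifold $X_{\con}$ all smooth some node because $S$ and $S'$ are extremal), and then gets $h^{1,1}$ from the Euler characteristic: $e(X_{\con})$ equals the number $4N$ of conifold points (the smooth fibers contribute zero and the singular fibers have a $\CC^{*}\times\CC^{*}$-action fixing only the nodes), so $e(X_{N})=8N$, hence $e(\XtildeN)=8$ and $h^{1,1}(\XtildeN)=4$. The statement that $\Pic(\XtildeN)$ is spanned by the fiber class and the three generic-fiber classes is then established separately in Section~\ref{sec: Divs curves and int numbers on XN} by exhibiting those four classes and knowing the rank is $4$.
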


\subsection{Enumerative Geometry}

The enumerative geometry of a CY3 $X$ is the study of curve counting
on $X$. There are several equivalent curve counting theories on a
CY3. We are particularly interested in \emph{Donaldson-Thomas (DT)}
invariants, \emph{Gromov-Witten (GW)} invariants, and
\emph{Gopakumar-Vafa (GV)} invariants.

\subsubsection{DT theory.}
As usual, we may define the DT invariants of a CY3 $X$
as the Behrend function weighted Euler characteristic of the Hilbert
scheme  
\[
\DT_{n,\beta}(X) = e(\Hilb^{n,\beta}(X),\nu ).
\]
Here $\Hilb^{n,\beta}(X)$ is the Hilbert scheme of subschemes of $X$
supported in the class $\beta \in H_{2}(X)$ and having holomorphic
Euler characteristic $n$, and $e(-,\nu )$ denotes topological Euler
characteristic weighted by Behrend's constructible function $\nu$
\cite{Behrend-micro}.

For a basis of divisor classes $D_{1},\dotsc ,D_{r}$, we define the
\emph{DT partition function} by
\[
Z_{X}(p,q_{1},\dotsc ,q_{r}) = \sum_{n,\beta} \DT_{n,\beta}(X)\,
(-p)^{n}\,q_{1}^{\beta \cdot D_{1}} \dotsb q_{r}^{\beta \cdot D_{r}} \, .
\] 

The nano-manifolds $\XtildeN$ are of \emph{banana type} which makes it
possible to compute the DT partition function of $\XtildeN$ for all
fiber curve classes. To be of banana type means that $\XtildeN
\to \PP^{1}$ is the compactification of a group scheme
$\XtildeN^{\circ}\to \PP^{1}$ by banana configurations.

A \emph{banana configuration} in a CY3 is a union of three smoothly
embedded $(-1,-1)$ rational curves $C_{1},C_{2},C_{3} $ whose
intersection consists of two points $\{p,q \}$. Moreover, the group of
the singular fiber where a banana configuration occurs is
$\CC^{*}\times \CC^{*}$ and it acts on the banana configuration and
its formal neighborhood (see \cite{Bryan-Banana}).

\begin{center}

\begin{tikzpicture}[xshift=5cm,
		    scale = 1.0
		    ]

\begin{scope}  
\draw (0,0) ellipse (2.4 and 2);
\draw (0,0) ellipse (0.6cm and 2cm);
\draw (0,0) ellipse (1.2cm and 2cm);
\draw (-0.6,0) arc(180:360:0.6 and 0.3);
\draw[dashed](-0.6,0) arc(180:0:0.6cm and 0.3cm);
\draw (1.2,0) arc(180:360:0.6cm and 0.3cm);
\draw[dashed](1.2,0) arc(180:0:0.6cm and 0.3cm);
\draw (-2.4,0) arc(180:360:0.6cm and 0.3cm);
\draw[dashed](-2.4,0) arc(180:0:0.6cm and 0.3cm);

\draw (0,-2) node[below] {$p$};
\draw(0,2) node[above] {$q$};
\draw(0,0.6) node {$C_{2}$};
\draw(-1.8,0.6) node {$C_{1}$};
\draw(1.8,0.6)node {$C_{3}$};
\end{scope}

\end{tikzpicture}

\end{center}

There is a basis of divisor classes for $\XtildeN$ given by $\Ftilde
,\Deltatilde, \Sprimetilde  ,\Stilde $ where $\Ftilde$ is the class of
the fiber, and $\Deltatilde,\Sprimetilde  ,\Stilde $ are classes which,
when restricted to a generic fiber, span the Picard group and have
intersection form
$
\left(
\begin{smallmatrix}
-2N^{2}&0&0\\
0&0&N\\
0&N&0
\end{smallmatrix}
 \right) $. We assign variables $z,y,q,Q$ to the divisors $\Ftilde
,\Deltatilde, \Sprimetilde ,\Stilde $ respectively and we compute the
DT partition function for fiber curve classes, i.e. we compute the
$z\to 0$ limit of $Z_{\XtildeN}(p,z,y,q,Q)$.

\begin{theorem}\label{thm: ZDT formula} Define positive integers $c(a,m)$  as
the Fourier coefficients of the ratio of theta functions
$\theta_{4}(q^{2} ,y)/\theta _{1}(q^{4} ,y)^{2}$, namely:
\begin{equation}\label{eqn: definition of the coefficientsc(a,m)}
\sum_{a=-1}^{\infty}\sum_{m\in \ZZ} c(a,m) q^{a}y^{m} =
\frac{\sum_{m\in \ZZ}q^{m^{2}}(-y)^{m}}{\left(\sum_{m\in \ZZ +\half}
q^{2m^{2}}(-y)^{m}\right)^{2}}. 
\end{equation}
Then the DT partition function of $\XtildeN$, for fiber
curve classes is given by
\[
Z_{\XtildeN}(p,y,q,Q) = \prod_{k\in \Theta_{N}} \prod_{m,r,s,t}
\left(1-p^{m}q^{Nr/k}Q^{ks}y^{Nt} \right)^{-c(4rs-t^{2},m)}
\]
where in the sum $m,r,s,t$ are integers with $r,s,r+s+t\geq 0$ and if
$r=s=t=0$ then $m>0$, and  $\Theta_{N}$ is the 4-tuple given by
\[
\Theta_{5}=(5,5,1,1),\quad \Theta_{6}=(6,3,2,1),\quad
\Theta_{8}=(4,4,2,2),\quad \Theta_{9}=(3,3,3,3). 
\]
\end{theorem}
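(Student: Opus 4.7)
The plan is to adapt the torus localization strategy of \cite{Bryan-Banana} to the twisted situation of $\XtildeN$. The group scheme $\XtildeN^{\circ}\to \PP^{1}$ acts on $\XtildeN$, and this action extends over the four singular fibers via the local $\CC^{*}\times \CC^{*}$ action on each banana configuration. Applying Behrend-weighted Euler characteristic localization, $Z_{\XtildeN}(p,y,q,Q)|_{z=0}$ reduces to contributions from torus-invariant subschemes, which are set-theoretically supported on the banana configurations in the four singular fibers. Multiplicativity of the Behrend-weighted Euler characteristic over disjoint supports then factors the partition function as a product of four local contributions, one per singular fiber.

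Next, I would identify each local contribution with a rescaled banana vertex. The formal neighborhood of a singular fiber of $\XtildeN$ labeled by $k \in \Theta_{N}$ should be \'etale-locally a cyclic quotient of the standard banana neighborhood appearing in the parent banana manifold, with the cyclic group determined by $N$ and $k$. The classes $\Ftilde, \Deltatilde, \Sprimetilde, \Stilde$ restrict to curves in this local model with pairings scaled by factors involving $k$ and $N/k$; the exponents $q^{Nr/k} Q^{ks} y^{Nt}$ in the formula are forced by the fiber intersection form $\bigl(\begin{smallmatrix}-2N^{2}&0&0\\0&0&N\\0&N&0\end{smallmatrix}\bigr)$ together with the way the quotient twists the lattice pairings on fiber curves. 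Note that the exponent $Nt$ of $y$ is $k$-independent, reflecting that $\Deltatilde$ restricts uniformly across singular fibers, while $\Sprimetilde$ and $\Stilde$ are the two rulings which get interchanged differently at different singular fibers.

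The enumerative content is supplied by the banana vertex computation of \cite{Bryan-Banana}: the local contribution at an unquotiented banana configuration is the product whose logarithm encodes the coefficients $c(a,m)$ via the theta-function ratio in \eqref{eqn: definition of the coefficientsc(a,m)}. After the cyclic quotient, the underlying box-counting on the formal neighborhood of the banana configuration is unchanged; only the pairings with the divisor classes $\Sprimetilde, \Stilde, \Deltatilde$ are rescaled. Hence the local contribution at the $k$-th singular fiber is obtained from the original banana vertex by the substitution described above, and taking the product over $k \in \Theta_{N}$ produces the stated formula.

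The hard part will be rigorously controlling the cyclic quotient at each singular fiber and verifying that the banana vertex survives the quotient with the correct rescaling and no additional correction terms (for instance, from any fixed locus of the cyclic action on ideal sheaves). Matching the 4-tuples $\Theta_{N}$ to the geometry should come from the explicit construction in Theorem \ref{thm: BlGamma(Xcon) is a resolution}: the twelve banana configurations of the parent banana manifold organize into four orbits under the cyclic covering action, and one must check that the orbit stabilizers produce exactly $\Theta_{5}=(5,5,1,1)$, $\Theta_{6}=(6,3,2,1)$, $\Theta_{8}=(4,4,2,2)$, and $\Theta_{9}=(3,3,3,3)$. Once the local-to-global reduction, the quotient banana vertex, and the orbit structure are all in place, the theorem follows by multiplying the four local contributions.
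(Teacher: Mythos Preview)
Your high-level strategy---factor the partition function over the singular fibers, identify each local contribution with the banana vertex of \cite{Bryan-Banana}, and determine the change of variables from intersection theory---matches the paper's approach. However, your description of the geometry is inverted in a way that creates phantom difficulties.

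You write that the formal neighborhood of a singular fiber of $\XtildeN$ is ``\'etale-locally a cyclic quotient of the standard banana neighborhood,'' and worry about whether ``the banana vertex survives the quotient'' with possible correction terms from fixed loci of a cyclic action on ideal sheaves. But the singular fibers of $\XtildeN$ \emph{are} standard banana fibers, not quotients thereof. In the construction $\XtildeN = X_N/G_N$, the fibers $F(k)\subset X_N$ upstairs are multi-banana fibers (proper transforms of $I_k\times I_{N/k}$), and the paper shows in Section~\ref{sec: Divs curves and int numbers on XN} that $\widehat{F}(k)/G_N \cong \widehat{F}_{\ban}$. So the quotient goes from multi-banana upstairs to ordinary banana downstairs, and the result of \cite{Bryan-Banana} applies to $\XtildeN$ directly: each of the four singular fibers contributes exactly $Z_{\widehat{F}_{\ban}}$ up to a change of variables. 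There is no quotient on ideal sheaves to control and no correction terms. Relatedly, there is no ``parent banana manifold'' with twelve banana configurations here, and $\Theta_N$ does not come from any orbit decomposition: it is simply the list of $I_k$ singular-fiber types of the extremal rational elliptic surface $S_N$, each giving one singular fiber of $\XtildeN$.

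The actual work is the change of variables, which the paper gets from the intersection numbers of the banana curves $\atilde(k),\btilde(k),\ctilde(k)$ with the divisors $\Sprimetilde,\Stilde,\Deltatilde$ (Lemma~\ref{lem: curve divisor pairings in the diagonal basis}). Writing $\betatildesubd = d_1\atilde(k)+d_2\btilde(k)+d_3\ctilde(k)$ and computing its pairings gives the substitution $Q_1=q^{N/k}y^{-N}$, $Q_2=Q^{k}y^{-N}$, $Q_3=y^{N}$ in $Z_{\widehat{F}_{\ban}}(p,Q_1,Q_2,Q_3)$; the reindexing $r=d_1$, $s=d_2$, $t=d_3-d_1-d_2$ together with $\dvec^{2}=4rs-t^{2}$ then yields the stated formula. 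Your instinct that the exponents are forced by intersection theory is right, but the mechanism is this explicit computation on $\XtildeN$ rather than any quotient analysis.
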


\begin{remark}
The methods developed in \cite{Bryan-Banana} to compute the fiber
curve class DT partition function of the ordinary banana manifold
applies equally well to the banana nano-manifolds $\XtildeN$. The
primary difference is that in the ordinary banana manifold, the three
banana curves of any singular fiber span the fiber curve classes
whereas the classes of the banana curves in the singular fibers of
$\XtildeN$ are more complicated and vary from singular fiber to
singular fiber. For this reason, it is more convenient to express the
paritition function in terms of the divisor classes and it accounts
for the differences between the formula in this paper versus the one
in \cite{Bryan-Banana}. 
\end{remark}

\subsubsection{GW theory.}\label{subsection: GW theory}
We define the genus $g$ GW potential for fiber curve
classes by
\[
F_{g}^{\XtildeN}(Q,q,y) = \sum_{\begin{smallmatrix} \beta \in H_{2}(\XtildeN)\\
\pi_{*}\beta =0 \end{smallmatrix}
} \left\langle \, \, \right\rangle^{\XtildeN}_{g,\beta}\, \,  Q^{\beta \cdot \Stilde}q^{\beta \cdot \Sprimetilde}y^{\beta \cdot \Deltatilde}
\]
where $ \left\langle \, \, \right\rangle^{\XtildeN}_{g,\beta}$ is the
genus $g$ GW invariant of $\XtildeN$ in the class $\beta$.

The following is a consequence of Theorem~\ref{thm: ZDT formula}:

\begin{corollary}\label{cor: formula for F_g}
The genus $g$ GW potential of $\XtildeN$ is given by
\begin{equation}\label{eqn: GW potential of Xtilde}
F_{g}^{\XtildeN}(Q,q,y) = \sum_{k\in \Theta_{N}} F_{g}^{\ban}(Q^{k},q^{N/k},y^{N})
\end{equation}
where for $g\geq 2$, $F_{g}^{\ban}(Q,q,y)$ is a meromorphic Siegel
modular form of weight $2g-2$ with 
\[
Q=e^{2\pi i\sigma},q=e^{2\pi i\tau}, y=e^{2\pi iz}, \text{ for }
\left(\begin{smallmatrix} \tau &z\\ 
z&\sigma \end{smallmatrix} \right)\in \HH_{2}
\]
where $\HH_{2}$ is the genus 2 Siegel
upper half space. Namely, $F_{g}^{\ban}$ is the Maass lift of the
index 1, weight $2g-2$ Jacobi form 
\[
\alpha_{g}\cdot \phi_{-2,1}(q,y)\cdot E_{2g}(q)
\]
where $E_{2g}(q)$ is the weight $2g$ Eisenstein series, $\phi_{-2,1}(q,y)$
is the unique weak Jacobi form of weight -2 and index 1, and
$\alpha_{g}=\frac{|B_{2g}|}{2g(2g-2)!}$
(c.f. \cite[\S~A.4]{Bryan-Banana} for notation).
\end{corollary}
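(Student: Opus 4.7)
The plan is to deduce the formula directly from the DT formula of Theorem~\ref{thm: ZDT formula} via the GW/DT correspondence, and then invoke the main result of \cite{Bryan-Banana} to identify $F_g^{\ban}$ with the Maass lift.

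First I would observe that the DT partition function of Theorem~\ref{thm: ZDT formula} has a clean product decomposition over $k\in \Theta_N$:
\[
Z_{\XtildeN}(p,y,q,Q)=\prod_{k\in \Theta_N} Z^{(k)}(p,y,q,Q), \qquad
Z^{(k)}=\prod_{m,r,s,t}\bigl(1-p^{m}\, q^{Nr/k}\, Q^{ks}\, y^{Nt}\bigr)^{-c(4rs-t^2,m)}.
\]
Under the substitution $\tilde q = q^{N/k}$, $\tilde Q = Q^{k}$, $\tilde y = y^{N}$, the factor $Z^{(k)}$ becomes precisely the fiber-curve-class DT partition function of the ordinary banana manifold, as computed in \cite{Bryan-Banana}. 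Since the summation conditions $r,s\geq 0$, $r+s+t\geq 0$ are the correct effectivity conditions for fiber curve classes, and the substitution preserves the degree-$0$ part correctly, the factorization is genuine.

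Next I would apply the GW/DT correspondence for CY3s: after normalizing by the degree-zero DT series, taking the logarithm, and substituting $p=-e^{iu}$, the coefficient of $u^{2g-2}$ in the connected DT generating function equals the genus $g$ GW potential. Since $\log$ converts the product decomposition into a sum, and the change of variables $(Q,q,y)\mapsto (Q^{k},q^{N/k},y^{N})$ is compatible with the parameter $u$ (which only tracks $p$), we obtain
\[
F_g^{\XtildeN}(Q,q,y) = \sum_{k\in \Theta_N} F_g^{\ban}\!\bigl(Q^{k},q^{N/k},y^{N}\bigr),
\]
which is equation~\eqref{eqn: GW potential of Xtilde}.

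Finally, the identification of $F_g^{\ban}$ as the Maass lift of $\alpha_g\cdot \phi_{-2,1}(q,y)\cdot E_{2g}(q)$ is exactly the statement proved for the ordinary banana manifold in \cite{Bryan-Banana}, so no new modular work is required here; only the Siegel modularity assertion is inherited factor by factor.

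The main obstacle will be bookkeeping in the GW/DT conversion step: one must verify that the product expansion of $Z^{(k)}$ (with the fractional exponent $Nr/k$ in $q$) lies in the appropriate completed ring so that the logarithm and the change of variables commute, and that the resulting series in $u$ indeed matches the connected GW generating function for fiber classes $\beta$ with $\pi_*\beta=0$. Once this is checked factor by factor — exactly as in \cite{Bryan-Banana} — the corollary follows.
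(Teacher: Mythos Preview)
Your proposal is correct and follows essentially the same route as the paper: factor the DT partition function of Theorem~\ref{thm: ZDT formula} over $k\in\Theta_N$, apply the GW/DT correspondence (so that $\log$ turns the product into a sum), and then quote \cite{Bryan-Banana} for the identification of each summand $F_g^{\ban}$ as the Maass lift of $\alpha_g\,\phi_{-2,1}\,E_{2g}$. The only organizational difference is that the paper takes the logarithm of the full product first and carries out the polylogarithm expansion explicitly before recognizing each $k$-summand as $F_g^{\ban}(Q^k,q^{N/k},y^N)$, whereas you recognize the substitution $(\tilde Q,\tilde q,\tilde y)=(Q^k,q^{N/k},y^N)$ at the DT level and then invoke the banana computation as a black box; both are equivalent, and your bookkeeping remark about the degree-zero part is handled in the paper by passing to the reduced potential $F'_g$ and restoring the constant term at the end.
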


We show that (up to a change of variables) the GW potentials $F_{g}^{\XtildeN}$ are Siegel modular forms for a certain subgroup of $\Sp_{4}(\RR)$ as follows. For $N \in \{5, 6, 8, 9\}$ consider the group
\begin{equation}\label{eqn: defn of subgroup PN}
P_{N} = \Sp_{4}(\QQ) \cap
\begin{pmatrix}
\ZZ & \tfrac{N}{d_{N}} \, \ZZ & \tfrac{1}{d_{N}} \, \ZZ & \tfrac{1}{N} \, \ZZ \\[5pt]
\frac{1}{d_{N}} \, \ZZ & \ZZ & \frac{1}{N} \, \ZZ & \frac{1}{N d_{N}} \, \ZZ \\[5pt]
\frac{N}{d_{N}} \, \ZZ  & N \, \ZZ  & \ZZ & \frac{1}{d_{N}} \, \ZZ  \\[5pt]
N \, \ZZ & \frac{N^{2}}{d_{N}} \, \ZZ & \frac{N}{d_{N}} \, \ZZ & \ZZ 
\end{pmatrix}
\end{equation}
where $d_{N} = \gcd \Theta_{N}$. Explicitly, $d_{5} = 1$, $d_{6}=1$,
$d_{8} = 2$, and $d_{9}=3$. We additionally consider the involution  
\begin{equation}\label{eqn: defn of involution iN}
\iota_{N} = 
\begin{pmatrix}
0 & \sqrt{N} & 0 & 0 \\
\frac{1}{\sqrt{N}} & 0 & 0 & 0 \\
0  & 0  & 0 & \frac{1}{\sqrt{N}}  \\
0 & 0 & \sqrt{N} & 0 
\end{pmatrix}
\in \Sp_{4}(\RR)
\end{equation}
and observe that conjugation by $\iota_{N}$ preserves $P_{N}$. We can
therefore produce an index $2$ normal extension of $P_{N}$ 
\[
P^{*}_{N} = P_{N} \cup \iota_{N} P_{N}
\]
and prove the following
\begin{corollary}\label{cor: GW potential of Xtilde is a paramodular form}
Up to a change of variables (see Section \ref{subsection: GW
potentials of XtildeN}), the GW potentials $F_{g}^{\XtildeN}$ for $g
\geq 2$ are meromorphic Siegel modular forms of weight $2g-2$ for the
discrete subgroup $P^{*}_{N} \subset \Sp_{4}(\RR)$.  
\end{corollary}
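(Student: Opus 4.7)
The plan is to combine the decomposition from Corollary~\ref{cor: formula for F_g} with the $\Sp_{4}(\ZZ)$-modularity of $F_{g}^{\ban}$, and then verify invariance under a generating set of $P^{*}_{N}$ summand-by-summand. Since $F_{g}^{\ban}$ is the Maass lift of an index-$1$, weight-$(2g-2)$ Jacobi form, it is a meromorphic Siegel modular form of weight $2g-2$ for $\Sp_{4}(\ZZ)$ on $\HH_{2}$. The change of variables described in Section~\ref{subsection: GW potentials of XtildeN} rescales the period coordinates so that each summand $F_{g}^{\ban}(Q^{k},q^{N/k},y^{N})$ in Corollary~\ref{cor: formula for F_g} becomes the pullback of $F_{g}^{\ban}$ along an explicit $\RR$-linear scaling $\phi_{k}:\HH_{2}\to \HH_{2}$, chosen so that the indexing by $k\in \Theta_{N}$ is symmetric under the involution $k\leftrightarrow N/k$.

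For invariance under $P_{N}$, the strategy is to show that for each generator $\gamma\in P_{N}$ and each $k\in \Theta_{N}$ there exist $k'\in \Theta_{N}$ and $\widetilde{\gamma}\in \Sp_{4}(\ZZ)$ with
\[
\phi_{k}\circ \gamma \;=\; \widetilde{\gamma}\circ \phi_{k'}
\]
as elements of $\Sp_{4}(\RR)$, and that the corresponding automorphy factors of weight $2g-2$ assemble compatibly across the summands. Invariance of $F_{g}^{\XtildeN}$ then follows from the $\widetilde{\gamma}$-invariance of each $F_{g}^{\ban}$-factor together with the permutation $k\mapsto k'$ of the summation. The fractional-ideal structure in the definition of $P_{N}$, with denominators involving $N$ and $d_{N}=\gcd \Theta_{N}$, is precisely what is required for these conjugations $\phi_{k}\circ \gamma\circ \phi_{k'}^{-1}$ to yield integer-entry symplectic matrices uniformly across the four summands. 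In practice, it suffices to check a minimal generating set of $P_{N}$: the Heisenberg-type translations in the $z$-direction, the two $SL_{2}$-subgroups acting on the $\tau$- and $\sigma$-planes, and a single generator encoding the off-integer entries.

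For the extension to $P^{*}_{N}$, a direct computation shows that $\iota_{N}$ acts on $\HH_{2}$ by $(\tau,z,\sigma)\mapsto (N\sigma,z,\tau/N)$. Composing with the $\tau\leftrightarrow \sigma$ symmetry of $F_{g}^{\ban}$ (the restriction of $\Sp_{4}(\ZZ)$-invariance to the block-swapping involution) sends the $k$-summand to the $(N/k)$-summand, up to an element of $\Sp_{4}(\ZZ)$ with trivial modular contribution. Since $k\mapsto N/k$ is an involution of the multiset $\Theta_{N}$ for each $N\in \{5,6,8,9\}$ by inspection, $\iota_{N}$ permutes the summands and $F_{g}^{\XtildeN}$ is $\iota_{N}$-invariant. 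The main obstacle is the explicit bookkeeping verifying the conjugations $\phi_{k}\circ \gamma\circ \phi_{k'}^{-1}\in \Sp_{4}(\ZZ)$ with matching automorphy factors: the design of $P_{N}$ is tailored so that this succeeds, but carrying out the check requires a judicious choice of generators whose $\phi_{k}$-conjugates are either independent of $k$ or permute the values of $k$ transitively across $\Theta_{N}$.
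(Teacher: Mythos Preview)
Your overall architecture matches the paper's: decompose via Corollary~\ref{cor: formula for F_g}, use the $\Sp_{4}(\ZZ)$-modularity of $F_{g}^{\ban}$, and handle $\iota_{N}$ by the involution $k\leftrightarrow N/k$ on $\Theta_{N}$. The $\iota_{N}$ argument you give is essentially identical to the paper's.

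Where you diverge is in the treatment of $P_{N}$-invariance, and here you are working harder than necessary. You propose to check a generating set of $P_{N}$ and, for each generator $\gamma$ and each $k$, find a possibly different $k'$ with $\phi_{k}\circ\gamma=\widetilde{\gamma}\circ\phi_{k'}$ for some $\widetilde{\gamma}\in\Sp_{4}(\ZZ)$, then track that the automorphy factors match across the resulting permutation of summands. The paper avoids all of this bookkeeping with a single observation: writing $g_{k}=\diag(N,Nk,k,1)$ so that $F_{g}^{\ban}(g_{k}\cdot\Omega)=F_{g}^{\ban}(Q^{Nk},q^{N/k},y^{N})$, one has that $F_{g}^{\ban}(g_{k}\cdot\Omega)$ is automatically a weight-$(2g-2)$ Siegel modular form for the conjugate subgroup $L_{N,k}=\Sp_{4}(\QQ)\cap g_{k}^{-1}\Sp_{4}(\ZZ)g_{k}$. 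A direct computation of $L_{N,k}$ then shows that $P_{N}=\bigcap_{k\in\Theta_{N}}L_{N,k}$. Hence \emph{each summand separately} is $P_{N}$-modular, and the sum is trivially so; no permutation $k\mapsto k'$ and no generator-by-generator analysis is required. In your framework this amounts to the fact that $k'=k$ always works for $\gamma\in P_{N}$, which also makes the automorphy-factor compatibility automatic. Your allowance for nontrivial permutations inside $P_{N}$ is a red herring that, if pursued, would force you to control the $k$-dependence of the factors $j(\widetilde{\gamma}_{k},\phi_{k'}\cdot\Omega)$ --- unnecessary once you recognize $P_{N}$ as the intersection of the $L_{N,k}$.
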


The involution $\iota_{N}$ induces the transformation $(Q, q, y)
\mapsto (q^{\frac{1}{N}}, Q^{N}, y)$, which is a symmetry of
$F_{g}^{\XtildeN}$ after the change of variables. We note that this is
reminiscent of the behaviour of \emph{Siegel paramodular forms} where
in particular, invariance under $(Q, q, y) \mapsto (q^{\frac{1}{N}},
Q^{N}, y)$ is a key property \cite[\S~3.1]{Gritsenko99}.

%
%
%

\subsubsection{GV theory.}
The genus $g$, curve class $\beta$ GV invariant of a CY3 $X$ is an
integer valued curve counting invariant $n^{g}_{\beta}(X)$ which has
been given a geometric definition by Maulik and Toda
\cite{Maulik-Toda-Inventiones-2018}. The GV invariants are
conjecturally equivalent to the DT/GW invariants by the Gopakumar-Vafa
formula which can alternatively be used to give a (non-geometric)
definition. Either by assuming the Gopakumar-Vafa formula holds for
$\XtildeN$, or by using the non-geometric definition, we may use our
computation of the DT partition function to compute the (fiber class)
GV invariants of $\XtildeN$.

The curve classes of a smooth fiber of $\XtildeN \to \PP^{1}$ generate
(over $\QQ$) the fiber curve classes of $\XtildeN$. Consequently, the
fiber curve classes inherit a quadratic form $||\cdot ||$ coming from
the intersection pairing on a smooth fiber.

\begin{proposition}\label{prop: formula for GV invariants of XN}
Assuming that the Gopakumar-Vafa formula holds for $\XtildeN$, the
Gopakumar-Vafa invariants of $\XtildeN$ in an effective fiber class
$\beta$ with $2||\beta ||=a$, are given by
\[
n^{g}_{\beta}(\XtildeN ) = \epsilon_{N}(\beta
)\, n^{g}_{a}
\]
where
\[
\epsilon_{N}(\beta ) =\sum_{k\in \Theta_{N}} \epsilon_{N,k}(\beta
),\quad \quad \epsilon_{N,k}(\beta ) = \begin{cases}
1&\text{if $k\divides (\beta \cdot \Stilde)$ and $\frac{N}{k}\divides (\beta \cdot \Sprimetilde)$}\\
0&\text{otherwise,}
\end{cases}
\]
and the integers $n^{g}_{a}$ are given by
\[
\sum_{a=-1}^{\infty}\sum_{g=0}^{\infty} n_{a}^{g}\,
(y^{\half}+y^{-\half})^{2g}q^{a+1} =  \prod_{n=1}^{\infty}
\frac{(1+yq^{2n-1})(1+y^{-1}q^{2n-1})
(1-q^{2n})}{(1+yq^{4n})^{2}(1+y^{-1}q^{4n})^{2}(1-q^{4n})^{2} }.
\]
\end{proposition}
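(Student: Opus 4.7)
The plan is to extract the Gopakumar--Vafa invariants directly from the DT partition function of Theorem~\ref{thm: ZDT formula} by inverting the Gopakumar--Vafa product formula term-by-term. The starting observation is that the formula factors as a product over $k \in \Theta_N$, and each factor is obtained from the DT partition function of the ordinary banana threefold computed in \cite{Bryan-Banana} by the substitution $q \mapsto q^{N/k}$, $Q \mapsto Q^{k}$, $y \mapsto y^{N}$.

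First I would recall the Gopakumar--Vafa formula, which expresses $\log Z_{X}$ as a sum indexed by effective curve classes with coefficients built from the $n^{g}_{\beta}$ and Laurent polynomials in $(-p)^{1/2}$ and $(y^{1/2}+y^{-1/2})^{2g}$. Applied to the ordinary banana threefold, this inversion is precisely the identity already established in \cite{Bryan-Banana} and produces the integers $n^{g}_{a}$ whose generating function is the infinite product stated in the proposition. This step effectively reduces the problem for $\XtildeN$ to bookkeeping.

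Next, since $\log$ turns the product over $k \in \Theta_{N}$ into a sum and the Gopakumar--Vafa inversion is linear in $\log Z$, the GV invariants of $\XtildeN$ decompose as a sum over $k \in \Theta_N$ of contributions pulled back from the banana threefold via the rescaling above. The $k$-th rescaled factor only produces monomials of the form $p^{m} q^{Nr/k} Q^{ks} y^{Nt}$, so it can contribute to a class $\beta$ only when $\beta \cdot \Stilde = ks$ and $\beta \cdot \Sprimetilde = Nr/k$ for nonnegative integers $r,s$; this is exactly the divisibility condition encoded by $\epsilon_{N,k}(\beta)$. When it does contribute, the contribution is the banana invariant $n^{g}_{a}$ with $a = 4rs - t^{2}$, where $t = (\beta \cdot \Deltatilde)/N$. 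A direct computation using the fiber intersection pairing given in the introduction shows that $4rs - t^{2} = 2\|\beta\|$, yielding $n^{g}_{\beta}(\XtildeN) = \epsilon_{N}(\beta)\, n^{g}_{a}$.

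The main obstacle is verifying the intersection-form identity $4rs - t^{2} = 2\|\beta\|$ uniformly across all $k \in \Theta_{N}$ and confirming that the contributions from distinct $k$ with overlapping support genuinely add (rather than interact multiplicatively) under the Gopakumar--Vafa inversion. Both points are handled by the linearity of the logarithmic GV formula and by a careful matching of exponents through the quadratic form on fiber curve classes; once these are pinned down, the remaining identification with $\epsilon_{N}(\beta) = \sum_{k \in \Theta_{N}} \epsilon_{N,k}(\beta)$ is immediate.
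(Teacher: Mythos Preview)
Your proposal is correct and follows essentially the same approach as the paper: decompose into contributions indexed by $k\in\Theta_N$, identify each with the banana GV invariants $n^g_a$ from \cite{Bryan-Banana}, verify $4rs-t^2=2\|\beta\|$ via the fiber intersection form, and translate the monomial support condition into the divisibility criterion for $\epsilon_{N,k}(\beta)$. The only cosmetic difference is that the paper phrases the decomposition geometrically (as the sum of local GV contributions from each banana fiber $\Fhat(k)/G_N\cong\Fhat_{\ban}$, with $\epsilon_N(\beta)$ counting the fibers on which $\beta$ is integrally representable) rather than algebraically via $\log Z$, but the two are interchangeable.
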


We remark that in the usual banana manifold, the GV invariants in an
effective fiber class $\beta$ only depend on the quadratic form induced by
the generic fiber whereas for $\XtildeN$, they also depend mildly on
divisibility conditions through the number $\epsilon_{N}(\beta )\in
\{1,2,3,4 \}$. 

\subsection{Arithmetic}

Let $X$ be a rigid CY3 defined over $\QQ$. Then there is a continuous
system of two dimensional representations of
$G_{\QQ}=\operatorname{Gal}(\overline{\QQ},\QQ )$ on
$H_{\et}^{3}(X_{\overline{\QQ}},\QQ_{l})$. It was shown by
Gouv\^{e}a-Yui \cite{Gouvea-Yui-rigidCY3s-are-modular} and Dieulefait
\cite{Dieulefait-Modularity-of-CY3s} using the proof of Serre's
conjectures by Khare-Wintenberger
\cite{Khare-WintenbergerI,Khare-WintenbergerII} that this system of
Galois representations is \emph{modular}. In particular, there exists a
weight 4 modular cusp form
\[
f_{X}(q) = \sum_{n=1}^{\infty} a_{n}q^{n}
\] 
uniquely characterized by the condition that 
\begin{equation}\label{eqn: ap=trac(Frobp)}
a_{p} =\Tr (\Frob_{p} \divides H^{3}_{\et}(X_{\overline{\FF}_{p}},\QQ_{l}))
\end{equation}
for almost all primes $p$. Moreover, $\Tr (\Frob_{p})$ can be
determined by the Lefschetz fixed point formula in terms of $\#
X_{\FF_{p}}$ and so the arithmetic of counting points in $X$ over
$\FF_{p}$ gives rise to a modular form $f_{X}(q)$\footnote{Modularity
for rigid CY3s defined over $\QQ$ is
analogous to the famous modularity theorem of Wiles-Taylor for
elliptic curves over $\QQ$. In that case the associated modular form
$f_{E}(q)$ is weight 2. }.

\begin{theorem}\label{thm: theorem on weight four cusp forms}
The rigid CY3 $\XtildeN$ is defined over $\QQ$. The corresponding
modular form $f_{\XtildeN}(q)$ is the unique weight $4$ cusp form on
the congruence subgroup $\Gamma_{0}(N)$. It can be expressed as the
eta product
\[
f_{\XtildeN}(q) = \prod_{k \in \Theta_{N}} \eta(q^{k})^{2}
\]
where $\eta(q) = q^{1/24} \prod_{n \geq 1}(1-q^{n})$ is the Dedekind
eta function, and $\Theta_{N}$ is as in Theorem \ref{thm: ZDT
formula}.
\end{theorem}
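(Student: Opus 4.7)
The plan is to combine the Gouv\^ea--Yui/Dieulefait modularity theorem already invoked in the paper with a dimension count in $S_{4}(\Gamma_{0}(N))$ and the classical theory of eta quotients. First I would verify that the construction of $\XtildeN$ given earlier in the paper (the Abelian surface group scheme $\XtildeN^{\circ}\to\PP^{1}$ together with its compactification by banana configurations) is defined by rational data and hence descends to a smooth projective variety over $\QQ$. Given such a $\QQ$-model, Gouv\^ea--Yui/Dieulefait (via Khare--Wintenberger) immediately produce a unique weight $4$ cuspidal newform $f_{\XtildeN}$ of some level $M$ characterized by \eqref{eqn: ap=trac(Frobp)}.

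Next I would pin down the form using dimensions. A classical computation gives $\dim S_{4}(\Gamma_{0}(N)) = 1$ for each $N\in\{5,6,8,9\}$, while $S_{4}(\Gamma_{0}(d)) = 0$ for every proper divisor $d$ of $N$. So once I show that $M \mid N$, the form $f_{\XtildeN}$ must be the unique normalized generator of $S_{4}(\Gamma_{0}(N))$. To bound $M$, I would study the integral model of $\XtildeN$: the construction should produce a proper model over $\Spec\ZZ[1/N]$ with smooth generic fiber, since the four singular fibers of $\XtildeN\to\PP^{1}$ and their banana configurations degenerate only modulo primes dividing $N$. This forces the conductor of the Galois representation on $H^{3}_{\et}(\XtildeN_{\overline{\QQ}},\QQ_{\ell})$ to be supported on primes dividing $N$, and together with the vanishing of $S_{4}(\Gamma_{0}(d))$ for $d \mid N$ proper it yields $M = N$. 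Finally I would apply the classical Ligozat criteria to check that $\prod_{k\in\Theta_{N}}\eta(q^{k})^{2}$ is a weight $4$ cusp form on $\Gamma_{0}(N)$: the total weight is $\frac{1}{2}(2+2+2+2)=4$, the two congruence conditions $\sum k\, r_{k}\equiv 0$ and $\sum (N/k)\,r_{k}\equiv 0 \pmod{24}$ are readily checked for each $\Theta_{N}$, the square condition $\prod k^{r_{k}}\in\QQ^{\times 2}$ gives trivial character, and the cusp orders are positive. Uniqueness in the one-dimensional space $S_{4}(\Gamma_{0}(N))$ then identifies $f_{\XtildeN}$ with this eta product.

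The main obstacle is controlling the conductor: producing a good integral model of $\XtildeN$ and bounding local conductor exponents at the primes dividing $N$ requires care, especially since the four banana fibers need not be symmetric in $N$. A pragmatic alternative, consistent with the strategy used for other rigid Calabi--Yau threefolds in the literature, is to compute $\Tr(\Frob_{p}\mid H^{3}_{\et})$ for a handful of small primes $p\nmid N$ directly by point counting on an explicit $\FF_{p}$-reduction of $\XtildeN$ (the contributions of $H^{0},H^{2},H^{4},H^{6}$ are read off from the Picard structure recorded in Theorem~\ref{thm: XtildeN exists}), match with the Fourier coefficients of the candidate eta product, and then conclude by strong multiplicity one. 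Either route reduces the theorem to an explicit finite check.
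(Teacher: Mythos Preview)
Your approach is genuinely different from the paper's. The paper does \emph{not} attempt to bound the conductor of $\XtildeN$ or do any point counting on it. Instead it reduces to a prior computation of Verrill, who had already shown that any projective conifold resolution $X_{N}^{\Ver}$ of the self-fiber-product $S_{N}\times_{\PP^{1}}S_{N}$ is rigid with $f_{X_{N}^{\Ver}}(q)=\prod_{k\in\Theta_{N}}\eta(q^{k})^{2}$. The paper then builds a chain
\[
X_{N}^{\Ver}\xrightarrow{\pi_{1}}S_{N}\times_{\PP^{1}}S_{N}\xrightarrow{q_{1}}X_{\sing}\xleftarrow{\pi_{2}}X_{N}\xrightarrow{q_{2}}\XtildeN
\]
(crepant resolutions $\pi_{i}$, free $G_{N}$-quotients $q_{i}$), verifies that every map and space is defined over $\QQ$, and shows via an excision/weight-filtration argument that these induce a $\Gal(\overline{\QQ}/\QQ)$-equivariant isomorphism $H^{3}_{\et}(\XtildeN,\QQ_{\ell})\cong H^{3}_{\et}(X_{N}^{\Ver},\QQ_{\ell})$. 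The technical core is that $\pi_{1}^{*}$ and $\pi_{2}^{*}$ are isomorphisms on $\Gr_{3}H^{3}$ because the exceptional loci carry no weight-$3$ cohomology in degrees $2$ and $3$. No new arithmetic input is required beyond Verrill's point counts.

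Your route could in principle work, but the obstacle you flag is real and sharper than you suggest. Good reduction over $\Spec\ZZ[1/N]$ only bounds the \emph{support} of the conductor, not the exponents; for example $\dim S_{4}(\Gamma_{0}(16))>1$ and $\dim S_{4}(\Gamma_{0}(25))>1$, so for $N=8,9,5$ you genuinely need to control the wild part, which is delicate for a threefold built from singular fibers and blow-ups. Your fallback of point counting on $\XtildeN$ would essentially redo Verrill's computation in a more complicated geometry, and your parenthetical that the Frobenius traces on $H^{2},H^{4}$ can be read off from $h^{1,1}=4$ is too quick: you would also need to know that the four generating divisor classes $\Ftilde,\Deltatilde,\Sprimetilde,\Stilde$ are each defined over $\FF_{p}$, not merely over $\overline{\FF}_{p}$. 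The paper's comparison argument sidesteps all of this at the cost of a mixed-Hodge analysis of the intermediate singular varieties and some care in producing $\QQ$-models of the conifold resolutions.
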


The proof of this theorem, given in Appendix~\ref{sec: reduction to
Verills case}, reduces to our case to a previously known case. Namely,
we show that 
\[
f_{\XtildeN}(q)  = f_{X_{N}^{\Ver}}(q)
\]
where $X_{N}^{\Ver}$ are the rigid CY3s studied by Verrill
\cite[Appendix]{Yui-with-Verrill-appendix} who  carried
out the requisite point counts to determine $f_{X_{N}^{\Ver}}(q)$.

The banana nano-manifold $\XtildeN\to \PP^{1}$ has four singular
fibers over points $p_{1},\dotsc ,p_{4}\in \PP^{1}$ and thus there is
an associated elliptic curve $E_{N}$ given by the double branched
cover of $\PP^{1}$ ramified over $p_{1},\dotsc ,p_{4}$. We have
observed the following curious proposition which we will prove in
Appendix~\ref{sec: reduction to Verills case}.

\begin{proposition}\label{prop: f_E(q)^2 = f_XN(q^2)}
There is a model for $E_{N}$ defined over $\QQ$ such that the
associated weight 2 modular form $f_{E_{N}}(q)$ is given by
\[
f_{E_{N}}(q)=  \prod_{k \in \Theta_{N}} \eta(q^{2k})
\]
so that $f_{E_{N}}(q)^{2} = f_{\XtildeN}(q^{2})$.
\end{proposition}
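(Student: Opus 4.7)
The plan is to identify $E_{N}$ as an explicit elliptic curve over $\QQ$ and then read off its modular form from the classical tables of weight $2$ eta-product newforms. Once this is done, the identity $f_{E_{N}}(q)^{2} = f_{\XtildeN}(q^{2})$ follows directly from Theorem~\ref{thm: theorem on weight four cusp forms} by squaring, since
\[
\left(\prod_{k \in \Theta_{N}} \eta(q^{2k})\right)^{2} = \prod_{k \in \Theta_{N}} \eta(q^{2k})^{2} = f_{\XtildeN}(q^{2}).
\]

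First I would locate the four singular fibers of $\XtildeN \to \PP^{1}$ in explicit affine coordinates on the base, either directly from the construction given in the body of the paper or through the identification with Verrill's threefolds $X_{N}^{\Ver}$ used in Appendix~\ref{sec: reduction to Verills case}. The four points $p_{1},\ldots,p_{4}$ form a $\Gal(\overline{\QQ}/\QQ)$-stable configuration on $\PP^{1}$, so the effective divisor $p_{1}+\cdots+p_{4}$ is defined over $\QQ$ even if individual points lie in a quadratic extension. Writing the four points as the roots of a quartic $f(x)\in \QQ[x]$, the double cover is given by $y^{2}=f(x)$, and a standard change of variable together with a rational choice of origin produces a Weierstrass model of $E_{N}$ over $\QQ$.

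Second, with an explicit Weierstrass model I would compute the $j$-invariant and conductor of $E_{N}$. The expectation, consistent with Cremona's tables, is that $E_{N}$ has conductor $4N$ and lies in the isogeny class $20\mathrm{a}$, $24\mathrm{a}$, $32\mathrm{a}$, $36\mathrm{a}$ for $N=5,6,8,9$ respectively, the last two being the CM curves with $j$-invariant $1728$ and $0$. For each of these four elliptic curves the associated weight $2$ newform is classically known to coincide with $\prod_{k\in \Theta_{N}}\eta(q^{2k})$; the relevant identities appear in Martin's tables of multiplicative eta products and, in any event, can be verified by matching Hecke eigenvalues at finitely many primes up to the Sturm bound for $\Gamma_{0}(4N)$. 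Combining this identification with the squaring step above completes the proof.

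The main obstacle is the first step: extracting the four branch points from the construction of $\XtildeN$ in a form where their $\QQ$-structure is manifest, and thereby producing a genuine $\QQ$-model of $E_{N}$ (not merely a $\overline{\QQ}$-model). The appendix's comparison with $X_{N}^{\Ver}$, where the base $\PP^{1}$ carries a natural arithmetic structure coming from the modular interpretation, seems the most efficient route to this; once a defining equation for $E_{N}$ over $\QQ$ is in hand, the remainder of the argument reduces to a short computation of Fourier coefficients and a citation to the theory of multiplicative eta products.
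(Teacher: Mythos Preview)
Your approach is essentially the paper's own: compute the $j$-invariant of $E_{N}$ from the four branch points and then exhibit, via a database lookup, a $\QQ$-model whose associated weight~$2$ newform is the claimed eta product. The paper takes the branch points directly from Verrill's Table~2 (where they are all rational, so your worry about quadratic extensions does not arise), computes the cross-ratio $\lambda$ and hence $j(\lambda)$ via the Legendre formula, and then reads off from LMFDB the models $20\mathrm{a}1$, $24\mathrm{a}3$, $32\mathrm{a}3$, $36\mathrm{a}3$ --- exactly the isogeny classes and $j$-invariants you predicted.
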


There is also elliptic curve associated to any rigid CY3 $X$, namely the
intermediate Jacobian $J(X)$ which is given by
\[
0\to H_{3}(X,\ZZ) \xrightarrow{i} H^{3,0}(X)^{*}\to J(X)\to 0
\]
where the embedding $i$ is given by integrating the Calabi-Yau form
over 3-cycles. This raises the natural 
\begin{question}\label{question: is EXN = J(XN)?}
Is $E_{N}\cong J(\XtildeN )$ ?
\end{question}
More generally, we ask the following
\begin{question}\label{question: when does f_X(q^2) = f_E(q)^2?}
Let $X$ be a rigid CY3 defined over $\QQ$ and let $E=J(X)$ be its
intermediate Jacobian. When does there exist a model for $E$ defined
over $\QQ$ such that the equation
\[
f_{X}(q^{2}) = f_{E}(q)^{2} 
\]
holds?
\end{question}

\subsection{Plan of the paper.}\label{subsec: outline of paper}

In Section~\ref{sec: the construction of XN} we give our construction
of $\XtildeN$ which is given as a free quotient of a crepant
resolution of a certain fiber product of extremal rational elliptic
surfaces. In Section~\ref{sec: local toric geometry} we use toric
geometry to prove some key facts about our construction. In Section
~\ref{sec: Divs curves and int numbers on XN}, we find a basis for
curve classes and for divisors on $\XtildeN$ and compute their
intersection numbers. In Section~\ref{sec: computation of DT/GW/GV of
XN}, we use the techniques of \cite{Bryan-Banana} to compute the DT
invariants of $\XtildeN$ and we use that computation to get the GW and
GV invariants. In Appendix~\ref{sec: reduction to Verills case}, we
prove Theorem~\ref{thm: theorem on weight four cusp forms} by use
\'etale cohomology techniques to reduce our computation to a related
computation done by Verrill \cite{Yui-with-Verrill-appendix}.

\section{The Construction of $\XtildeN$}\label{sec: the construction
of XN}

Let $S_{N}\to \PP^{1}$ be one of the four unique rational elliptic
surfaces having four singular fibers of type
$(I_{k_{1}},I_{k_{2}},I_{k_{3}},I_{k_{4}})$ where $\Theta_{N} =
(k_{1},k_{2},k_{3},k_{4})$ is given below:

\bigskip

\setlength{\tabcolsep}{1.0em} 
{\renewcommand{\arraystretch}{1.2}

\begin{center}

\begin{tabular}{llll}
$N$ & Singularity Type $\Theta_{N}$ & Mordell-Weil Group $G_{N}$ & Modular group $\Gamma_{N}$\\
\hline 
$5$ & $(5,5,1,1)$ & $\ZZ_{5}$     & $\Gamma_{1}(5)$\\
$6$ & $(6,3,2,1)$ & $\ZZ_{6}$  & $\Gamma_{1}(6) $\\
$8$ & $(4,4,2,2)$ & $\ZZ_{4} \times \ZZ_{2}$   & $\Gamma_{1}(4)\cap \Gamma (2) $\\
$9$ & $(3,3,3,3)$ & $\ZZ_{3} \times \ZZ_{3}$  & $\Gamma (3)$\\
\end{tabular}
\end{center}
\bigskip

The surface $S_{N}$ is a compactification of the universal elliptic
curve over the modular curve 
\[
\PP^{1}-\{\text{4 points} \}\cong \HH /\Gamma_{N}
\]
where $\HH$ is the upper half-plane and $\Gamma_{N}\subset SL_{2}(\ZZ
)$ is the congruence subgroup listed above. $S_{N}$ admits an action
of $G_{N}$, the associated Mordell-Weil group of sections listed
above. The modular curve can be regarded as parameterizing elliptic
curves $E$ equipped with an injective homomorphism $G_{N}\to E$. Note
that the order of $G_{N}$ is $N$. $S_{N}$ is an \emph{extremal
elliptic surface}: it admits no deformations preserving the
singularity type
\cite{Miranda-Persson-Extremal,Beauville-Elliptic-surface-with-4-singularfibers}.

We will henceforth often suppress the $N$ subscript of $S_{N}$ and
$G_{N}$. Let
\[
S'_{\sing} = S/G,\quad S'\to S'_{\sing}
\]
be the quotient and its minimal resolution. The action of $G$ on $S$ is
free away from the nodes of the singular fibers and it is easy to see
that the stabilizer of the nodes in an $I_{k}$ fiber is
$\ZZ_{N/k}$. It then follows that $S'$ is isomorphic to $S$ where the
isomorphism is induced from an automorphism of the base $\PP^{1}$
which reverses the order of the singular fibers. Indeed, the quotient
of an $I_{k}$ fiber in $S$ is an irreducible nodal rational curve in
$S'_{\sing}$ with a $\ZZ_{N/k}$ quotient singularity at the node. We
denote this type of fiber by $I_{1}^{\ZZ_{N/k}}$.

Let 
\begin{align*}
X_{\sing} & = S\times_{\PP^{1}} S'_{\sing},\\
X_{\con} & = S\times_{\PP^{1}} S'.
\end{align*}
It follows from the previous analysis that 
\[
X_{\sing}\to \PP^{1} \text{ and } X_{\con} \to \PP^{1} 
\]
are Abelian surface fibrations with four singular fibers of type
\[
I_{k}\times I_{1}^{\ZZ_{N/k}}  \text{ and } I_{k} \times I_{N/k}
\]
respectively for $k\in \Theta_{N}$.

The threefolds $X_{\sing}$ and $X_{\con}$ are singular
CY3s. $X_{\con}$ has $4N$ singularities occurring at the points
\[
n_{i}\times n'_{j}\in I_{k}\times I_{N/k} 
\]
where $n_{i}\in I_{k}$ and $n'_{j}\in I_{N/k}$ are nodes. The
singularities of $X_{\con}$ are all conifold singularities whereas the
singularities of $X_{\sing}$ are more complicated but are locally
hypersurface singularities given by the equation $\{ab=x^{l}y^{l} \}$
where $l=N/k$.

We illustrate $X_{\sing}$ and $X_{\con}$ below in the case of $N=6$:

\bigskip

\begin{center}


\begin{tikzpicture}
[x={({cos(10)},{-sin(10)},0)},z={({-sin(40)},{-cos(40)},-0.3)},scale=0.65]

\pgfmathsetmacro{\rshift}{12}
\pgfmathsetmacro{\ushift}{-3}
\begin{scope}[canvas is xz plane at y=3] 
\draw [thick] (0+\rshift,0+\ushift) rectangle (9+\rshift,3+\ushift);

\draw [ultra thick,black] 
                    (2 +0 +\rshift ,0 +\ushift  ) 
to [out=90,in=0]    (1.3  +0 +\rshift,1.8+\ushift ) 
to [out=180,in=90]  (1  +0 +\rshift ,1.5 +\ushift) 
to [out=270,in=180] (1.3  +0+\rshift ,1.2+\ushift ) 
to [out=0,in=270]   (2   +0 +\rshift ,3 +\ushift  );

\draw [ultra thick,black] 
(1.5  +2 +\rshift,0.7 +\ushift) 
to [out=20,in=290]   (2   +2 +\rshift  ,3+\ushift   );
\draw [ultra thick,black] 
                    (2 +2  +\rshift ,0 +\ushift  ) 
to [out=110,in=210]    (2.5  +2 +\rshift,2.3+\ushift );

\draw [ultra thick,black] (2+4+\rshift,3+\ushift) to[out=-90,in=135] (2.75+4+\rshift,1.2+\ushift);
\draw [ultra thick,black] (2+4+\rshift,0+\ushift) to[out=90,in=-135] (2.75+4+\rshift,1.8+\ushift);
\draw [ultra thick,black] (2.3+4+\rshift,0.5+\ushift) to[out=135,in=-135] (2.3+4+\rshift,2.5+\ushift);

\draw [ultra thick,black] (2+6+\rshift,0+\ushift) to[out=90,in=-20] (1.25+6+\rshift,1.0+\ushift);
\draw [ultra thick,black] (1.4+6+\rshift,0.6+\ushift) to[out=70,in=-70] (1.4+6+\rshift,2.4+\ushift);
\draw [ultra thick,black] (2+6+\rshift,3+\ushift) to[out=-90,in=160] (2.75+6+\rshift,2+\ushift);
\draw [ultra thick,black] (2.6+6+\rshift,0.6+\ushift) to[out=110,in=-110] (2.6+6+\rshift,2.4+\ushift);
\draw [ultra thick,black] (1.7+6+\rshift,0.3+\ushift) to[out=70,in=180] (2.75+6+\rshift,1.0+\ushift);
\draw [ultra thick,black] (2.3+6+\rshift,2.7+\ushift) to[out=-110,in=0] (1.25+6+\rshift,2+\ushift);

\node at (-0.7+\rshift,1.5+\ushift) {$S'$};
\node at (3.5+\rshift,-1.5+\ushift) {$X_{\con}$};
\end{scope}

\begin{scope}[canvas is xy plane at z=3+\ushift] 
\draw [thick] (0+\rshift,0) rectangle (9+\rshift,3);
\draw [ultra thick] (0+\rshift,-2 ) to (9+\rshift,-2 );
\node  at (-0.6+\rshift,-2 ) {$\PP^{1}$};
\draw [->,thick] (4.5+\rshift,-0.5 )--(4.5+\rshift,-1.5 );

\draw [ultra thick,black] (2+\rshift,0 ) to[out=90,in=-20] (1.25+\rshift,1.0 );
\draw [ultra thick,black] (1.4+\rshift,0.6 ) to[out=70,in=-70] (1.4+\rshift,2.4 );
\draw [ultra thick,black] (2+\rshift,3 ) to[out=-90,in=160] (2.75+\rshift,2 );
\draw [ultra thick,black] (2.6+\rshift,0.6 ) to[out=110,in=-110] (2.6+\rshift,2.4 );
\draw [ultra thick,black] (1.7+\rshift,0.3 ) to[out=70,in=180] (2.75+\rshift,1.0 );
\draw [ultra thick,black] (2.3+\rshift,2.7 ) to[out=-110,in=0] (1.25+\rshift,2 );

\draw [ultra thick,black] (2+2+\rshift,3 ) to[out=-90,in=135] (2.75+2+\rshift,1.2 );
\draw [ultra thick,black] (2+2+\rshift,0 ) to[out=90,in=-135] (2.75+2+\rshift,1.8 );
\draw [ultra thick,black] (2.3+2+\rshift,0.5 ) to[out=135,in=-135] (2.3+2+\rshift,2.5 );

\draw [ultra thick,black] 
(1.5  +4 +\rshift,0.7  ) 
to [out=20,in=290]   (2   +4 +\rshift ,3    );
\draw [ultra thick,black] 
                    (2 +4 +\rshift ,0    ) 
to [out=110,in=210]    (2.5  +4+\rshift,2.3  ); 

\draw [ultra thick,black] 
                    (2 +6 +\rshift ,0    ) 
to [out=90,in=0]    (1.3  +6+\rshift ,1.8  ) 
to [out=180,in=90]  (1  +6 +\rshift ,1.5  ) 
to [out=270,in=180] (1.3  +6+\rshift ,1.2  ) 
to [out=0,in=270]   (2   +6 +\rshift ,3   );

\node at (-0.6+\rshift,1.5 ) {$S$};
\end{scope}

\begin{scope}[canvas is yz plane at x=9+\rshift] 
\draw [thick] (0,0+\ushift) rectangle (3,3+\ushift);

\end{scope}

\begin{scope}[canvas is xz plane at y=3] 
\draw [thick] (0,0) rectangle (9,3);


\foreach \x in {0,2,4,6}
\draw [ultra thick,black] 
                    (2 +\x   ,0   ) 
to [out=90,in=0]    (1.3  +\x ,1.8 ) 
to [out=180,in=90]  (1  +\x   ,1.5 ) 
to [out=270,in=180] (1.3  +\x ,1.2 ) 
to [out=0,in=270]   (2   +\x  ,3   );

\foreach \x\y in {2/2,4/3,6/6}
{\node at (2.4+\x ,1.6) {$\scriptstyle{B\ZZ_{\y}}$};
\draw [fill] (1.8+\x,1.5) circle [radius=0.1];}
\node at (-0.7,1.5) {$S'_{\sing}$};
\node at (3.5,-1.5) {$X_{\sing}$};
\end{scope}

\begin{scope}[canvas is xy plane at z=3] 
\draw [thick] (0,0) rectangle (9,3);
\draw [ultra thick] (0,-2) to (9,-2);
\node  at (-0.6,-2) {$\PP^{1}$};
\draw [->,thick] (4.5,-0.5)--(4.5,-1.5);


\draw [ultra thick,black] (2,0) to[out=90,in=-20] (1.25,1.0);
\draw [ultra thick,black] (1.4,0.6) to[out=70,in=-70] (1.4,2.4);
\draw [ultra thick,black] (2,3) to[out=-90,in=160] (2.75,2);
\draw [ultra thick,black] (2.6,0.6) to[out=110,in=-110] (2.6,2.4);
\draw [ultra thick,black] (1.7,0.3) to[out=70,in=180] (2.75,1.0);
\draw [ultra thick,black] (2.3,2.7) to[out=-110,in=0] (1.25,2);

\draw [ultra thick,black] (2+2,3) to[out=-90,in=135] (2.75+2,1.2);
\draw [ultra thick,black] (2+2,0) to[out=90,in=-135] (2.75+2,1.8);
\draw [ultra thick,black] (2.3+2,0.5) to[out=135,in=-135] (2.3+2,2.5);

\draw [ultra thick,black] 
(1.5  +4 ,0.7 ) 
to [out=20,in=290]   (2   +4  ,3   );
\draw [ultra thick,black] 
                    (2 +4  ,0   ) 
to [out=110,in=210]    (2.5  +4,2.3 ); 

\draw [ultra thick,black] 
                    (2 +6  ,0   ) 
to [out=90,in=0]    (1.3  +6 ,1.8 ) 
to [out=180,in=90]  (1  +6  ,1.5 ) 
to [out=270,in=180] (1.3  +6 ,1.2 ) 
to [out=0,in=270]   (2   +6  ,3   );

\node at (-0.6,1.5) {$S$};
\end{scope}

\begin{scope}[canvas is yz plane at x=9] 
\draw [thick] (0,0) rectangle (3,3);

\end{scope}

\end{tikzpicture}

\end{center}
\bigskip


It follows from the previous analysis of the stabilizers of $G$ that
the diagonal copy of $G$ in $G\times G$ acts freely on
$S\times_{\PP^{1}}S'$, transitively permuting the $N$ conifold points
in each singular fiber.

One of our main technical results is that there is a projective
conifold resolution of $X_{\con}$ given by blowing up an explicit
(non-Cartier) Weil divisor passing through all the conifold
points. Namely, let
\[
\Gamma \subset X_{\con}
\]
be the proper transform of $\gamma \subset X_{\sing}$ where $\gamma$
is the graph of the quotient map
\[
f:S\to S'_{\sing} .
\]

\begin{theorem}\label{thm: BlGamma(Xcon) is a resolution}
Let $\Gamma \subset X_{\con}$ be as above and let
\[
X_{N} = \Bl_{\Gamma}(X_{\con})
\]
be the blow-up of $X_{\con}$ along $\Gamma$. Then $X_{N}$ is a
non-singular CY3 with $h^{2,1}(X_{N})=0$ and
$h^{1,1}(X_{N})=4N$. Moreover the quotient of $X_{N}$ by the diagonal
copy of $G_{N}$
\[
\XtildeN =X_{N}/G_{N}
\]
is a CY3 with $h^{2,1}(\XtildeN )=0$ and $h^{1,1}(\XtildeN )=4$. 
\end{theorem}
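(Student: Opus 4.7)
My plan is to separate the theorem into three parts: (i) $X_N=\Bl_{\Gamma}(X_{\con})$ is a smooth CY3, (ii) its Hodge numbers are $h^{1,1}=4N$ and $h^{2,1}=0$, and (iii) the diagonal $G_N$-action lifts to a free action on $X_N$ whose quotient $\XtildeN$ is a smooth CY3 with Hodge numbers $(4,0)$.

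For (i), the assertion is local near each of the $4N$ conifold points of $X_{\con}$. Using the local toric models developed in Section~\ref{sec: local toric geometry}, I would identify an étale neighborhood of each conifold point with the standard toric node $\{ab=cd\}\subset\CC^{4}$ and verify that the proper transform $\Gamma$ of the graph $\gamma$ of $f\colon S\to S'_{\sing}$ meets this neighborhood in one of the two toric Weil divisors (say $\{a=c=0\}$) realizing a small resolution of the conifold. Blowing up such a divisor is then the standard small resolution, producing a single exceptional $\PP^{1}$ over each node and leaving the smooth locus of $X_{\con}$ untouched. Smoothness of $X_N$ follows, and smallness implies $K_{X_N}\cong\OO_{X_N}$ (inherited from $X_{\con}$, whose dualizing sheaf is trivial as a fiber product of the canonically-trivial surfaces $S$ and $S'$ over $\PP^{1}$) and $H^{1}(X_N,\CC)=0$ (inherited from the simply connected surfaces $S$ and $S'$).

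For (ii), I combine an Euler characteristic computation with a rigidity argument. Since the generic Abelian fiber has $\chi=0$, the fibration $X_{\con}\to\PP^{1}$ gives
\[
\chi(X_{\con})=\sum_{k\in\Theta_{N}}\chi(I_{k}\times I_{N/k})=\sum_{k\in\Theta_{N}}k\cdot(N/k)=4N,
\]
and each of the $4N$ small resolutions replaces a point by a $\PP^{1}$, raising $\chi$ by $1$ apiece, so $\chi(X_N)=8N$. For $h^{2,1}(X_N)=0$, I would use the rigidity of the construction: the extremal elliptic surface $S_N$ admits no deformation preserving its configuration of singular fibers \cite{Miranda-Persson-Extremal,Beauville-Elliptic-surface-with-4-singularfibers}, and $S'$ is determined by $S$, so any complex deformation of $X_N$ descends to compatible deformations of $S$ and $S'$ over a deformation of $\PP^{1}$ and must be trivial. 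Combined with the CY3 relation $\chi=2(h^{1,1}-h^{2,1})$ this forces $h^{1,1}(X_N)=4N$.

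For (iii), the analysis of $G_N$-stabilizers on $S$ already given before the theorem shows that the diagonal $G_N$-action on $S\times_{\PP^{1}}S'$ is free on the smooth locus and acts simply transitively on the $N$ conifold points in each singular fiber. Since $\gamma$ is the graph of an equivariant morphism, $\Gamma$ is $G_N$-invariant, so the action lifts to $X_N$ and remains free; hence $\XtildeN=X_N/G_N$ is smooth, and the CY3 structure descends. Taking $G_N$-invariants gives $h^{2,1}(\XtildeN)=h^{2,1}(X_N)^{G_N}=0$, while $\chi(\XtildeN)=\chi(X_N)/N=8$ then forces $h^{1,1}(\XtildeN)=4$; an explicit basis $\Ftilde,\Deltatilde,\Sprimetilde,\Stilde$ of $H^{1,1}(\XtildeN)$ will be produced in Section~\ref{sec: Divs curves and int numbers on XN}. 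The main obstacle is the toric identification in (i): once one knows $\Gamma$ coincides locally at each node with one of the two Weil-divisor rulings, everything else is either standard small-resolution yoga, a bookkeeping Euler-characteristic count, or the (well-known) rigidity of the extremal surfaces $S_N$.
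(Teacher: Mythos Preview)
Your overall structure matches the paper's: defer smoothness to a local analysis, then compute Hodge numbers via rigidity plus an Euler characteristic count, then pass to the free quotient. The Euler characteristic and quotient steps are essentially identical to the paper's.

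The real gap is in (i). Your key intermediate claim---that near each conifold point $\Gamma$ \emph{coincides} with one of the two toric Weil-divisor rulings of $\{ab=cd\}$---is almost certainly false when $l=N/k>1$. The paper remarks that $\Gamma$ is ``necessarily singular and non-normal'' and that its intersection with the exceptional curves of $X_{\con}^{\loc}\to X_{\sing}^{\loc}$ is ``complicated''; a smooth toric ruling would have neither feature. The difficulty is that $\Gamma$ is defined only as a \emph{proper transform} of the graph $\gamma\subset X_{\sing}$, i.e.\ as a closure, and for $l>1$ there are $l$ conifold points lying over a single singular point of $X_{\sing}$, so one cannot simply read off local equations for $\Gamma$ at each of them. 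The paper's substitute for your direct identification is an interpolation: it exhibits a one-parameter family of Weil divisors $D_\epsilon\subset X_{\sing}^{\loc}$ with $D_1=\gamma$ and $D_0$ torus-invariant, takes proper transforms $D'_\epsilon\subset X_{\con}^{\loc}$, verifies by explicit toric combinatorics that $\Bl_{D'_0}(X_{\con}^{\loc})$ is the smooth conifold resolution, and then uses functoriality of blowup in a flat family to conclude the same for $\epsilon=1$. This deformation trick is the technical heart of the argument and is absent from your proposal.

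There is also a smaller gap in your rigidity argument for (ii). You assert that deformations of $X_N$ descend to deformations of $S$ and $S'$ and are therefore trivial, but this skips two points. First, one needs the fact (which the paper cites from \cite{Morrison-LookingGlass}) that deformations of a small resolution $X_N\to X_{\con}$ are exactly the deformations of $X_{\con}$ that smooth none of the nodes. Second, by \cite{Schoen-88} the deformations of $X_{\con}=S\times_{\PP^1}S'$ include not only deformations of $S$ and $S'$ but also the three-parameter family of M\"obius transformations of the base; these are nontrivial deformations of $X_{\con}$, and one must observe that each of them smooths at least one conifold point and hence does not lift to $X_N$.
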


\begin{proof}
The most difficult part of the proof of the above theorem is showing
that blowing up $\Gamma$ yields a conifold resolution. We defer that
to the next section.

Assuming the conifold resolution exists, we compute the Hodge numbers
as follows. First we show that $X_{N}$ is rigid. For any CY3 given as
a conifold resolution $Z\to Z_{\con}$, the deformations of $Z$ are
given by the deformations of $Z_{\con}$ which do not smooth any of the
singularities \cite[\S~3.1]{Morrison-LookingGlass}. In the case of
$X_{\con}=S\times_{\PP^{1}}S'$, all deformations arise from
deformations of $S$, $S'$, or from composing the map $S\to \PP^{1}$
with a Mobi\"us transformation of the base \cite{Schoen-88}. But since
$S$ and $S'$ are extremal elliptic surfaces, any such deformation
results in smoothing one or more of the conifold
singularities. Therefore, $X_{N}$ is rigid and $h^{2,1}(X_{N})=0$.

The topological Euler characteristic of $X_{\con}$ can be computed as
follows. Since the generic fibers of $X_{\con}\to \PP^{1}$ are Abelian
surfaces and have Euler characteristic 0, the Euler characteristic of
$X_{\con}$ is equal to the sum of the Euler characteristics of the
singular fibers. Then since the singular fibers admit $\CC^{*}\times
\CC^{*}$ action whose only fixed points are the singularities, we find
\begin{align*}
e(X_{\con}) &= \text{number of conifold singularities }\\
&= 4N.
\end{align*}
The conifold resolution $X_{N}\to X_{\con}$ replaces each conifold
singularity with a $\PP^{1}$ and so we find that $e(X_{N})=8N$. Then
since for any CY3 $X$, $e(X) = 2h^{1,1}(X)-2h^{2,1}(X)$ we see that
$h^{1,1}(X_{N})=4N$ as was asserted. Finally since $G_{N}$ acts
freely, the Euler characteristic of $\XtildeN$ is 8 and subsequently,
$h^{1,1}(\XtildeN )=4$.

\end{proof}

\subsection{Schoen nano-manifolds}
We briefly digress to discuss some nano-manifolds closely related to
$\XtildeN$. Consider
\[
X^{\Sch}_{N} = S_{N} \times_{\phi} S'_{N}
\]
where the notation means that we take the fiber product of $S_{N}\to
\PP^{1}$ with the composition $S'_{N}\to \PP^{1}\xrightarrow{\phi
}\PP^{1}$ where $\phi:\PP^{1}\to \PP^{1}$ is a generic M\"obius
transformation.

The fiber product of any two rational elliptic surfaces such that the
singular fibers of each surface do not coincide (such as
$X^{\Sch}_{N})$ is called a Schoen manifold and is a smooth CY3 with
$h^{1,1}=h^{2,1}=19$. The 19 deformations are given by the 8
deformations of each rational elliptic surface along with the 3
dimensional family of M\"obius transformations of the base.

Consider the quotient of $X^{\Sch}_{N}$ by the free action of the
diagonal $G_{N}\subset G_{N}\times G_{N}$:
\[
\XtildeN^{\Sch} = X_{N}^{\Sch} / G_{N}.
\]

\begin{proposition}\label{prop: Nano Schoens have h11=h21=3}
$\XtildeN^{\Sch}$ is a nano-manifold of height 6 with
\[
h^{1,1}(\XtildeN^{\Sch})=h^{2,1}(\XtildeN^{\Sch})=3.
\]
Moreover, $\XtildeN^{\Sch}$ and $\XtildeN$ are related by a conifold
transition. 
\end{proposition}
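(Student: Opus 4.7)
The plan is to first verify that $\XtildeN^{\Sch}$ is a smooth Calabi--Yau threefold, then compute its Hodge numbers via a $G_{N}$-invariant analysis of the Picard group, and finally realize the conifold transition with $\XtildeN$ by degenerating the M\"obius parameter $\phi$. For smoothness, since $\phi$ is generic the four singular fibers of $S_{N}\to \PP^{1}$ and the four singular fibers of $S'_{N}\to \PP^{1}$ (post-composed with $\phi$) lie over eight distinct points of $\PP^{1}$. Any non-trivial $g\in G_{N}$ acts by non-trivial translation on each smooth elliptic fiber of $S_{N}$, so its fixed points on $S_{N}$ are confined to nodes of singular fibers, and similarly on $S'_{N}$; since the two singular-fiber loci are disjoint in the base, the diagonal $G_{N}$-action on $X_{N}^{\Sch}$ is free. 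Hence $\XtildeN^{\Sch}$ is smooth, and the holomorphic $3$-form descends because $G_{N}$ acts by fiberwise translations of the Abelian surface fibration. Every fiber of $X_{N}^{\Sch}\to \PP^{1}$, smooth or of degenerate type $I_{k}\times E$ or $E\times I_{k}$, has Euler characteristic $0$, so $e(X_{N}^{\Sch})=0$; the free action then forces $e(\XtildeN^{\Sch})=0$ and thus $h^{1,1}(\XtildeN^{\Sch})=h^{2,1}(\XtildeN^{\Sch})$.

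Next, I would compute $h^{1,1}(\XtildeN^{\Sch})$ as the rank of $\Pic(X_{N}^{\Sch})_{\QQ}^{G_{N}}$, using that $\Pic(X_{N}^{\Sch})_{\QQ}=H^{1,1}(X_{N}^{\Sch})$ is spanned by the $19$ algebraic classes coming from a section and singular-fiber components on each of $S_{N}$ and $S'_{N}$. The Mordell--Weil action of $G_{N}$ on $S_{N}$ transitively permutes the $N$ sections, cyclically rotates the $k$ components of each $I_{k}$ fiber through the quotient $\ZZ_{N}\to \ZZ_{k}$, and fixes the base, so $\Pic(S_{N})_{\QQ}^{G_{N}}$ has rank $2$, spanned by the fiber class $F$ and the symmetrized section $\sum_{g\in G_{N}} g\cdot \sigma_{0}$; the same conclusion holds for $S'_{N}$. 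From the standard Schoen description $\Pic(X_{N}^{\Sch})_{\QQ}=\Pic(S_{N})_{\QQ}\oplus \Pic(S'_{N})_{\QQ}/\langle F_{S_{N}}-F_{S'_{N}}\rangle$ as $G_{N}$-modules, together with $H^{1}(G_{N},\QQ)=0$, invariants commute with the amalgamation and we obtain rank $2+2-1=3$. Combining with the Euler-characteristic constraint gives $h^{1,1}(\XtildeN^{\Sch})=h^{2,1}(\XtildeN^{\Sch})=3$, so $\XtildeN^{\Sch}$ is a nano-manifold of height $6$.

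Finally, to exhibit the conifold transition, I would consider the one-parameter family $\mathcal{X}_{t}=S_{N}\times_{\phi_{t}}S'_{N}$, where $\phi_{t}$ is a path in $\mathrm{PGL}_{2}(\CC)$ from the base automorphism $\phi_{0}$ aligning the two sets of singular-fiber points (so $\mathcal{X}_{0}\cong X_{\con}$, with its $4N$ conifold singularities) to a generic $\phi_{1}$ (so $\mathcal{X}_{1}\cong X_{N}^{\Sch}$ is smooth). This realizes $X_{N}^{\Sch}$ as a smoothing of $X_{\con}$, and combined with the small resolution $X_{N}=\Bl_{\Gamma}(X_{\con})\to X_{\con}$ of Theorem~\ref{thm: BlGamma(Xcon) is a resolution} produces a conifold transition from $X_{N}$ to $X_{N}^{\Sch}$. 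The diagonal $G_{N}$ acts on every $\mathcal{X}_{t}$ and the blow-up center $\Gamma$ is $G_{N}$-invariant (being the graph of the $G_{N}$-invariant quotient map $S\to S'_{\sing}$), so the entire construction is $G_{N}$-equivariant and descends to the free quotients, yielding the desired conifold transition from $\XtildeN$ to $\XtildeN^{\Sch}$ through $X_{\con}/G_{N}$, whose $4$ conifold singularities are the $G_{N}$-orbits of the $4N$ original nodes (one orbit per singular fiber). The Hodge-number shift $(4,0)\mapsto (3,3)$ matches the expected change for $4$ nodes satisfying a single linear relation among their vanishing cycles. The main technical point is the Hodge-number computation, specifically verifying that the amalgamated-sum description of $\Pic(X_{N}^{\Sch})_{\QQ}$ is $G_{N}$-equivariant and that $\Pic(S_{N})_{\QQ}^{G_{N}}$ has rank $2$; the latter is cleanest to verify via the identification $\Pic(S_{N})_{\QQ}^{G_{N}}\cong \Pic(S_{N}/G_{N})_{\QQ}$, whose rank is $10$ minus the $8$ exceptional classes of the minimal resolution $S'\to S_{N}/G_{N}$.
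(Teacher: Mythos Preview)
Your argument is correct but proceeds in the opposite order from the paper's. The paper first determines $h^{2,1}(\XtildeN^{\Sch})=3$ by deformation theory: since $S_{N}$ and $S'_{N}$ are extremal, the only deformations of $X_{N}^{\Sch}$ are those of the M\"obius parameter $\phi$, and these are automatically $G_{N}$-invariant, so the three-dimensional family of M\"obius transformations gives exactly the invariant deformations. Then $e(\XtildeN^{\Sch})=0$ forces $h^{1,1}=3$. You instead compute $h^{1,1}$ directly as the rank of $\Pic(X_{N}^{\Sch})_{\QQ}^{G_{N}}$ using the amalgamated-sum description of the Schoen Picard lattice, together with $\mathrm{rk}\,\Pic(S_{N})_{\QQ}^{G_{N}}=2$, and then recover $h^{2,1}$ from the Euler characteristic. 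Both routes are valid; the paper's is shorter because extremality has already been invoked, while yours avoids deformation theory at the cost of a Picard-lattice analysis (your rank-$2$ computation is cleanest via the direct observation that the only $G_{N}$-invariants among $\{F,\text{sections},\text{fiber components}\}$ are $F$ and the symmetrized section, rather than the quotient argument you sketch at the end). Your treatment of the conifold transition matches the paper's, and your consistency check via the Clemens formula---one relation among the four vanishing cycles, reflecting that the four exceptional curves $\ctilde(k)$ are all homologous to $-\tfrac{1}{2N}\deltatilde$---is a nice addition not in the paper.
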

\begin{proof}
Since $S$ and $S'$ are extremal, the only deformations of
$X^{\Sch}_{N}$ which preserve the $G_{N}$ action are the deformations
of $\phi$. Therefore $h^{2,1}(\XtildeN^{\Sch})=3$ and since 
\[
e(X^{\Sch}_{N}) = e(\XtildeN^{\Sch}) = 0
\]
we have $h^{1,1}(\XtildeN^{\Sch})=3$. Under the deformation taking a
generic $\phi$ to the identity, $\XtildeN^{\Sch}$ deforms to
$X_{\con}/G_{N}$ which has $\XtildeN$ as a conifold resolution.
\end{proof}

The enumerative geometry of the Schoen nano-manifolds
$\XtildeN^{\Sch}$ is expected to be interesting. On one hand, it is
related to the enumerative geometry of $\XtildeN$ by the usual
conifold transition formulas. On the other hand, the enumerative
geometry of $\XtildeN^{\Sch}$ should be related to the enumerative
geometry of the CHL model
\[
(K3\times E)/G_{N}
\]
where $G_{N}$ acts symplectically on $K3$ and by translation on the
elliptic curve $E$. The DT invariants of these models were studied in
\cite{Bryan-Oberdieck-CHL}.\footnote{The argument connecting these two
models is a folklore degeneration argument that we learned from
G. Oberdieck.}

\section{Local Toric Geometry, Resolutions, and
Intersections}\label{sec: local toric geometry}

In this section, we use toric geometry to analyze $\Gamma \subset
X_{\con}$ in a formal neighborhood of the singular points and prove
(among other things) that $X_{N}=\Bl_{\Gamma}X_{\con}$ is a conifold
resolution of $X_{\con}$.

Since $\Gamma \subset X_{\con}$ is a divisor, it is Cartier away from
the $4N$ conifold singularities and hence the blow-up does nothing
away from the singularities. Thus understanding
$X_{N}=\Bl_{\Gamma}(X_{\con})$ reduces to the local problem of
understanding $\Gamma$ in a neighborhood of each singular point. This
is still difficult since $\Gamma$ is defined as the proper transform
of $\gamma \subset X_{\sing}$ and is hence given by a closure (for
example, it turns out that $\Gamma$ is necessarily singular and
non-normal although we will not need to prove that here).

For each $k\in \Theta_{N}$ let 
\[
l=N/k
\]
so that the corresponding
singular fibers in $X_{\sing}$ and $X_{\con}$ are of type $I_{k}\times
I_{1}^{\ZZ_{l}}$ and $I_{k}\times I_{l}$ respectively. At the node in
$I_{1}^{\ZZ_{l}}$, the surface $S'_{\sing}$ is formally locally
modelled on 
\begin{align*}
S_{\sing}^{\loc} &\cong \AA^{2}/\ZZ_{l}\\
 &\cong \Spec \left(\CC [x,y]^{\ZZ_{l}} \right)\\
 &\cong \Spec \left(\CC [x^{l},y^{l},xy] \right)\\
 &\cong \Spec \left(\CC [a,b,c]/(ab-c^{l}) \right)\\
\end{align*}
and the map to the base is locally modelled on the map
\[
S_{\sing}^{\loc}\to \AA^{1}
\]
given by $(x,y)\mapsto xy$, i.e. $(a,b,c)\mapsto c$.

Let 
\[
S^{\loc}\to S^{\loc}_{\sing}
\]
be the minimal resolution. Note that the exceptional fiber is a chain
of $(l-1)$ $\PP^{1}$s.

The local model of $X_{\sing}$ at a singular point in the $I_{k}\times
I_{1}^{\ZZ_{l}}$ fiber is then
\begin{align*}
X^{\loc}_{\sing} &= \AA^{2}\times_{\AA^{1}} S_{\sing}^{\loc}\\
&=\Spec \left(\CC [x,y,a,b,c]/(ab-c^{l},xy-c) \right)\\
&=\Spec \left(\CC [x,y,a,b,]/(ab-x^{l}y^{l}) \right)\\
\end{align*}
and $X_{\con}\to X_{\sing}$ is locally modelled on 
\[
X^{\loc}_{\con}\to X^{\loc}_{\sing}\quad \text{ where }\quad  X^{\loc}_{\con} =
\AA^{2}\times_{\AA^{1}} S^{\loc}.
\]

We note that graph of the quotient map $\AA^{2}\to S^{\loc}_{\sing}$
is the Weil divisor $\gamma \subset X^{\loc}_{\sing}$ with ideal
\[
(a-x^{l},b-y^{l})\subset \CC [x,y,a,b]/(ab-x^{l}y^{l}).
\]

The threefolds $X^{\loc}_{\sing}$ and $X^{\loc}_{\con}$ are both
toric, the former is an affine toric variety with 1 singularity and
the latter is a quasi-projective toric variety with $l$ conifold
singularities.

The toric threefolds $X^{\loc}_{\sing}$ and $X^{\loc}_{\con}$
determine fans ($F_{\sing}$ and $F_{\con}$ respectively) in $\RR^{3}$
whose cones are generated by vertices in the $z=1$ plane in
$\RR^{3}$. For $X^{\loc}_{\sing}$, the fan is the cone over the
rectangle with vertices $(0,0,1)$, $(0,l,1)$, $(1,0,1)$, and
$(1,l,1)$. For $X^{\loc}_{\con}$, the fan is the union of the cones
over the $l$ squares with vertices $(0,i-1,1)$, $(0,i,1)$,
$(1,i-1,1)$, and $(1,i,1)$ for $i=1,\dotsc ,l$. We illustrate this
below for the case of $l=3$:

\begin{center}
\tikz[scale=0.4]{

\draw[gray!80!black, line width=.5pt, fill=gray!80!black, line
width=.5pt, fill opacity=.1, text
opacity=1](0,0)--++(0,18)--++(6,0)--++(0,-18)--cycle; 

\node[] at (-1.5,0){$(0,0,1)$};
\node[] at (-1.5,18){$(0,3,1)$};
\node[] at (7.5,0){$(1,0,1)$};
\node[] at (7.5,18){$(1,3,1)$};

\draw [red, thick](3,-2)--+(0,22);
\draw [red, thick](-2,9)--+(10,0);

\node [red] at (3,-3) {$b$};
\node [red] at (3,21) {$a$};
\node [red] at (-3,9) {$y$};
\node [red] at (9,9) {$x$};

\draw[very thick, orange, opacity=0.8] (-2,3) to[out=60, in=120]
++(2.5,3) to[out=-60, in=-120] ++(2.5,3); 

\draw[very thick, orange, opacity=0.8] (3,9) to[out=60, in=120]
++(2.5,3) to[out=-60, in=-120] ++(2.5,3); 

\node [orange] at (-3,5) {$b=y^{3}$};
\node [orange] at (9,13) {$a=x^{3}$};
\node [orange] at (8,15.5) {$\gamma $};
\node [orange] at (-2,2.5) {$\gamma $};

\begin{scope}[xshift=18cm,yshift=0cm]
\draw[gray!80!black, line width=.5pt, fill=gray!80!black, line
width=.5pt, fill opacity=.1, text
opacity=1](0,0)--++(0,18)--++(6,0)--++(0,-18)--cycle; 
\draw [gray!80!black, line width=0.5pt] (0,6)--(6,6);
\draw [gray!80!black, line width=0.5pt] (0,12)--(6,12);

\node[] at (-1.5,0){$(0,0,1)$};
\node[] at (-1.5,6){$(0,1,1)$};
\node[] at (-1.5,12){$(0,2,1)$};
\node[] at (-1.5,18){$(0,3,1)$};
\node[] at (7.5,0){$(1,0,1)$};
\node[] at (7.5,6){$(1,1,1)$};
\node[] at (7.5,12){$(1,2,1)$};
\node[] at (7.5,18){$(1,3,1)$};

\draw [red, thick](3,-2)--+(0,22);
\draw [red, thick](-2,3)--+(10,0);
\draw [red, thick](-2,9)--+(10,0);
\draw [red, thick](-2,15)--+(10,0);

\draw[very thick, decorate,decoration={coil,aspect=0, segment length=5pt}, orange,opacity=0.8] (3,3) --+(0,12);

\draw[very thick, orange, opacity=0.8] (0,-1) to[out=60, in=120] ++(1.5,2) to[out=-60, in=-135] ++(1.5,2);

\draw[very thick, orange, opacity=0.8] (3,15) to[out=60, in=120] ++(1.5,2) to[out=-60, in=-135] ++(1.5,2);

\node [orange] at (0,-1.5) {$\Gamma $};
\node [orange] at (6,19.5) {$\Gamma $};

\end{scope}

}

The fans $F_{\sing}$ and $F_{\con}$ with the divisors $\gamma$ and
$\Gamma $.
\end{center}
\bigskip

In the above pictures (which live in the $z=1$ plane), the cones of
the fans are given by the cones over the grey polygons. The dual
polytope is depicted in red and corresponds to the torus invariant
points, curves, and divisors\footnote{The faces of the dual polytope
should be perpendicular to the rays in the fan, so the red polytope
does not lie in the $z=1$ plane. We depict a projection of the dual
polytope to the plane.}. For example, the plane perpendicular to the
ray $(1,3,1)$ in the left picture corresponds to the torus invariant
(Weil) divisor given by the ideal
\[
(y,b)\subset \CC [x,y,a,b]/(ab-x^{3}y^{3}).
\]
This divisor is a copy of $\AA^{2}$ with coordinates $x$ and $a$ and
the intersection of $\gamma$ with it is given by the curve $a=x^{3}$
which we draw schematically in orange. The proper transform of $\gamma
\subset X_{\sing}^{\loc}$, namely $\Gamma \subset X^{\loc}_{\con}$,
can intersect the exceptional curves of $X^{\loc}_{\con}\to
X^{\loc}_{\sing}$ in complicated ways, which we depict in the right
hand picture with a squiggly orange curve.

To deal with the potential complications of $\Gamma$ and its blowup,
we find a family of divisors that interpolates between $\Gamma$ and a
toric divisor. Let $D_{\epsilon}\subset X^{\loc}_{\sing}$ be the Weil
divisor given by the ideal 
\[
(y^{l} - \epsilon b,a-\epsilon x^{l})\subset \CC [x,y,a,b]/(ab-x^{l}y^{l}).
\]
Then $D_{1}=\gamma$ and $D_{0}$ is the torus invariant divisor with
ideal $(y^{l},a)$. $D_{0}$ has support on the torus invariant divisor
with ideal $(y,a)$ which corresponds to the ray generated by $(1,0,1)$
in the fan. Note that $D_{0}$ has multiplicity $l$ since on the
interior of $D_{0}$ where $b,x\neq 0$, we have $a=x^{l}y^{l}b^{-1} =
\text{unit}\cdot y^{l}$.

Let $D'_{\epsilon}\subset X^{\loc}_{\con}$ be the proper transform of
$D_{\epsilon}\subset X^{\loc}_{\sing}$. For the toric case of
$\epsilon =0$, $D'_{0}$ can be determined by the combinatorics of the
fan. Indeed, as with any torus invariant divisor, $D_{0}\subset
X^{\loc}_{\sing}$ is determined by an integer valued function on the
vectors generating the rays of the fan. In this case, the function
takes the values $(0,0,0,l)$ at the generators
$((0,0,1),(0,l,1),(1,l,1),(1,0,1))$ respectively. $D_{0}$ is not
Cartier: if it were, its associated values on the vectors generating
the fan would be the restriction of a linear function on the cone. It
is however Cartier away from the singular point. This is reflected in
the fact that the values are the restrictions of linear functions on
the 2-dimensional faces of the cone. Since $F_{\con}$ is obtained from
$F_{\sing}$ by adding various new generators to the 2-dimensional
faces of $F_{\sing}$, the proper transform $D'_{0}$ is given by the
function on the generators obtained by restricting the linear function
on the faces of $F_{\sing}$ to the new generators. Namely, $D_{0}'$ is
given by the function taking values 0 on the generators $(0,j,1)$ and
$l-j$ on the generators $(1,j,1)$ for $j=0,\dotsc ,l$.

Note that $D'_{0}\subset X^{\loc}_{\con}$ is still non-Cartier: on the
cone in $F_{\con}$ generated by
$((0,i-1,1),(0,i,1),(1,i-1,1),(1,i,1))$, $D_{0}'$ take values
$(0,0,l-i+1,l-i)$. On this cone, which corresponds to the
$T$-invariant affine neighborhood of the $i$th conifold point in
$X_{\con}^{\loc}$, $D'_{0} $ is the sum of the divisors taking values
$(0,0,1,0)$ and $(0,0,l-i,l-i)$ on the generators. The latter is
principal and hence doesn't affect the blowup. The blowup of the
former gives the conifold resolution obtained by adding the
2-dimensional face spanned by $(0,i-1,1)$ and $(1,i,1)$ to the
fan. Thus
\[
X^{\loc} = \Bl_{D'_{0}}(X^{\loc}_{\con})
\]
is smooth and is given by the fan $F$ colored grey in the picture
below: \bigskip
\begin{center}
\tikz[scale=0.5]{

\draw[gray!80!black, line width=.5pt, fill=gray!80!black, line
width=.5pt, fill opacity=.1, text
opacity=1](0,0)--++(0,18)--++(6,0)--++(0,-18)--cycle; 
\draw [gray!80!black, line width=0.5pt] (0,6)--(6,6);
\draw [gray!80!black, line width=0.5pt] (0,12)--(6,12);
\draw [gray!80!black, line width=0.5pt] (0,0)--(6,6);
\draw [gray!80!black, line width=0.5pt] (0,6)--(6,12);
\draw [gray!80!black, line width=0.5pt] (0,12)--(6,18);

\draw [red, very thick] (2,-2)--++(2,4)--++(-2,2)--++(2,4)--++(-2,2)--++(2,4)--++(-2,2)--++(2,4);
\draw [red, very thick] (-2,2)--++(4,2)--++(2,4)--++(4,2);
\draw [red, very thick] (-2,8)--++(4,2)--++(2,4)--++(4,2);
\draw [red, very thick] (-2,14)--++(4,2)--++(2,4);
\draw [red, very thick] (4,2)--++(4,2);

\node[blue] at (-0.5,-0.75){$0$};
\node[blue] at (-0.5,5.25){$0$};
\node[blue] at (-0.5,11.25){$0$};
\node[blue] at (-0.5,17.75){$0$};
\node[blue] at (6.5,-0.75){$3$};
\node[blue] at (6.5,5.25){$2$};
\node[blue] at (6.5,11.25){$1$};
\node[blue] at (6.5,17.75){$0$};

\node[] at (-1.25,0.25){$(0,0,1)$};
\node[] at (-1.25,6){$(0,1,1)$};
\node[] at (-1.25,12){$(0,2,1)$};
\node[] at (-1.25,18.5){$(0,3,1)$};
\node[] at (7.5,0.25){$(1,0,1)$};
\node[] at (7.5,6){$(1,1,1)$};
\node[] at (7.5,12){$(1,2,1)$};
\node[] at (7.5,18.5){$(1,3,1)$};

\node[] at (3,3.7){$c_{1}$};
\node[] at (3,9.7){$c_{2}$};
\node[] at (3,15.7){$c_{3}$};
\node [] at (2.5,6.5) {$b_{1}$};
\node [] at (2.5,12.5) {$b_{2}$};

}

The fan $F$ of $X^{\loc}$, the dual
polytope, and the multiplicities of 
$D''_{0}$.
\end{center}

\bigskip

Let $D''_{0}\subset X^{\loc}$ be the proper transform of $D_{0}'$. The
values of $D_{0}''$ on the generators of the fan $F$ are depicted in
blue in the above picture and the dual polytope is depicted in red.

The exceptional locus of $X^{\loc}\to X_{\sing}^{\loc}$ is given by
$c_{1}\cup b_{1}\cup \dotsb \cup b_{l-1}\cup c_{l}$, a chain of $2l-1$
$\PP^{1}$s. The $l$ curves $c_{1},\dotsc ,c_{l}$ are the expectional
curves of the conifold resolution $X^{\loc}\to X^{\loc}_{\con}$ and
the $l-1$ curves $b_{1},\dotsc ,b_{l-1}$ are the proper transforms of
the exceptional curves of $X_{\con}^{\loc}\to X^{\loc}_{\sing}$.

We can now compute the intersection numbers $D_{0}''\cdot c_{i}$
and $D_{0}''\cdot b_{i}$. Let $D[r,s,1]$ denote the torus invariant
divisor corresponding to the ray generated by $(r,s,1)$. So then 
\[
D''_{0} = \sum_{k=0}^{l} (l-k) D[1,k,1].
\]
Now the intersection of the proper curves $b_{i}$ and $c_{i}$ with
$D[r,s,1]$ can be computed using standard toric geometry. The
intersection number is 0 if the curve and divisor are disjoint and 1
if they meet in a single point. If the curve is contained in the
divisor, one can find a linearly equivalent divisor (by adding a
suitable global linear function on the fan) such that no component of
the new divisor contains the curve and then easily compute the
intersection number. The results are
\begin{align*}
b_{i}\cdot D[1,k,1] &= \begin{cases}
1&\text{if $k=i+1$}\\
-1&\text{if $k=i$}\\
0&\text{otherwise.}\\
\end{cases}\\
c_{i}\cdot D[1,k,1] &= \begin{cases}
1&\text{if $k=i-1$}\\
-1&\text{if $k=i$}\\
0&\text{otherwise.}\\
\end{cases}
\end{align*}
We then get
\begin{align*}
b_{i}\cdot D''_{0} & = l-(i+1) - (l-i)\\
&=-1\\
c_{i}\cdot D''_{0} & = l-(i-1) - (l-i)\\
&=1.
\end{align*}

We summarize the above discussion in the following lemma.
\begin{lemma}\label{lem: Xloc is smooth and intersection numbers}
Let $X^{\loc}=\Bl_{D_{0}'}(X^{\loc}_{\con})$. Then $X^{\loc}$ is a
smooth toric CY3. The exceptional set of the resolution $X^{\loc}\to
X^{\loc}_{\sing}$ is a chain of $2l-1$ curves $c_{1}\cup b_{1}\cup
\dotsb \cup b_{l-1}\cup c_{l}$ where the $c_{i}$'s are the exceptional
curves of $X^{\loc}\to X^{\loc}_{\con}$ and the $b_{i}$'s are the
proper transforms of the exceptional curves of $X^{\loc}_{\con}\to
X^{\loc}_{\sing}$. Let $D_{0}''\subset X^{\loc}$ be the proper
transform of $D_{0}'.$ Then the intersection numbers of $D_{0}''$ with
the exceptional curves are given by
\begin{align}
D_{0}''\cdot c_{i} &= +1\text{, for } i=1,\dotsc ,l,\\
D_{0}''\cdot b_{i}&=-1\text{, for } i=1,\dotsc ,l-1.\label{eqn:D0.bi=-1}
\end{align}
\end{lemma}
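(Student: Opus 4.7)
The proof is essentially assembled from the toric analysis already carried out before the lemma statement; my plan is to organize that material into four explicit steps.

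First I would verify smoothness and the Calabi--Yau condition for $X^{\loc}$ by inspecting the fan $F$ obtained from $F_{\con}$ by subdividing each of the $l$ conifold cones along the diagonal spanned by $(0,i-1,1)$ and $(1,i,1)$. Each resulting three-dimensional cone is generated by three vectors $(0,j,1),(0,j+1,1),(1,j+1,1)$ or $(0,j,1),(1,j,1),(1,j+1,1)$, each of which forms a $\ZZ$-basis of the lattice; this gives smoothness. Since every ray generator lies in the hyperplane $z=1$, the toric variety $X^{\loc}$ has trivial canonical bundle, so it is a smooth toric CY3.

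Next I would identify the exceptional set of $X^{\loc}\to X^{\loc}_{\sing}$. Combinatorially, the rays added to $F_{\sing}$ to produce $F$ are the interior lattice generators $(0,i,1)$ and $(1,i,1)$ for $i=1,\dots,l-1$ together with the diagonal rays introduced by the conifold subdivision; the corresponding torus-invariant curves in $X^{\loc}$ are the $l$ curves $c_{1},\dots,c_{l}$ dual to the diagonal rays (contracted by $X^{\loc}\to X^{\loc}_{\con}$) and the $l-1$ curves $b_{1},\dots,b_{l-1}$ dual to the rays $(-,j,1)$ with $0<j<l$. Reading adjacency of two-dimensional cones in $F$ gives precisely the advertised chain $c_{1}\cup b_{1}\cup\dots\cup b_{l-1}\cup c_{l}$, and the $b_{i}$'s are the strict transforms of the exceptional $(-2)$-curves of $S^{\loc}\to S^{\loc}_{\sing}$ pulled back under the fiber product, as predicted.

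Then I would compute the intersection numbers of $D_{0}''$ with each curve. Using the description $D_{0}''=\sum_{k=0}^{l}(l-k)\,D[1,k,1]$ together with the toric formulas
\[
b_{i}\cdot D[1,k,1]=\delta_{k,i+1}-\delta_{k,i},\qquad c_{i}\cdot D[1,k,1]=\delta_{k,i-1}-\delta_{k,i},
\]
(the cases where the curve meets the divisor transversally or is disjoint being read off directly from the fan, and the self-intersection cases being handled by replacing $D[1,k,1]$ with a linearly equivalent divisor obtained from a global linear function on $F$), a one-line telescoping calculation yields $D_{0}''\cdot c_{i}=+1$ and $D_{0}''\cdot b_{i}=-1$.

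The only genuinely subtle point, and the one I would treat most carefully, is justifying the passage from $D_{0}'$ to $D_{0}''$ as a proper transform: one must confirm that none of the exceptional curves of $X^{\loc}\to X^{\loc}_{\con}$ is contained in $D_{0}'$. This follows from the local decomposition on each subdivided cone, where $D_{0}'$ restricts to the sum of a principal divisor and the toric Weil divisor with generator values $(0,0,1,0)$; blowing up the latter produces the new ray without absorbing any exceptional curve. Once this is in place, the intersection computation is unambiguous and the lemma follows.
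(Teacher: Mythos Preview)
Your proposal is correct and follows essentially the same approach as the paper, which presents the lemma explicitly as a summary of the preceding toric discussion; your four steps faithfully organize that material. Two minor remarks: in Step~2 your language conflates rays and walls (the curves $b_i$ and $c_i$ correspond to two-dimensional cones, not to rays), though your subsequent use of them is correct; and your Step~4 is unnecessary since $X^{\loc}\to X^{\loc}_{\con}$ is a small resolution (no new rays are introduced, only new walls), so the proper transform $D_0''$ is simply the torus-invariant divisor with the same ray-values as $D_0'$, and the intersection computation proceeds via linear equivalence exactly as the paper indicates.
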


We next show that the above lemma holds for the whole family of
divisors parameterized by $\epsilon$. This will allow us to convert
the toric result about $X^{\loc}$ into a global result for $X$ since
the divisor $\Gamma =D'_{\epsilon=1}$ is well defined globally.

Let 
\[
\DD \subset \AA^{1}\times X^{\loc}_{\sing}
\]
be the Weil divisor with ideal
\[
(y^{l}-\epsilon b,a-\epsilon x^{l})\subset \CC [x,y,a,b,\epsilon ]/(ab-x^{l}y^{l}).
\]
Let $\DD '$ be the proper transform of $\DD$ under $\AA^{1}\times
X^{\loc}_{\con}\to \AA^{1}\times X^{\loc}_{\sing}$ and consider
\[
\XX^{\loc} = \Bl_{\DD '}(\AA^{1}\times X^{\loc}_{\con}). 
\]

By the functoriality of blowups, the fiber of $\XX^{\loc}\to \AA^{1}$
over $\epsilon$ is given by
$\Bl_{D'_{\epsilon}}(X^{\loc}_{\con})$. Then since we've shown that
$\Bl_{D'_{0}}(X^{\loc}_{\con})$ is non-singular, it follows that
$\Bl_{D'_{\epsilon}}(X^{\loc}_{\con})$ is non-singular for generic
$\epsilon$ and hence for all $\epsilon$. In fact, since the
singularities are all conifolds and the resolutions are conifold
resolutions, we have $\XX^{\loc} = X^{\loc}\times \AA^{1}$.  Let $\DD
''\subset \XX^{\loc}$ be the proper transform of $\DD '$. Then
$D_{\epsilon}''\cdot c_{i}$ and $D''_{\epsilon }\cdot b_{i}$ are
independent of $\epsilon$ since they are given by $\deg (\OO (\DD
'')|_{\epsilon \times c_{i}})$ and $\deg (\OO (\DD '')|_{\epsilon
\times b_{i}})$.

In particular, when $\epsilon =1$, the divisor $D_{\epsilon
=1}'\subset X^{\loc}_{\con}$ is a local model for $\Gamma \subset
X_{\con}$ and then the local results for $D'_{\epsilon =1}$ imply the
following global results:

\begin{proposition}\label{prop: Blowup of Gamma is smooth,
intersection numbers of Gamma with exceptional curves} Let $\Gamma
\subset X_{\con}$ be as defined in \S~\ref{sec: the construction of
XN}. Let $X_{N}=\Bl_{\Gamma}(X_{\con})$ and let us also denote by
$\Gamma\subset X_{N}$ the proper transform of $\Gamma\subset
X_{\con}$. Then $X_{N}$ is a smooth CY3 and
\begin{equation}\label{eqn: Gamma.c=1}
\Gamma \cdot c = 1
\end{equation}
for any exceptional curve $c$ of $X\to X_{\con}$ and
\begin{equation}\label{eqn: Gamma.b=-1}
\Gamma \cdot b = -1
\end{equation}
for any curve $b$ given by the proper transform of an exceptional
curve of $X_{\con}\to X_{\sing}$. 
\end{proposition}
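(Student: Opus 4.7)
The plan is to globalize the local toric computations already carried out for the model $(X^{\loc}_{\con}, D'_\epsilon)$. Since $\Gamma \subset X_{\con}$ is a Weil divisor that is Cartier away from the $4N$ conifold points, the blowup $\Bl_\Gamma(X_{\con})$ agrees with $X_{\con}$ outside these points. Thus smoothness of $X_N$ and the intersection numbers $\Gamma \cdot c, \Gamma \cdot b$ are local assertions at each conifold singularity, which can be checked formally.

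First I would identify the formal local model. At each conifold point of $X_{\con}$ lying over the fiber of type $I_k \times I_{N/k}$, the formal neighborhood is isomorphic to the formal neighborhood of the corresponding singular point of $X^{\loc}_{\con}$ (with $l = N/k$), and by the explicit description of $\gamma \subset X^{\loc}_{\sing}$ as the graph of the toric quotient $(x,y) \mapsto (x^l, y^l, xy)$, the proper transform $\Gamma$ matches, under this local identification, the proper transform $D'_{\epsilon=1}$ of $D_{\epsilon=1} = \gamma$. This is precisely the $\epsilon = 1$ member of the family $\DD' \subset \AA^1 \times X^{\loc}_{\con}$.

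Next I would invoke the family argument already set up in the excerpt. The total space $\XX^{\loc} = \Bl_{\DD'}(\AA^1 \times X^{\loc}_{\con})$ has fiber $\Bl_{D'_0}(X^{\loc}_{\con}) = X^{\loc}$ over $\epsilon = 0$, which Lemma \ref{lem: Xloc is smooth and intersection numbers} shows is smooth. Since conifold resolutions are rigid among projective crepant resolutions and smoothness is an open condition, every fiber is smooth and in fact $\XX^{\loc} \cong X^{\loc} \times \AA^1$. Applied at $\epsilon = 1$, this yields smoothness of the local model for $X_N$, hence smoothness of $X_N$ globally. The CY3 property (trivial canonical bundle) is inherited from $X_{\con}$ since the blowup is a conifold resolution, hence crepant; and $H^1(X_N, \OO_{X_N}) = 0$ follows from the corresponding vanishing for the fibration with Abelian surface fibers over $\PP^1$.

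For the intersection numbers, the degrees $\deg(\OO(\DD'')|_{\epsilon \times c_i})$ and $\deg(\OO(\DD'')|_{\epsilon \times b_i})$ are locally constant integer-valued functions of $\epsilon$ and therefore constant. Lemma \ref{lem: Xloc is smooth and intersection numbers} computes them at $\epsilon = 0$ to be $+1$ and $-1$ respectively; transporting this to $\epsilon = 1$ and then gluing into $X_N$ yields \eqref{eqn: Gamma.c=1} and \eqref{eqn: Gamma.b=-1}. The main conceptual obstacle is the identification at the start: one must check that the formal local model of $\Gamma$ really is $D'_1$, i.e.\ that taking the proper transform commutes with formal completion at a conifold point. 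This is standard because blowing up and proper transform are compatible with flat base change, and formal completion along a closed point is faithfully flat.
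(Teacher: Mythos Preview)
Your proposal is correct and follows essentially the same route as the paper: reduce to the formal local model, use the family $\DD'$ over $\AA^1$ interpolating between the toric divisor $D'_0$ and $D'_1 = \Gamma$, invoke smoothness at $\epsilon=0$ plus rigidity of conifold resolutions to get $\XX^{\loc}\cong X^{\loc}\times\AA^1$, and deduce constancy of the intersection numbers via $\deg(\OO(\DD'')|_{\epsilon\times c_i})$. You add a few words of justification (crepancy for the CY condition, flat base change for compatibility of proper transform with formal completion) that the paper leaves implicit, but the argument is the same.
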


\section{Divisors, curves, and intersection numbers on
$X_{N}$}\label{sec: Divs curves and int numbers on XN}

We now study the cohomology classes of $X_{N}$ and $\XtildeN$. In
particular we find a $\QQ$ basis for curve classes and divisor classes
and we compute their intersections numbers.

The divisor classes of $X_{N}$ necessarily consist of the divisor
classes of the generic fiber and the fiber classes. The generic fiber
is spanned by the three divisors
\[
S = S\times_{\PP^{1}}\{0 \},\quad S' = \{0 \}\times_{\PP^{1}}S',\quad
\text{and $\quad \Gamma$.}
\]
The fiber divisor classes are given by the generic fiber $F$ and the
irreducible components of the singular fibers.

We now develop notation for the curves and divisors in the singular
fibers of $X_{N}\to \PP^{1}$. As discussed in Section~\ref{sec: the
construction of XN}, for each $k\in \Theta_{N}$ there is a
corresponding singular fiber of $X_{N}\to \PP^{1}$ which is the proper
transform of $I_{k}\times I_{l}\subset X_{\con}$ where we recall that 
\[
l=\frac{N}{k}.
\]
Fibers of this type are called multi-banana fibers and were studied by
Kanazawa-Lau \cite{Kanazawa-Lau} and Morishige
\cite{Morishige-multi-banana}. We label\footnote{If $k$ is repeated in
$\Theta_{N}$ we will use primes to further distinguish so that the
labels will be in $(1,2,3,6)$, $(1,1',5,5')$, $(2,2',4,4')$, or
$(3,3',3'',3''')$.} such a fiber by $F(k)$.

The singular fiber $F(k)\subset X_{N}$ is a non-normal toric surface
and its formal neighborhood $\Fhat (k)$ has a universal cover
which is a formal toric Calabi-Yau threefold $\widehat{U}$ modelled on
the toric CY3 whose fan in $\RR^{3}$ consists of the cones generated
by
\[
(i,j,1),\, (i+1,j,1),\, (i+1,j+1,1),\quad (i,j)\in \ZZ \times \ZZ ,
\]
and 
\[
(i,j,1),\, (i,j+1,1),\, (i+1,j+1,1),\quad (i,j)\in \ZZ \times \ZZ .
\]
An element $(s,t)\in \ZZ \times \ZZ$ in the group of deck
transformations of $\widehat{U}\to \Fhat (k)$ acts on the generators
of the cones by translation by $(sk,tl,0)$ (see
\cite[\S~2.2]{Morishige-multi-banana} for more details). We then may
choose a fundamental domain for the $\ZZ \times \ZZ$ action on the
cones, namely the cones given above with $i\in \{0,\dotsc ,k-1 \}$ and
$j\in \{0,\dotsc ,l-1 \}$. The quotient of $\Fhat (k)$ by the action
of $G_{N}$ is then given by the quotient of $\widehat{U}$ by $\ZZ
\times \ZZ$ where now $(s,t)$ acts by translation by $(s,t,0)$ on the
generators of the cones. This quotient is the formal neighborhood of a
banana fiber, $\Fhat_{\ban}$ in the notation of \cite{Bryan-Banana}:
\[
\Fhat (k)/G_{N}\cong \Fhat_{\ban}.
\]

We label the torus invariant divisors in $F(k)$ by $D_{ij}(k)$,
$i=1,\dotsc ,k,\, j=1,\dotsc ,l$ and the torus invariant curves by
$a_{ij}(k)$, $b_{ij}(k)$, and $c_{ij}(k)$. We do this such that the
zero section meets $D_{kl}(k)$ and the $b$ and $c$ curves coincide
with the $b$ and $c$ curves in the local model. In particular,
$c_{ij}(k)$ are the exceptional curves of the conifold resolution
$X_{N}\to X_{\con }$, $b_{ij}(k)$ for $ j\neq l$ are the proper
transforms of the exceptional curves of $X_{\con}\to X_{\sing}$, and
$b_{il}(k)$ is the proper transform of the curve
\[
n_{i}\times I_{1}^{\ZZ_{l}}\subset I_{k}\times
I_{1}^{\ZZ_{l}}\subset X_{\sing}.
\]
Finally, $a_{ij}(k)$ is the proper transform of 
\[
a_{i}\times n_{j}\subset I_{i}\times I_{l}\subset X_{\con}
\]
where $a_{i}$ is the $i$th curve in $I_{i}$ and $n_{j}$ is the $j$th
node in $I_{l}$.

We illustrate this below for $N=6$, $k=3$, and $l=2$ (we suppress the $k$
from the notation in the diagram):

\begin{center}

\tikz[scale=1]{
\foreach \x\y\z in {0/0/A,2.4/0/B,4.8/0/C,0/-2.4/D,2.4/-2.4/E,4.8/-2.4/F}
\node[draw, line width=.5pt, black!50, minimum size=2.4cm, outer sep=0pt] (\z) at (\x,\y) {};
\foreach \i in {A,B,...,F}
\draw[black!50, line width=.5pt] (\i.north east)--(\i.south west);

\foreach \i[count=\j] in {A,B,...,F}
\draw[red!80!black, line width=.5pt, fill=red!80!black, line
width=.5pt, fill opacity=.1, text opacity=1]
(\i.center)--($(\i.center)+(-4mm,4mm)$)--($(\i.north)+(4mm,8mm)$)--
coordinate[pos=1](topvertex\j)
++(16mm,8mm)--++(8mm,-8mm)--++(-8mm,-16mm)--
coordinate[pos=1](bottomvertex\j) ($(\i.center)+(4mm,-4mm)$)--cycle; 

\coordinate(n1) at ($(A.north)+(4mm,8mm)$);
\coordinate(n2) at ($(C.north)+(28mm,8mm)$);
\coordinate(n2a) at ($(n2)+(16mm,8mm)$);
\coordinate(n3) at ($(F.north)+(28mm,8mm)$);
\coordinate(n4) at ($(F.center)+(20mm,4mm)$);
\coordinate(n5) at ($(A.center)+(-4mm,4mm)$);
\coordinate(n6) at ($(D.center)+(-4mm,4mm)$);
\foreach \i in {1,2,3}
\draw[red!80!black, line width=.5pt] (topvertex\i)--+(4mm,8mm);

\foreach \i in {4,5,6}
\draw[red!80!black, line width=.5pt] (bottomvertex\i)--+(-8mm,-16mm);
\draw[red!80!black, line width=.5pt] (n1)--+(-8mm,8mm);

\draw[red!80!black, line width=.5pt] (n2)--+(8mm,4mm);
\draw[red!80!black, line width=.5pt] (n3)--+(8mm,4mm);
\draw[red!80!black, line width=.5pt] (n4)--+(8mm,-8mm);

\draw[red!80!black, line width=.5pt] (n5)--+(-12mm,-8mm);
\draw[red!80!black, line width=.5pt] (n6)--+(-12mm,-8mm);

\foreach \i in {1,2,3}{
\draw[very thick, decorate,decoration={coil,aspect=0, segment
length=5pt}, orange] ($(topvertex\i)+(4mm,8mm)$) -- (topvertex\i); 
\draw[very thick, orange] (topvertex\i) to[out=245, in=-55]
++(-6mm,-14mm) to[out=125, in=65] (bottomvertex\i);

\draw[very thick, blue] (topvertex\i)+(-1.8cm,-1.2cm) to[out=0, in=180]
++(-4mm,-10mm) to[out=0, in=180] ++(1cm,-0.2cm);
}

\draw[very thick, blue] (64mm,26mm) to[out=-80, in=100]
++(1.5mm,-8mm) to[out=-80, in=100] ++(-1.5mm,-16mm);
\draw[very thick, blue] (64mm,2mm) to[out=-80, in=100]
++(1.5mm,-8mm) to[out=-80, in=100] ++(-1.5mm,-16mm);
\node [blue] at(67mm,8mm) {$f'$};
\node [blue] at(56mm,19mm) {$f$};
\node [] at(70mm,19.5mm) {\scriptsize $(0,0)$};
\fill [black] (65.5mm,17.8mm) circle(0.6mm);

\foreach \i\j in {1/D,2/E,3/F}{
\draw[very thick, decorate,decoration={coil,aspect=0, segment length=5pt}, orange] (bottomvertex\i) -- ($(\j.center)+(-4mm,4mm)$);
\draw[very thick, orange, shorten >=10mm] ($(\j.center)+(-4mm,4mm)$)
to[out=245, in=-55] ++(-6mm,-14mm) to[out=123, in=65] ++(-12mm,-20mm); 
}

%
\draw[very thick, decorate,decoration={coil,aspect=0, segment length=5pt}, orange] (n3) -- (n4);
\draw[very thick, orange, shorten >=10mm] (n4) to[out=245, in=-55] ++(-6mm,-14mm) to[out=123, in=65] ++(-12mm,-20mm);

\foreach \i\j in {A/$D_{11}$,B/$D_{21}$,C/$D_{31}$}
\node[] at ([yshift=2cm]\i.north east)  {\small \j};

\foreach \i\j in {A/$D_{12}$,B/$D_{22}$,C/$D_{32}$}
\node[] at ([yshift=0cm]\i.north east)  {\small  \j};

\foreach \i\j in {A/$D_{11}$,B/$D_{21}$,C/$D_{31}$}
\node[] at ([yshift=0cm]\i.south east)  {\small  \j};
\foreach \i\j in {A/$b_{12}$,B/$b_{22}$,C/$b_{32}$}
\node[] at ([xshift=-3.5mm ,yshift=0cm]\i.north)  {\scriptsize \j};

\foreach \i\j in {A/$\Gamma $,B/$\Gamma $,C/$\Gamma $}
\node[orange] at ([xshift=-3.5mm ,yshift=-1.0cm]\i.north east)  {\small \j};

\foreach \i\j in {A/$a_{11}$,B/$a_{21}$,C/$a_{31}$}
\node[] at ([xshift=0mm ,yshift=1.45cm]\i.north east)  {\scriptsize \j};

\foreach \i\j in {A/$c_{21}$,B/$c_{31}$,C/$c_{11}$}
\node[] at ([xshift=1.4cm ,yshift=1.35cm]\i.north east)  {\scriptsize \j};

\node[] at ([xshift=-5mm ,yshift=1.35cm]A.north)  {\scriptsize $c_{11}$};

\foreach \i\j in {A/$a_{12}$,B/$a_{22}$,C/$a_{32}$}
\node[] at ([xshift=0mm ,yshift=-2.5mm]\i.east)  {\scriptsize \j};

\foreach \i\j in {A/$c_{12}$,B/$c_{22}$,C/$c_{31}$}
\node[] at ([xshift=-2mm ,yshift=-1.5mm]\i.center)  {\scriptsize \j};

\foreach \i\j in {D/$b_{11}$,E/$b_{21}$,F/$b_{31}$}
\node[] at ([xshift=-4mm ,yshift=0cm]\i.north)  {\scriptsize \j};

\foreach \i\j in {D/$c_{11}$,E/$c_{21}$,F/$c_{31}$}
\node[] at ([xshift=-2mm ,yshift=-2mm]\i.center)  {\scriptsize \j};

\foreach \i\j in {D/$a_{11}$,E/$a_{21}$,F/$a_{31}$}
\node[] at ([xshift=0mm ,yshift=-2.5mm]\i.east)  {\scriptsize \j};
}

 A fundamental domain for the toric diagram
(gray) and dual polytope (red) for $\Fhat (3)$, the formal
neighborhood of the singular fiber $F(3)\subset X_{6}$.
\end{center}
\bigskip

Note that in the above example, the fan and dual polytope of the local
model appears $k=3$ times, each with $l=2$. The intersection of
$\Gamma$ with $F(3)$ is depicted in orange.

The $4N+4$ divisor classes given by 
\[
\{F,S,S',\Gamma ,D_{ij}(k) \}_{k\in \Theta_{N},\, i=1,\dotsc ,k,\, j=1,\dotsc ,l}
\]
admit four obvious relations, namely
\[
\sum_{i,j}D_{ij}(k) = F
\]
for each $k\in \Theta_{N}$. Since the Picard number of $X_{N}$ is 4N,
there are no further relations.

We obtain the following intersection numbers easily since in each
case, the curve and divisor are either disjoint or intersect
transversely in a single point:
\[
S\cdot a_{ij}(k) = S'\cdot b_{ij}(k)=S\cdot c_{ij}(k) = S'\cdot
c_{ij}(k) = 0.
\]
and
\begin{align*}
S'\cdot a_{ij}(k) &= \begin{cases} 1&\text{if $i=k$,}\\ 0& \text{if $i\neq k$,}\end{cases}\\
S\cdot b_{ij}(k) &= \begin{cases} 1&\text{if $j=l$,}\\ 0 &\text{if $j\neq l$.}\end{cases}\\
\end{align*}
By Equations~\eqref{eqn: Gamma.c=1} and \eqref{eqn: Gamma.b=-1} in
Proposition~\ref{prop: Blowup of Gamma is smooth, intersection numbers
of Gamma with exceptional curves}, we have
\begin{align*}
\Gamma \cdot c_{ij}(k) &=1,\\
\Gamma \cdot b_{ij}(k) &=-1 \quad  \text{ for $j\neq l$.}\\
\end{align*}
Thus to get all the intersection numbers of the fiber curves with
$\{S,S',\Gamma \}$, it remains to compute $\Gamma \cdot a_{ij}(k)$ and
$\Gamma \cdot b_{il}(k)$. To determine these, we use the curve classes
\[
f=S\cdot F,\quad f' = S'\cdot F
\]
which are the fiber curve classes of the elliptic fibrations $S\to
\PP^{1}$ and $S'\to \PP^{1}$. We may write these classes in terms of
$a_{ij}(k),b_{ij}(k),c_{ij}(k)$ since they are given as the total
transform of the curves $I_{k}\times \{0 \}$ and $\{0 \}\times I_{l}$
respectively in the fiber $F(k)\subset X_{N}$. These curves are
illustrated in the above diagram in blue.

Since $I_{k}\times \{0 \}$ is homologous to $I_{k}\times n_{j}$ where
$n_{j}$ is any node, and the total transform of $I_{k}\times n_{j}$ is
the union 
\[
\bigcup_{i}\,\, c_{ij}(k) \cup a_{ij}(k)
\]
we get the following relation for any fixed $k$
and $j$:
\[
f = \sum_{i=1}^{k}  c_{ij}(k)+a_{ij}(k).
\]
Similarly, for any $k$ and $i$ we get
\[
f' = \sum_{j=1}^{l} c_{ij}(k)+b_{ij}(k).
\]

Taking a representative for $f'$ and $f$ in the generic fiber, we see
that\footnote{We remark that the picture of the intersection of $f$
and $\Gamma$ in the diagram is misleading. The curve $f$ should meet
$\Gamma$ twice in each of $D_{12},D_{22},D_{32}$ for a total of
$N=6$. }
\[
f'\cdot \Gamma =1,\quad \quad f\cdot \Gamma =N.
\]
Combining, we get
\begin{align*}
1&=f'\cdot \Gamma \\
&=\sum_{j=1}^{l}\left(c_{ij}(k)+b_{ij}(k) \right)\cdot \Gamma \\
&=l-(l-1)+b_{il}(k)\cdot \Gamma 
\end{align*}
which implies
\[
b_{il}(k)\cdot \Gamma =0
\]
for any $i$ and $k$.

We note that $a_{ij}(k)\cdot \Gamma$ is independent of $i$ since the
action of the first factor of $G_{N}\times G_{N}$ preserves $\Gamma$
and permutes the $i$ indices of $a_{ij}(k)$. Then for any $j$ and $k$
we have
\begin{align*}
N&=f\cdot \Gamma \\
&=\sum_{i=1}^{k}\left(c_{ij}(k) +a_{ij}(k) \right)\cdot \Gamma \\
&=k+ka_{ij}(k)\cdot \Gamma 
\end{align*}
where the last equality holds for all $i,j,k$. Thus we get
\[
a_{ij}(k)\cdot \Gamma = l-1.
\]

We summarize these results in the following
\begin{lemma}\label{lem: intersection matrix of aij,bij,cij with S,S',Gamma}
The intersection numbers of the fiber curve classes
$a_{ij}(k),b_{ij}(k),c_{ij}(k)$ with the divisors $S,S',\Gamma$ are
given by the following table:
\begin{center}
\begin{tabular}{cccc}
$\epsilon $  &	$S'\cdot \epsilon $&	$S\cdot \epsilon $&	$\Gamma\cdot \epsilon  $\\ \hline
$a_{ij}(k)$&   $\left\{\begin{smallmatrix} 1&\text{if
$i=k$}\\0&\text{if $i\neq k$} \end{smallmatrix}
\right\}$& $0$&		 $l-1$\\
$b_{ij}(k)$&$0$ &	   $\left\{\begin{smallmatrix} 1&\text{if
$j=l$}\\0&\text{if $j\neq l$} \end{smallmatrix}
\right\}$&  $\left\{\begin{smallmatrix} 0&\text{if
$j=l$}\\-1 &\text{if $j\neq l$} \end{smallmatrix}
\right\}$\\
$c_{ij}(k)$&	$0$ &	$0$ &	$1$
\end{tabular}
\end{center}
\end{lemma}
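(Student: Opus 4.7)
The plan is to handle the lemma row by row, separating routine transversality computations from the global statements about $\Gamma$. The table entries split naturally into three groups: the $S$ and $S'$ columns, which are pure local geometry; the two entries $\Gamma \cdot c_{ij}(k)$ and $\Gamma \cdot b_{ij}(k)$ for $j \neq l$, which are already in hand from Section~\ref{sec: local toric geometry}; and the remaining two entries $\Gamma \cdot a_{ij}(k)$ and $\Gamma \cdot b_{il}(k)$, which require a genuinely global input.

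For the $S$ and $S'$ columns, I would argue as follows. Both $S$ and $S'$ are sections of the Abelian surface fibration $X_{N} \to \PP^{1}$, so their intersection with each singular fiber $F(k)$ is a single smooth point corresponding to the identity of the group scheme. In the $I_{k} \times I_{l}$ picture these are the points $(0_{I_{k}}, 0_{I_{l}})$, lying in the smooth locus of the fiber and, by the stated convention, on the component $D_{kl}(k)$. Consequently $S$ and $S'$ are disjoint from every exceptional curve $c_{ij}(k)$, while $a_{ij}(k)$ meets $S'$ only when $a_{i}$ is the component of $I_{k}$ through $0_{I_{k}}$ (namely $i = k$) and is always disjoint from $S$ because the node $n_{j} \in I_{l}$ is distinct from the smooth point $0_{I_{l}}$. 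The analogous analysis for $b_{ij}(k)$ gives the remaining nonzero $S$-entry.

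For the $\Gamma$ column, the values $\Gamma \cdot c_{ij}(k) = 1$ and $\Gamma \cdot b_{ij}(k) = -1$ for $j \neq l$ follow immediately from Proposition~\ref{prop: Blowup of Gamma is smooth, intersection numbers of Gamma with exceptional curves}, since the $c_{ij}(k)$ are by construction the exceptional curves of $X_{N}\to X_{\con}$ and the $b_{ij}(k)$ for $j \neq l$ are the proper transforms of the exceptional curves of $X_{\con} \to X_{\sing}$. For the two remaining entries I would compute globally using the elliptic fiber classes $f = S \cdot F$ and $f' = S' \cdot F$: on the generic smooth fiber $\Gamma$ restricts to the graph of the degree-$N$ isogeny $E \to E/G_{N}$, giving $f \cdot \Gamma = N$ and $f' \cdot \Gamma = 1$. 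I would then expand $f$ and $f'$ inside a singular fiber $F(k)$ by choosing a representative through a node rather than through the zero, obtaining $f = \sum_{i}(c_{ij}(k) + a_{ij}(k))$ and $f' = \sum_{j}(c_{ij}(k) + b_{ij}(k))$, and substitute the already-established intersection numbers with $c$ and with $b|_{j \neq l}$. To reduce a single sum to a single unknown I would use that the first factor of $G_{N} \times G_{N}$ acts on $X_{N}$ preserving $\Gamma$ while cyclically permuting the $a_{ij}(k)$ in the index $i$, forcing $\Gamma \cdot a_{ij}(k)$ to be independent of $i$.

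The main obstacle, namely the singular, non-normal, non-toric nature of $\Gamma$, has in effect already been absorbed by the preceding section's deformation argument, which replaces $\Gamma$ locally by the toric divisor $D_{0}''$ in order to access the intersection numbers $\Gamma \cdot c$ and $\Gamma \cdot b$. Once those two local values are in hand, the present lemma reduces to bookkeeping: it combines two local toric inputs with the two global identities $f \cdot \Gamma = N$ and $f' \cdot \Gamma = 1$, and uses the $G_{N}$-symmetry to collapse the resulting linear equations to a unique solution.
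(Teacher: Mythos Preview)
Your overall strategy is exactly the one the paper uses: first read off the $S$ and $S'$ columns from disjointness/transversality, then quote Proposition~\ref{prop: Blowup of Gamma is smooth, intersection numbers of Gamma with exceptional curves} for $\Gamma\cdot c_{ij}(k)$ and $\Gamma\cdot b_{ij}(k)$ with $j\neq l$, and finally solve for the two remaining $\Gamma$-entries by expanding $f=\sum_i(c_{ij}(k)+a_{ij}(k))$ and $f'=\sum_j(c_{ij}(k)+b_{ij}(k))$, using $f\cdot\Gamma=N$, $f'\cdot\Gamma=1$, and the action of the first factor of $G_N\times G_N$ to force $\Gamma\cdot a_{ij}(k)$ to be independent of $i$.

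There is one genuine mis-statement in your first paragraph, though. The divisors $S=S\times_{\PP^1}\{0\}$ and $S'=\{0\}\times_{\PP^1}S'$ are \emph{not} sections of the Abelian surface fibration; they are elliptic-surface divisors whose restrictions to the singular fiber $F(k)$ are the curves $I_k\times\{0\}$ and $\{0\}\times I_l$, not the single point $(0_{I_k},0_{I_l})$. This matters: if $S'\cap F(k)$ were just the point $(0_{I_k},0_{I_l})$ as you write, then $a_{ij}(k)$, being the proper transform of $a_i\times\{n_j\}$ with $n_j\neq 0_{I_l}$, would miss $S'$ for \emph{every} $i$, and you could not get $S'\cdot a_{kj}(k)=1$. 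Your next sentence (``$a_{ij}(k)$ meets $S'$ only when $a_i$ is the component of $I_k$ through $0_{I_k}$'') is in fact the correct computation, but it tacitly uses the correct description $S'\cap F(k)=\{0\}\times I_l$ rather than the one you stated. Once you replace ``sections meeting $F(k)$ in a point'' by ``elliptic-surface divisors meeting $F(k)$ in the curves $I_k\times\{0\}$ and $\{0\}\times I_l$,'' the rest of your argument is correct and coincides with the paper's.
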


We are primarily interested in curve and divisor classes on $\XtildeN
=X_{N}/G_{N}$. Let
\[
\pi :X_{N}\to \XtildeN 
\]
be the quotient map. We note that for each $k$, $G_{N}$ acts simply
transitively on the sets $\{a_{ij}(k) \},\{b_{ij}(k) \},\{c_{ij}(k)
\}$, and $\{D_{ij}(k) \}$. In particular the classes
\[
\tilde{a}(k) = \pi_{*}(a_{ij}(k)),\quad  \tilde{b}(k) = \pi_{*}(b_{ij}(k)),\quad  \tilde{c}(k) = \pi_{*}(c_{ij}(k))
\]
are independent of the choice of $i$ and $j$. We also define
\[
\Gammatilde =\pi_{*}(\Gamma ),\quad  \Stilde =\pi_{*}(S),\quad  \Sprimetilde =\pi_{*}(S')
\]
and finally we define $\Ftilde$ to be the class of the fiber of
$\XtildeN \to \PP^{1}$ so that (in slight notational conflict with the
other tilded classes)
\[
\Ftilde =\tfrac{1}{N}\pi_{*}(F).
\]

The divisor classes $\{\Ftilde ,\Stilde ,\Sprimetilde ,\Gammatilde \}$
span $\Pic (\XtildeN )$ and to compute their intersection numbers with
the fiber curve classes $\atilde(k)$, $\btilde(k)$, $\ctilde(k)$ we
use the following.

If $D$ is any divisor class on $X_{N}$, $\epsilontilde$ is a curve
class on $\XtildeN $, and $\widetilde{D}=\pi_{*}(D)$ then
\begin{align*}
\widetilde{D}\cdot \epsilontilde  &= \pi_{*}(D)\cdot \epsilontilde \\
&= \pi_{*}(D\cdot \pi^{*}\epsilontilde )\\
&= D\cdot \pi^{*}\epsilontilde 
\end{align*}
so to compute $\Stilde \cdot \atilde (k)$ for example, we need to
compute
\[
S\cdot \pi^{*}(\atilde (k)) = S\cdot \sum_{i,j}a_{ij}(k)
\]
and this can be read off from the table in Lemma~\ref{lem:
intersection matrix of aij,bij,cij with S,S',Gamma}. Carrying this out
we get the following table of intersections on $\XtildeN$.
\begin{center}
\begin{tabular}{cccc}
$\epsilontilde$&	$\Sprimetilde \cdot \epsilontilde$&	$\Stilde \cdot \epsilontilde$&	$\Gammatilde \cdot \epsilontilde$\\
\hline
$\atilde (k)$&	$l$& $0$&	$N(l-1)$\\	
$\btilde (k)$&	$0$& $k$&	$-k(l-1)$\\	
$\ctilde (k)$&	$0$& $0$&	$N$\\	
\end{tabular}
\end{center}

Geometrically, the classes $\atilde (k),\btilde (k),\ctilde (k)$ are
represented by the three banana curves in the fiber $\Ftilde (k) =
F(k)/G_{N}$.  It will be convenient to consider a slightly different
basis of curve and divisor classes. Namely, consider the curve classes
\[
\atilde (k)+\ctilde (k),\quad \btilde (k)+\ctilde (k), \quad \ctilde (k)
\]
and the divisor class
\[
\Deltatilde =\Gammatilde -N\Sprimetilde -\Stilde .
\]

\begin{lemma}\label{lem: curve divisor pairings in the diagonal basis}
The intersection numbers for the above classes are given in the following:
\begin{center}
\begin{tabular}{cccc}
$\epsilontilde$&	$\Sprimetilde \cdot \epsilontilde$&	$\Stilde \cdot \epsilontilde$&	$\Deltatilde \cdot \epsilontilde$\\
\hline
$\atilde (k)+\ctilde (k)$&	$l$& $0$&	$0$\\	
$\btilde (k)+\ctilde (k)$&	$0$& $k$&	$0$\\	
$\ctilde (k)$&	$0$& $0$&	$N$\\
\end{tabular}
\end{center}
\end{lemma}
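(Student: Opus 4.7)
The plan is to observe that every class appearing in the target table is an explicit integer linear combination of classes whose pairings have already been tabulated in the previous lemma on $\XtildeN$. So the proof reduces to bilinearity of the intersection pairing together with direct substitution; the only arithmetic input is the relation $N = kl$.

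First I would copy down the previously established table of pairings of $\Sprimetilde, \Stilde, \Gammatilde$ against $\atilde(k), \btilde(k), \ctilde(k)$, and then handle the two columns $\Sprimetilde \cdot -$ and $\Stilde \cdot -$ immediately. For these there is nothing to do beyond adding two entries from the previous table: $\Sprimetilde \cdot (\atilde(k)+\ctilde(k)) = l + 0 = l$, $\Stilde \cdot (\btilde(k)+\ctilde(k)) = k + 0 = k$, and the remaining entries in these two columns are $0$ because each of $\atilde(k), \btilde(k), \ctilde(k)$ pairs to $0$ with at least one of $\Sprimetilde$, $\Stilde$.

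The substantive column is $\Deltatilde \cdot -$, and here I would substitute the definition $\Deltatilde = \Gammatilde - N\Sprimetilde - \Stilde$ and expand linearly. For example
\[
\Deltatilde \cdot \bigl(\atilde(k)+\ctilde(k)\bigr) = \bigl(N(l-1)+N\bigr) - N\cdot l - 0 = 0,
\]
\[
\Deltatilde \cdot \bigl(\btilde(k)+\ctilde(k)\bigr) = \bigl(-k(l-1)+N\bigr) - 0 - k = N - kl = 0,
\]
and
\[
\Deltatilde \cdot \ctilde(k) = N - N\cdot 0 - 0 = N,
\]
using $N = kl$ in the middle line. The cancellations are exactly what motivates the change of basis: $\Deltatilde$ is designed so that it is orthogonal to the two ``diagonal'' banana curve classes $\atilde(k)+\ctilde(k)$ and $\btilde(k)+\ctilde(k)$ in every singular fiber.

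The main thing to be careful about is simply bookkeeping: there is no real obstacle here, since the calculation is formal once Lemma \ref{lem: intersection matrix of aij,bij,cij with S,S',Gamma} and the quotient identity $\widetilde{D}\cdot \epsilontilde = D\cdot \pi^{*}\epsilontilde$ have been used to produce the $\Gammatilde$-column on $\XtildeN$. I would present the proof as a one-line appeal to bilinearity together with the three displayed computations above (and the analogous, trivially vanishing cases).
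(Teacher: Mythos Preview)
Your proposal is correct and matches the paper's approach exactly: the paper states the lemma without an explicit proof, treating it as an immediate consequence of the preceding intersection table and the definition $\Deltatilde = \Gammatilde - N\Sprimetilde - \Stilde$, which is precisely the bilinear substitution you carry out. Your displayed computations are all accurate, including the key use of $N = kl$.
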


\bigskip

\subsection{The intersection form of a smooth fiber.}

The divisor classes 
\[
\Gamma ,\quad S,\quad S',\quad \Delta =\Gamma -NS'-S
\]
on $X_{N}$ and the corresponding classes
\[
\Gammatilde ,\quad \Stilde ,\quad \Sprimetilde ,\quad \Deltatilde 
\]
on $\XtildeN$ restrict to a fiber to give curve classes
\[
\gamma =\Gamma \cdot F,\quad f=S\cdot F,\quad f'=S'\cdot F,\quad \delta =\Delta \cdot F
\]
on $X_{N}$, and correspondingly
\[
\gammatilde =\Gammatilde  \cdot \Ftilde ,\quad \ftilde =\Stilde \cdot \Ftilde,\quad
\fprimetilde =\Sprimetilde \cdot \Ftilde,\quad \deltatilde  =\Deltatilde  \cdot \Ftilde
\]
on $\XtildeN$.

Viewed as divisor classes on a smooth fiber $E\times E'\subset X_{N}$
or $(E\times E')/G_{N}\subset \XtildeN$, these classes are endowed
with an intersection form.  Geometrically, we may explicitly write
these cycles:
\[
f=\{(x,0)\in E\times E' \},\quad f'=\{(0,x')\in E\times E' \},\quad
\gamma =\{(x,g(x))\in E\times E' \} 
\]
where $g:E\to E'$ is the quotient map.

The self-intersection of $f,f',\gamma$ are all zero since translation
by a generic element in $E\times E'$ creates a disjoint homologous
cycle. The remaining intersections are transverse and easily counted
using the above descriptions. They are given by
\[
f\cdot f'=1,\quad f\cdot \gamma =N,\quad f'\cdot \gamma =1.
\]

It follows then for $\delta =\gamma -Nf'-f$ that
\[
\delta \cdot f=0,\quad  \delta \cdot f'=0,\quad \delta \cdot \delta =-2N
\]
so in the basis $\left\langle \delta ,f,f' \right\rangle$ for $\Pic
(E\times E')$ the intersection form is
\[
\begin{pmatrix}
-2N&0&0\\
0&0&1\\
0&1&0
  \end{pmatrix}.
\]

All classes $\alpha \in \{f,f',\gamma \}$ have the property that 
\begin{align*}
\pi^{*}\pi_{*}\alpha & = \text{$G_{N}$ orbit of $\alpha$}\\
& = N\alpha 
\end{align*}
so for $\widetilde{\alpha},\widetilde{\beta }\in \{\ftilde
,\fprimetilde ,\gammatilde ,\deltatilde \}$ we have 
\[
\widetilde{\alpha}\cdot \widetilde{\beta} = N\alpha \cdot \beta 
\]
since
\begin{align*}
\alpha \cdot \beta &= \pi_{*}(\alpha \cdot \beta )\\
&=\tfrac{1}{N}\pi_{*}(\pi^{*}\pi_{*}\alpha \cdot \beta )\\
&=\tfrac{1}{N}(\pi_{*}\alpha \cdot\pi_{*} \beta )\\
&=\tfrac{1}{N}\widetilde{\alpha}\cdot \widetilde{\beta}.
\end{align*}
Thus we've proved the following
\begin{lemma}\label{lem: intersection form on smooth fiber of XtildeN}
The classes $\deltatilde ,\ftilde ,\fprimetilde$ given by the
restriction of the divisor classes $\Deltatilde ,\Stilde
,\Sprimetilde$ in $\XtildeN$ to a smooth fiber have intersection
pairing given by the matrix
\[
\begin{pmatrix}
-2N^{2} &	0 &	0 \\
0       & 	0 &	N \\
0 	& 	N &	0    	
\end{pmatrix}.
\]
\end{lemma}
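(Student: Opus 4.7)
The plan is to deduce this lemma as a direct consequence of the two computations carried out immediately before its statement. Specifically, combining the intersection matrix of $\langle \delta, f, f' \rangle$ on the smooth fiber $E \times E' \subset X_N$ (already computed to be $\left(\begin{smallmatrix} -2N & 0 & 0 \\ 0 & 0 & 1 \\ 0 & 1 & 0 \end{smallmatrix}\right)$) with the scaling identity $\widetilde{\alpha} \cdot \widetilde{\beta} = N (\alpha \cdot \beta)$ that was established for all $\widetilde{\alpha}, \widetilde{\beta} \in \{\ftilde, \fprimetilde, \gammatilde, \deltatilde\}$, the desired matrix is simply $N$ times the one above.

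First I would note that since $\delta = \gamma - Nf' - f$ and $\pi_*$ is linear, $\deltatilde = \gammatilde - N\fprimetilde - \ftilde$, so $\deltatilde$ lies in the span of classes for which the scaling identity has been verified (it is enough that the identity extends by bilinearity to integer combinations, which follows from the pushforward-pullback projection formula used in the excerpt). Hence the intersection pairing on $\{\deltatilde, \ftilde, \fprimetilde\}$, viewed as divisor classes on the smooth fiber $(E \times E')/G_N \subset \XtildeN$, is obtained by multiplying each entry of the matrix in the basis $\{\delta, f, f'\}$ by $N$.

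Carrying this out entry-wise, I would get
\[
\deltatilde \cdot \deltatilde = N(-2N) = -2N^2, \qquad \ftilde \cdot \fprimetilde = N \cdot 1 = N,
\]
with the remaining entries equal to zero, which is exactly the claimed matrix.

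There is essentially no obstacle here beyond bookkeeping; the substantive content has already been proved in the preceding paragraphs (the geometric identification of $f, f', \gamma$ as honest cycles on $E \times E'$, the transverse intersection count giving $f \cdot \gamma = N$ and $f' \cdot \gamma = 1$, and the projection formula yielding the factor of $N$ under pushforward). The only thing to check carefully is that the scaling identity $\widetilde{\alpha} \cdot \widetilde{\beta} = N \alpha \cdot \beta$ is being applied to classes \emph{on the smooth fiber} rather than on the ambient threefold, but since the restriction to a smooth fiber commutes with the free quotient by $G_N$ (which acts by translation on $E \times E'$), this is immediate.
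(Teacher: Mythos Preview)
Your proposal is correct and follows essentially the same approach as the paper: the paper itself writes ``Thus we've proved the following'' immediately before the lemma, indicating that the proof is precisely the combination of the two preceding computations (the intersection matrix for $\langle \delta, f, f'\rangle$ on $E\times E'$ and the projection-formula identity $\widetilde{\alpha}\cdot\widetilde{\beta}=N\,\alpha\cdot\beta$) that you describe. Your remark that the scaling identity extends to $\deltatilde$ by linearity is a small but welcome clarification of a step the paper leaves implicit.
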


It is also useful to write the classes of the banana curves $\atilde
(k),\btilde (k),\ctilde (k)\subset \Ftilde (k)$ in terms of the curve
classes $\deltatilde ,\ftilde ,\fprimetilde$.
\begin{lemma}\label{lem: atilde,btilde,ctilde in terms of
ftilde,fprimetilde, deltatilde}
The following equations of curve classes hold
\begin{align*}
\atilde (k)+\ctilde (k) &= \frac{1}{k}\ftilde \\
\btilde (k)+\ctilde (k) &= \frac{1}{l}\fprimetilde \\
\ctilde (k) &= -\frac{1}{2N}\deltatilde .
\end{align*}
\end{lemma}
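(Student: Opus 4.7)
My plan is to verify each of the three identities as an equality of classes in $H_2(\XtildeN, \QQ)$ by comparing intersection numbers against the divisors $\Stilde, \Sprimetilde, \Deltatilde$. Since $\XtildeN$ is a Calabi--Yau threefold with $h^{2,1}(\XtildeN) = 0$, the Hodge decomposition gives $H^2(\XtildeN, \CC) = H^{1,1}(\XtildeN) = \Pic(\XtildeN) \otimes \CC$, and Poincar\'e duality makes the natural pairing $H_2(\XtildeN, \QQ) \otimes \Pic(\XtildeN)_{\QQ} \to \QQ$ perfect. Consequently every class in $H_2(\XtildeN, \QQ)$ is determined by its intersection numbers with the basis $\{\Ftilde, \Stilde, \Sprimetilde, \Deltatilde\}$ of $\Pic(\XtildeN)_{\QQ}$. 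Since any fiber curve class pairs trivially with $\Ftilde$ (as $\Ftilde^2 = 0$ on $\XtildeN$), it suffices to check equality of the three pairings with $\Stilde$, $\Sprimetilde$, and $\Deltatilde$.

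For the left hand sides of the three claimed identities, the pairings with $\Stilde, \Sprimetilde, \Deltatilde$ are exactly the entries of the table in Lemma~\ref{lem: curve divisor pairings in the diagonal basis}. For the right hand sides I would compute the pairings of $\ftilde, \fprimetilde, \deltatilde$ with $\Stilde, \Sprimetilde, \Deltatilde$ by using the intersection form of Lemma~\ref{lem: intersection form on smooth fiber of XtildeN}. The key observation is that by the definitions $\ftilde = \Stilde \cdot \Ftilde$, $\fprimetilde = \Sprimetilde \cdot \Ftilde$, and $\deltatilde = \Deltatilde \cdot \Ftilde$, the classes $\ftilde, \fprimetilde, \deltatilde$ are represented by curves lying in a smooth fiber $F$, and their classes on $F$ coincide with the restrictions $\Stilde|_F, \Sprimetilde|_F, \Deltatilde|_F$. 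Therefore for any fiber curve class $\alpha$ represented in $F$ and any divisor $D \in \{\Stilde, \Sprimetilde, \Deltatilde\}$ one has $\alpha \cdot_{\XtildeN} D = \alpha \cdot_F (D|_F)$, and the required nine pairings can be read directly off the intersection matrix of Lemma~\ref{lem: intersection form on smooth fiber of XtildeN}.

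With both tables in hand each identity reduces to a direct numerical comparison; using $N = kl$ the scalars $1/k$, $1/l$, and $-1/(2N)$ drop out as the unique choices making the pairings agree. For example $\atilde(k) + \ctilde(k)$ pairs with $(\Stilde, \Sprimetilde, \Deltatilde)$ as $(0, l, 0)$, while $\tfrac{1}{k}\ftilde$ pairs as $\tfrac{1}{k}(0, N, 0) = (0, l, 0)$; the other two identities follow analogously.

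I do not expect a substantive obstacle: the real content is the Poincar\'e-duality reduction and the identification $D \cdot \Ftilde = D|_F$, after which the proof is essentially an exercise in combining the two intersection tables already available.
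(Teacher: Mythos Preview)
Your proposal is correct and follows essentially the same route as the paper's proof: both reduce the claim to comparing pairings with $\Stilde,\Sprimetilde,\Deltatilde$, read the left-hand sides off Lemma~\ref{lem: curve divisor pairings in the diagonal basis}, and compute the right-hand sides via the fiber intersection matrix of Lemma~\ref{lem: intersection form on smooth fiber of XtildeN}. The only difference is that you spell out the Poincar\'e-duality justification for why these three pairings suffice, whereas the paper simply asserts ``it suffices'' and proceeds.
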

\begin{proof}
It suffices to show that both sides of each equation have the same
pairing with the divisors $\Sprimetilde ,\Stilde ,$ and $\Deltatilde
$. The pairings of $\ftilde ,\fprimetilde ,\deltatilde$ with $\Stilde
,\Sprimetilde ,\Deltatilde$ are given by the intersection pairing in
Lemma~\ref{lem: intersection form on smooth fiber of XtildeN}. In
particular, the only non-zero pairings are
\begin{align*}
\frac{1}{k} \ftilde \cdot \Sprimetilde &= \frac{N}{k}=l\\
\frac{1}{l} \fprimetilde \cdot \Stilde &= \frac{N}{l}=k\\
-\frac{1}{2N} \deltatilde \cdot \Deltatilde &= -\frac{1}{2N}(-2N^{2})
= N. 
\end{align*}
The pairing with the classes on the left are given by Lemma~\ref{lem:
curve divisor pairings in the diagonal basis} which are the same.
\end{proof}

\section{Computation of the enumerative invariants of
$\XtildeN$}\label{sec: computation of DT/GW/GV of XN}

\subsection{Review of the partition function of Banana manifolds} The
fiber curve DT partition function of the generic banana manifold
$X_{\ban}$ was computed in \cite{Bryan-Banana} with a combination of
motivic and topological vertex methods. The method works for any other
CY3 of banana type including $\XtildeN$. What is needed is a CY3 $X$
with an Abelian surface fibration $X\to \PP^{1}$ whose singular fibers
$F_{1},\dotsc ,F_{r}$ are banana fibers, so that in particular the
formal neighborhoods $\Fhat_{i}$ are isomorphic to $\Fhat_{\ban}$ see
\cite[\S~4]{Bryan-Banana}. The result is that the fiber curve DT
partition function is a product over the contributions from singular
fibers
\[
Z_{X} = \prod_{i=1}^{r} Z_{\Fhat_{i}}
\]
where each $Z_{\Fhat_{i}}$ is, up to a change of variables, given by
\begin{align*}
Z_{\Fhat_{\ban}}(p,Q_{1},Q_{2},Q_{3}) &= \sum_{\dvec,m}
\DT_{m,\beta_{\dvec}} (-p)^{m}Q_{1}^{d_{1}}Q_{2}^{d_{2}}Q_{3}^{d_{3}} \\
&= \prod_{\dvec ,m} (1-p^{m}Q_{1}^{d_{1}}Q_{2}^{d_{2}}Q_{3}^{d_{3}})^{-c(\dvec^{2},m)}
\end{align*}
where if $a,b,c$ are the banana curves in $\Fhat_{\ban}$,
\[
\beta_{\dvec} = d_{1}a+d_{2}b+d_{3}c.
\]
Moreover, the coefficients $c(\dvec^{2},m)$ are defined as in Equation~\eqref{eqn:
definition of the coefficientsc(a,m)} and $\dvec^{2}$ is by
definition
\[
\dvec^{2} = 2d_{1}d_{2}+ 2d_{2}d_{3}+ 2d_{3}d_{1} - d_{1}^{2} - d_{2}^{2} - d_{3}^{2}.
\]
Finally, in the above product we require $d_{1},d_{2},d_{3}\geq 0$
and if $d_{1}=d_{2}=d_{3}=0$ we require $m>0$.

\subsection{The DT partition function of $\XtildeN$}
In the case of $\XtildeN$, we have four singular fibers, indexed by
$k\in \Theta_{N}$ given by 
\[
\Fhat (k)/G_{N} \cong \Fhat_{\ban}
\]
and containing banana curves $\atilde (k),\btilde (k),\ctilde
(k)$. Therefore
\[
Z_{\XtildeN} = \prod_{k\in \Theta_{N}} Z_{\Fhat (k)/G_{N}}
\]
and $ Z_{\Fhat (k)/G_{N}}=Z_{\Fhat_{\ban}}$ up to a change of
variables which we determine as follows.

The contribution of a class
\[
\betatildesubd =d_{1}\atilde (k)+d_{2}\btilde (k)+d_{3}\ctilde (k)
\]
to the DT partition function contributes by definition to the
coefficient of the monomial
\[
y^{\betatildesubd \cdot \Deltatilde} \, q^{\betatildesubd \cdot
\Sprimetilde}\,Q^{\betatildesubd \cdot \Stilde}. 
\]
By writing
\[
\betatildesubd  = d_{1}(\atilde (k)+\ctilde (k))+ d_{2}(\btilde
(k)+\ctilde (k)) +(d_{3}-d_{1}-d_{2})\ctilde (k) 
\]
we can easily rewrite the above monomial using the table in
Lemma~\ref{lem: curve divisor pairings in the diagonal basis}. It is
given by
\begin{align*}
y^{\betatildesubd \cdot \Deltatilde} \cdot  q^{\betatildesubd \cdot
\Sprimetilde} \cdot Q^{\betatildesubd \cdot \Stilde} &=
y^{N(d_{3}-d_{1}-d_{2})} \cdot q^{ld_{1}}\cdot Q^{kd_{2}} \\
&= \left(q^{l}y^{-N} \right)^{d_{1}} \left(Q^{k}y^{-N} \right)^{d_{2}}
\left(y^{N} \right)^{d_{3}}
\end{align*}
where recall that $l=N/k$. So
\[
Z_{\Fhat (k)/G_{N}}(p,y,q,Q) = Z_{\Fhat_{\ban}}(p,Q_{1},Q_{2},Q_{3})
\]
where
\begin{equation}\label{eqn: Qi to q,Q,y change of vars}
Q_{1} = q^{l}y^{-N},\quad Q_{2} = Q^{k}y^{-N},\quad Q_{3} = y^{N}.
\end{equation}

Combining, we've shown that
\[
Z_{\XtildeN}(p,y,q,Q) = \prod_{k\in \Theta_{N}} \prod_{\dvec ,m}
\left(1-p^{m}y^{N(d_{3}-d_{1}-d_{2})}q^{ld_{1}}Q^{kd_{2}}
\right)^{-c(\dvec^{2},m)}. 
\]
Letting
\[
r=d_{1},\quad s=d_{2},\quad t=d_{3}-d_{1}-d_{2}
\]
and observing that 
\[
\dvec^{2} = 4 rs-t^{2}
\]
we then have the formula for $Z_{\XtildeN}$ as stated in
Theorem~\ref{thm: ZDT formula} and thus have concluded its proof.

\subsection{GW potentials of $\XtildeN$}\label{subsection: GW
potentials of XtildeN} In this section we prove Corollary~\ref{cor:
formula for F_g}. We begin by computing the genus $g$, fiber curve, GW
potential of $\XtildeN$:
\[
F_{g}^{\XtildeN}(Q,q,y) = \sum_{\begin{smallmatrix} \beta \in H_{2}(\XtildeN)\\
\pi_{*}\beta =0 \end{smallmatrix} } \left\langle \, \,
\right\rangle^{\XtildeN}_{g,\beta}\, \, Q^{\beta \cdot
\Stilde}q^{\beta \cdot \Sprimetilde}y^{\beta \cdot \Deltatilde}.
\]

Our calculation follows closely the computation in
\cite[App~A]{Bryan-Banana}. The \emph{reduced} GW potential $F'_{g}$
is defined by removing the $\beta =0$ term from $F_{g}^{\XtildeN}$:
\[
F'_{g}(Q,q,y) = F^{\XtildeN}_{g}(Q,q,y)-F^{\XtildeN}_{g}(0,0,0).
\]
The GW/DT correspondence conjectured in \cite{MNOP1} and recently
proven by Pardon in \cite{Pardon-Universal-Curve-Counting-2023}, asserts that
\[
\sum_{g=0}^{\infty}F'_{g}(Q,q,y)\lambda^{2g-2} = \log
\left(\frac{Z_{\XtildeN}(p,y,q,Q)}{Z_{\XtildeN}(p,0,0,0)} \right) 
\]
under the change of variables $p=e^{i\lambda}$. Applying this to the
DT partition function of $\XtildeN$ we get
\begin{align}\label{eqn: F'=logZ}
\sum_{g=0}^{\infty}F'_{g}(Q,q,y)\lambda^{2g-2} &= \log
\left(\prod_{k\in \Theta_{N}} \prod_{m,r,s,t}
(1-p^{m}q^{lr}Q^{ks}y^{Nt})^{-c(4rs-t^{2},m)} \right)\\  \nonumber
&= \sum_{k\in \Theta_{N}} \sum_{m,r,s,t}c(4rs-t^{2},m)
\sum_{n=1}^{\infty} \frac{1}{n}p^{nm} q^{nlr}Q^{nks}y^{nNt}
\end{align}
where the indices $(m,r,s,t)$ in the product and sum are given by
integers satisfying $r,s,r+s+t\geq 0$ and $(r,s,t)\neq (0,0,0)$. Now
$c(d,m)=0$ if $d<-1$ \cite[Prop~14]{Bryan-Banana} from which one can
show that an equivalent indexing condition is given by $r,s\geq 0$ and
$t>0$ if $r=s=0$.

In Appendix A of \cite{Bryan-Banana} it is shown that
\begin{equation}\label{eqn: c_{2g-2}(d) coefs in terms of c(d,m)}
\sum_{g=0}^{\infty} c_{2g-2}(d) \lambda^{2g-2} = \sum_{m\in \ZZ}
c(d,m) e^{im\lambda} 
\end{equation}
where $c_{2g-2}(d)$ is defined by
\[
\psi_{2g-2}(q,y) = \sum_{n=0}^{\infty}\sum_{t\in \ZZ}
c_{2g-2}(4n-t^{2})q^{n}y^{t} 
\]
and where $\psi_{2g-2}(q,y)$ is the weak Jacobi form of weight 2g-2
and index 1 given by
\[
\psi_{2g-2}(q,y) = \phi_{-2,1}(q,y)\cdot \begin{cases}
1&g=0\\
\wp(q,y)&g=1\\
\frac{|B_{2g}|}{2g(2g-2)!} E_{2g}(q) &g>1
\end{cases}
\]
(see \cite[App~A]{Bryan-Banana} for further explination). Applying the
substitution $p=e^{i\lambda}$ to Equation~\eqref{eqn: F'=logZ} and
using Equation~\eqref{eqn: c_{2g-2}(d) coefs in terms of c(d,m)} we find
\[
\sum_{g=0}^{\infty}F'_{g}(Q,q,y)\lambda^{2g-2}  = \sum_{k\in
\Theta_{N}} \sum_{r,s,t} \sum_{n=1}^{\infty} \frac{1}{n} q^{nlr}
Q^{nks}y^{nNt} \sum_{g=0}^{\infty} c_{2g-2}(4rs-t^{2}) n^{2g-2}
\lambda^{2g-2}
\]
so that 
\[
F'_{g}(Q,q,y) = \sum_{k\in \Theta_{N}} \sum_{r,s,t}
c_{2g-2}(4rs-t^{2})\operatorname{Li}_{3-2g}(q^{lr}Q^{ks}y^{Nt}) .
\]
For $g>1$ we can add back in the constant terms using for example
\cite[\S~2.1]{MNOP1} to get
\begin{equation}\label{eqn: formula for F_{g} in terms of Li_{3-2g}}
F^{\XtildeN}_{g}(Q,q,y) = \sum_{k\in \Theta_{N}} \left(
c_{2g-2}(0)\cdot \frac{-B_{2g-2}}{4g-4} + \sum_{r,s,t} c_{2g-2}(4rs-t^{2})
\operatorname{Li}_{3-2g}(Q^{ks}q^{lr}y^{Nt}) \right) .
\end{equation}

Let
\[
F^{\ban}_{g}(Q,q,y) = c_{2g-2}(0)\cdot \frac{-B_{2g-2}}{4g-4} + \sum_{r,s\geq 0} \sum_{\begin{smallmatrix} t\in \ZZ \\
t>0 \text{ if }r=s=0 \end{smallmatrix}} c_{2g-2}(4rs-t^{2})
\operatorname{Li}_{3-2g}(Q^{s}q^{r}y^{t})
\]
It is shown in Appendix A of \cite{Bryan-Banana} that
$F^{\ban}_{g}(Q,q,y)$ is the genus 2 Siegel modular form given by the
Maass lift of $\psi_{2g-2}(q,y)$. Then we can rewrite
Equation~\eqref{eqn: formula for F_{g} in terms of Li_{3-2g}} as
\[
F^{\XtildeN}_{g}(Q,q,y) = \sum_{k\in \Theta_{N}}
F_{g}^{\ban}(Q^{k},q^{l},y^{N})
\]
which completes the proof of Corollary~\ref{cor: formula for F_g}.

\begin{proof}[Proof of Corollary \ref{cor: GW potential of Xtilde is a paramodular form}]

Let us define the function 
\[
\mathcal{F}^{\XtildeN}_{g}(Q, q, y) = \sum_{k \in \Theta_{N}} F^{\ban}_{g}(Q^{Nk}, q^{\frac{N}{k}}, y^{N})
\]
which is clearly related to the GW potential $F^{\XtildeN}_{g}$ through the simple change of variables $Q \mapsto Q^{N}$. We will prove Corollary \ref{cor: GW potential of Xtilde is a paramodular form} by establishing the automorphic properties of $\mathcal{F}^{\XtildeN}_{g}$.

\begin{lemma}
If $F(Q, q, y)$ is a Siegel modular form on $\Sp_{4}(\ZZ)$, then for $N$ and $k$ positive integers, $F(Q^{Nk}, q^{\frac{N}{k}}, y^{N})$ is a Siegel modular form of the same weight for the subgroup 
\[
L_{N,k} = \Sp_{4}(\QQ) \cap
\begin{pmatrix}
\ZZ & k \, \ZZ & \tfrac{k}{N} \, \ZZ & \tfrac{1}{N} \, \ZZ \\
\frac{1}{k} \, \ZZ & \ZZ & \frac{1}{N} \, \ZZ & \frac{1}{Nk} \, \ZZ \\
\frac{N}{k} \, \ZZ  & N \, \ZZ  & \ZZ & \frac{1}{k} \, \ZZ  \\
N \, \ZZ & Nk \, \ZZ & k \, \ZZ & \ZZ 
\end{pmatrix}.
\]
\end{lemma}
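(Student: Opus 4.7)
The plan is to interpret the substitution $(Q,q,y) \mapsto (Q^{Nk}, q^{N/k}, y^N)$ as the Siegel action on $\HH_2$ of an explicit element $g \in \Sp_4(\RR)$, verify that the conjugate $g^{-1}\Sp_4(\ZZ)g$ equals $L_{N,k}$, and then conclude the modularity statement as a formal consequence of the cocycle property of the slash operator.

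To identify $g$, note that in the coordinates $\Omega = \bigl(\begin{smallmatrix}\tau & z \\ z & \sigma\end{smallmatrix}\bigr)$, the substitution scales $\tau \mapsto (N/k)\tau$, $\sigma \mapsto Nk\sigma$, and $z \mapsto Nz$. Setting $M = \diag(\sqrt{N/k}, \sqrt{Nk})$, this is exactly $\Omega \mapsto M\Omega M^T$, which is the Siegel action of
\[
g = \begin{pmatrix} M & 0 \\ 0 & (M^T)^{-1} \end{pmatrix} \in \Sp_4(\RR);
\]
the symplectic relation reduces in block form to $M^T (M^T)^{-1} = I$, which is automatic.

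Since $g$ is diagonal, the conjugation $\gamma \mapsto g^{-1}\gamma g$ rescales the $(i,j)$ entry of any $\gamma$ by the ratio $g_{jj}/g_{ii}$. Tabulating these sixteen ratios from $g = \diag(\sqrt{N/k},\sqrt{Nk},\sqrt{k/N},1/\sqrt{Nk})$ reproduces, entry by entry, the coefficient pattern defining $L_{N,k}$, which yields the inclusion $g^{-1}\Sp_4(\ZZ)g \subseteq L_{N,k}$. The reverse containment is symmetric: the prescribed denominators in the entries of any $h \in L_{N,k}$ are exactly those needed for $ghg^{-1}$ to have integer entries, while symplecticity is preserved automatically. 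Thus $g^{-1}\Sp_4(\ZZ)g = L_{N,k}$.

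To finish, write $G = F|_w g$ for the weight-$w$ slash of $F$ by $g$; it agrees with $F(g \cdot \Omega)$ up to a nonzero scalar coming from $\det(C\Omega+D)$, which in our case is the constant $\det((M^T)^{-1})$. For any $h \in L_{N,k}$ write $gh = \gamma g$ with $\gamma \in \Sp_4(\ZZ)$. The cocycle relation for the slash action combined with $F|_w \gamma = F$ gives
\[
G|_w h \;=\; F|_w(gh) \;=\; F|_w(\gamma g) \;=\; (F|_w\gamma)|_w g \;=\; F|_w g \;=\; G,
\]
which is the claimed modularity of $F(Q^{Nk}, q^{N/k}, y^N)$ of weight $w$ on $L_{N,k}$. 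The only non-automatic step is the entry-by-entry verification that the conjugation scalings match the pattern defining $L_{N,k}$; this is routine bookkeeping, and no analytic obstacles arise because $g$ is a fixed element of $\Sp_4(\RR)$ acting by a holomorphic automorphism of $\HH_2$.
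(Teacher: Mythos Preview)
Your proof is correct and follows essentially the same approach as the paper: both identify the substitution as the action of a diagonal element $g$ and then compute $L_{N,k}$ as the conjugate $g^{-1}\Sp_4(\ZZ)g$. The only cosmetic difference is that the paper takes $g=\diag(N,Nk,k,1)\in\mathrm{GSp}_4(\QQ)$ (a symplectic similitude with multiplier $Nk$) whereas you normalize by $\sqrt{Nk}$ to land in $\Sp_4(\RR)$; since the two differ by a scalar, the conjugate groups coincide, and your choice makes the slash-operator cocycle argument slightly cleaner.
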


\begin{proof}
Recall that $Q=e^{2 \pi i \sigma}$, $q=e^{2 \pi i \tau}$, $y=e^{2 \pi i z}$ where 
$\Omega = 
\left(\begin{smallmatrix} \tau &z\\ 
z&\sigma \end{smallmatrix} \right)$ is an element of the genus $2$ Siegel upper half space $\HH_{2}$. The diagonal matrix
\[
g = \diag(N, Nk, k, 1)
\]
acts on $\HH_{2}$ in the standard way by
\[
\Omega \mapsto g \cdot \Omega \\
=
\begin{pmatrix}
N & 0 \\
0 & Nk 
\end{pmatrix} \Omega
\begin{pmatrix}
k & 0 \\
0 & 1 
\end{pmatrix}^{-1} = 
\begin{pmatrix}
\frac{N}{k}\tau & Nz \\
Nz & Nk \sigma 
\end{pmatrix}.
\]
If $F(\Omega) = F(Q, q, y)$ is a Siegel modular form on $\Sp_{4}(\ZZ)$, then $F(g \cdot \Omega) = F(Q^{Nk}, q^{\frac{N}{k}}, y^{N})$ is a Siegel modular form of the same weight on $\Sp_{4}(\QQ) \cap (g^{-1} \Sp_{4}(\ZZ) g)$. For the particular matrix $g$ above, it is straightforward to verify that
\[
L_{N,k} = \Sp_{4}(\QQ) \cap (g^{-1} \Sp_{4}(\ZZ) g).
\]
\end{proof}

Recalling that $F^{\ban}_{g}(Q, q, y)$ is a Siegel modular form of
weight $2g-2$ on $\Sp_{4}(\ZZ)$, the above lemma implies that
$\mathcal{F}^{\XtildeN}_{g}$ is a Siegel modular form of weight $2g-2$
for the group  
\[
\cap_{k \in \Theta_{N}} L_{N,k}
\]
which one can easily show is exactly the subgroup $P_{N}\subset
\Sp_{4}(\QQ)$ defined by Equation~\eqref{eqn: defn of subgroup
PN}. Finally, we note that by the standard action of $\Sp_{4}(\RR)$ on
$\HH_{2}$, the involution $\iota_{N}$ (defined in equation~\eqref{eqn:
defn of involution iN}) induces the transformation
\[
(Q, q, y) \mapsto (q^{\frac{1}{N}}, Q^{N}, y)
\]
under which $\mathcal{F}^{\XtildeN}_{g}$ is evidently invariant. It
follows that $\mathcal{F}^{\XtildeN}_{g}$ is a Siegel modular form for
the index $2$ normal extension $P^{*}_{N} \subset \Sp_{4}(\RR)$, which
completes the proof of Corollary \ref{cor: GW potential of Xtilde is a
paramodular form}.

\end{proof}

\subsection{GV invariants of $\XtildeN$}

In \cite{Bryan-Banana} it is shown that if $a,b,c\subset \Fhat_{\ban}$
are banana curves the the GV invariants $n^{g}_{\beta}(\Fhat_{\ban})$
of an effective class 
\[
\beta =d_{1}a + d_{2} b + d_{3} c
\]
only depend on $g$ and the quantity
\[
a= \dvec^{2} = 2d_{1}d_{2}+ 2d_{2}d_{3}+ 2d_{3}d_{1} - d_{1}^{2} - d_{2}^{2} - d_{3}^{2}
\]
and then $n^{g}_{\beta}(\Fhat_{\ban})=n^{g}_{a}$ where the integers
$n^{g}_{a}$ are given by the formula
\[
\sum_{a=-1}^{\infty}\sum_{g=0}^{\infty} n_{a}^{g}\,
(y^{\half}+y^{-\half})^{2g}q^{a+1} =  \prod_{n=1}^{\infty}
\frac{(1+yq^{2n-1})(1+y^{-1}q^{2n-1})
(1-q^{2n})}{(1+yq^{4n})^{2}(1+y^{-1}q^{4n})^{2}(1-q^{4n})^{2} }.
\]

Now consider an effective curve class on $\Fhat (k)\subset \XtildeN$
given by 
\[
\beta  = d_{1}\atilde (k)+ d_{2}\btilde (k)+ d_{3}\ctilde (k).
\]
Recall that we have a quadratic form $||\cdot ||$ on fiber curve
classes induced from the intersection form on a smooth fiber. We
compute $||\beta ||$ as follows:
\begin{align*}
||\beta || &= ||d_{1}(\atilde(k) +\ctilde(k) )+d_{2}(\btilde(k) +\ctilde(k) )+ (d_{3}-d_{1}-d_{2})\ctilde (k)||\\
&= ||d_{1}\cdot \frac{1}{k}\cdot \ftilde + d_{2}\cdot \frac{1}{l}\cdot
\fprimetilde + (d_{3}-d_{1}-d_{2})\cdot \frac{-1}{2N}\cdot \deltatilde ||\\
&= 2d_{1}d_{2}\cdot \frac{1}{kl}\cdot N + (d_{3}-d_{1}-d_{2})^{2}\cdot
\frac{1}{4N^{2}}\cdot (-2N^{2}) \\
&=\frac{1}{2}\left(4d_{1}d_{2}-(d_{3}-d_{1}-d_{2})^{2} \right)\\
&=\frac{1}{2}\left( 2d_{1}d_{2}+ 2d_{2}d_{3}+ 2d_{3}d_{1} - d_{1}^{2}
- d_{2}^{2} - d_{3}^{2} \right) .
\end{align*}
And thus if we let $a=2||\beta ||$ then $n^{g}_{\beta}(\Fhat
(k))=n^{g}_{a}$.

Then for a general effective fiber curve class $\beta$ on $\XtildeN$
with $a=2||\beta ||$ we have
\begin{equation}\label{eqn: ngbeta = epsilonNbeta * nga}
n^{g}_{\beta}(\XtildeN ) = \epsilon_{N}(\beta ) n^{g}_{a}
\end{equation}
where $\epsilon_{N}(\beta )$ is the number of singular fibers
$F(k)\subset \XtildeN$ on which $\beta$ is represented by an integral class.

For example the class
\begin{align*}
\atilde (N) + \ctilde (N) &= \frac{1}{N}\cdot \ftilde \\
&= \frac{k}{N} (\atilde (k) + \ctilde (k))
\end{align*}
is represented by an effective curve in $F(N)$, but is not represented
by a curve in $F(k)$ when $k\neq N$, so in this case, $\epsilon_{N}\left(\atilde
(N)+\ctilde (N) \right)=1$.

\begin{lemma}\label{lem: formula for epsilonN(beta)}
\[
\epsilon_{N}(\beta ) = \sum_{k\in \Theta_{N}} \epsilon_{N,k}(\beta )
\]
where
\[
\epsilon_{N,k}(\beta ) = \begin{cases}
1&\text{if $k\divides (\beta \cdot \Stilde)$ and $l\divides (\beta \cdot \Sprimetilde)$}\\
0&\text{otherwise.}
\end{cases}
\]
\end{lemma}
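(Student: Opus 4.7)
The plan is to characterize, for each $k \in \Theta_{N}$, the integral lattice $L(k)$ in the $\QQ$-vector space of fiber curve classes spanned by the three banana curves $\atilde(k), \btilde(k), \ctilde(k)$ of the singular fiber $\Ftilde(k)$. Since the change of basis from $\{\atilde(k), \btilde(k), \ctilde(k)\}$ to $\{\atilde(k)+\ctilde(k),\, \btilde(k)+\ctilde(k),\, \ctilde(k)\}$ is unimodular, Lemma~\ref{lem: atilde,btilde,ctilde in terms of ftilde,fprimetilde, deltatilde} immediately gives
\[
L(k) \;=\; \tfrac{1}{k}\ZZ\,\ftilde \;\oplus\; \tfrac{1}{l}\ZZ\,\fprimetilde \;\oplus\; \tfrac{1}{2N}\ZZ\,\deltatilde,
\]
where as usual $l = N/k$. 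By definition, $\epsilon_{N,k}(\beta) = 1$ precisely when $\beta \in L(k)$, so the lemma reduces to translating the lattice membership condition into the asserted divisibilities.

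Next I would write a general fiber curve class as $\beta = \alpha_{1}\ftilde + \alpha_{2}\fprimetilde + \alpha_{3}\deltatilde$ with $\alpha_{i} \in \QQ$, and use the intersection form of Lemma~\ref{lem: intersection form on smooth fiber of XtildeN} (applied to the restrictions of $\Stilde,\Sprimetilde,\Deltatilde$ to a smooth fiber) to obtain
\[
\beta \cdot \Stilde = N\alpha_{2}, \qquad \beta \cdot \Sprimetilde = N\alpha_{1}, \qquad \beta \cdot \Deltatilde = -2N^{2}\alpha_{3}.
\]
The conditions $k\alpha_{1}, l\alpha_{2}, 2N\alpha_{3} \in \ZZ$ defining membership in $L(k)$ then become, respectively, $l \divides (\beta \cdot \Sprimetilde)$, $k \divides (\beta \cdot \Stilde)$, and $N \divides (\beta \cdot \Deltatilde)$.

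The final step is to observe that the third divisibility is automatic for any effective fiber class. Since $\Deltatilde = \Gammatilde - N\Sprimetilde - \Stilde$, the table in Lemma~\ref{lem: intersection matrix of aij,bij,cij with S,S',Gamma} (pushed down to $\XtildeN$, c.f.\ Lemma~\ref{lem: curve divisor pairings in the diagonal basis}) yields $\atilde(k)\cdot\Deltatilde = -N$, $\btilde(k)\cdot\Deltatilde = -N$, and $\ctilde(k)\cdot\Deltatilde = N$ for every $k \in \Theta_{N}$. Hence any $\ZZ$-combination of banana curves pulled from any collection of singular fibers pairs with $\Deltatilde$ to a multiple of $N$, and in particular every effective fiber class has $N \divides (\beta \cdot \Deltatilde)$. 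Thus for such $\beta$, membership in $L(k)$ is equivalent to $k \divides (\beta \cdot \Stilde)$ and $l \divides (\beta \cdot \Sprimetilde)$, which is exactly the definition of $\epsilon_{N,k}(\beta) = 1$; summing over $k \in \Theta_{N}$ gives the stated formula. The argument is essentially bookkeeping between two bases, and the only mild subtlety—the observation that the $\Deltatilde$-divisibility is automatic and hence absent from the final criterion—is handled by the uniform intersection computation above.
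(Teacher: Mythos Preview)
Your argument is correct and follows essentially the same route as the paper's: both use the unimodular change of basis to $\{\atilde(k)+\ctilde(k),\,\btilde(k)+\ctilde(k),\,\ctilde(k)\}$ together with Lemma~\ref{lem: atilde,btilde,ctilde in terms of ftilde,fprimetilde, deltatilde} to identify the lattice of integral classes on $\Ftilde(k)$, then read off the divisibility conditions via the diagonal pairing of Lemma~\ref{lem: curve divisor pairings in the diagonal basis}, observing that the $\Deltatilde$-condition is automatic. One small wording slip: when you write ``By definition, $\epsilon_{N,k}(\beta)=1$ precisely when $\beta\in L(k)$'', note that in the lemma $\epsilon_{N,k}$ is \emph{defined} by the divisibilities, so what you are really proving is that $\beta\in L(k)$ (i.e., that $F(k)$ contributes to $\epsilon_N(\beta)$) is equivalent to those divisibility conditions.
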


\begin{proof}
An integral class on $F(k)$ given by 
\[
\beta =d_{1}\atilde (k)+d_{2}\btilde (k)+d_{3}\ctilde (k)
\]
can be written as
\[
\beta =d_{1}\cdot \frac{1}{k}\cdot \ftilde +d_{2}\cdot
\frac{1}{l}\cdot \fprimetilde  + (d_{3}-d_{1}-d_{2})\cdot \frac{-1}{2N}\cdot \deltatilde 
\]
and thus satisfies
\[
\beta \cdot \Sprimetilde =d_{1}l,\quad \beta \cdot \Stilde
=d_{2}k,\quad \beta \cdot \Deltatilde =(d_{3}-d_{1}-d_{2})N
\]
and so in particular $k$ divides $ \beta \cdot \Stilde$ and
$l$ divides  $ \beta \cdot \Sprimetilde$. Moreover, $N$ divides $\beta \cdot
\Deltatilde$ for any effective fiber curve class.

Conversely, suppose that $\beta$ is an effective fiber curve class
satisfying  $k\divides (\beta \cdot \Stilde)$ and $l\divides
(\beta \cdot \Sprimetilde)$. Then $\beta$ is represented by an
integral curve class on $F(k)$ since we may define the integers
\[
d_{1}=\frac{1}{l}\beta \cdot \Sprimetilde ,\quad d_{2} = \frac{1}{k}\beta \cdot \Stilde 
\]
and then 
\[
d_{3} = \frac{1}{N}\beta \cdot \Deltatilde +d_{1}+d_{2}.
\]
\end{proof}
Lemma~\ref{lem: formula for epsilonN(beta)} and Equation~\eqref{eqn:
ngbeta = epsilonNbeta * nga} then complete the proof of
Proposition~\ref{prop: formula for GV invariants of XN}.

\appendix
\section{Rigid CY3s related by finite quotients and small
resolutions (with Mike Roth) }\label{sec: reduction to Verills case}

Recall that for a rigid CY3 $X$ defined over $\QQ$ there exists a
weight 4 modular cusp form
\[
f_{X}(q) = \sum_{n=1}^{\infty} a_{n}q^{n}
\] 
uniquely characterized by the condition that 
\begin{equation*}
a_{p} =\Tr (\Frob_{p} \divides H^{3}_{\et}(X_{\overline{\FF}_{p}},\QQ_{l}))
\end{equation*}
for almost all primes $p$.

In \cite[Appendix]{Yui-with-Verrill-appendix} Verrill considered rigid
CY3s which are closely related to our banana nano-manifolds
$\XtildeN$. Namely, let $X^{\Ver}_{N}$ be (any) projective conifold
resolution of the fiber product $S_{N}\times_{\PP^{1}}S_{N}$ (Schoen
\cite{Schoen-88} proved that there exists projective resolutions of
any self-fiber product of a rational surface). Then $X^{\Ver}_{N}$ is
a rigid CY3 with $h^{1,1}(X^{\Ver}_{N})=\sum_{k\in
\Theta_{N}}k^{2}$. Verrill uses a particular model for $S_{N}$ which
is defined over $\QQ$ and proves
that
\[
f_{X^{\Ver}_{N}}(q) = \prod_{k\in \Theta_{N}} \eta(q^{k})^{2} 
\]
independent of the choice of the conifold resolution\footnote{The
existence of a projective conifold resolution follows from a theorem
of Schoen. However, Schoen's argument does not guarantee that the
conifold resolution is defined over $\QQ$. We will address this issue
in Lemma~\ref{lem: quotient and resolution maps are defined over
Q}.}
\[
X^{\Ver}_{N}\xrightarrow{\pi_{1}}S_{N}\times_{\PP^{1}}S_{N}.
\]
We note that the quotient of $S_{N}\times_{\PP^{1}}S_{N}$ by $G_{N}$
acting on the second factor is 
\[
S_{N}\times_{\PP^{1}}S_{\sing} = X_{\sing}.
\]
Thus $\XtildeN$ and $X^{\Ver}_{N}$ are related by the following
sequence of maps
\begin{equation}\label{eqn: XVer->...->XtildeN}
X^{\Ver}_{N}\xrightarrow{\pi_{1}}S_{N}\times_{\PP^{1}}S_{N}
\xrightarrow{q_{1}} X_{\sing} \xleftarrow{\pi_{2}} X_{N}
\xrightarrow{q_{2}} \XtildeN 
\end{equation}
where the maps $\pi_{1}$ and $\pi_{2}$ are crepant resolutions and
$q_{1}$ and $q_{2}$ are both quotients by the action of the finite
group $G_{N}$. 

We will show that the above maps and varieties are defined over $\QQ$
and that they induce an isomorphism
\[
H^{3}_{\et}(X^{\Ver}_{N},\QQ_{l}) \cong H^{3}_{\et}(\XtildeN ,\QQ_{l})
\]
which is compatible with the action of Frobenius so that in particular
\[
f_{X^{\Ver}_{N}}(q) = f_{\XtildeN}(q).
\]
Let $V$ be a variety defined over $\CC$.  We recall that the both the
ordinary cohomology groups $H^{i}(V,\QQ)$, and the groups
$H_{c}^{i}(V,\QQ)$ of cohomology with compact support carry weight
filtrations, increasing filtrations $F_0\subseteq F_1 \subseteq F_2
\subseteq \cdots$ of $\QQ$-subspaces.  We denote by
$\Gr_{m}H^{i}(V,\QQ)$ (or $\Gr_{m}H_{c}^{i}(V,\QQ)$) the quotient
$F_{m}/F_{m-1}$.   Given a map $\pi\colon V\longrightarrow V'$ of
varieties, the pullback maps on cohomology are compatible with the
weight filtrations, and so induce maps of the graded pieces.

Moreover, if one has an exact sequence of cohomology groups, for
instance an excision sequence 
\[
\cdots \longrightarrow H^{i-1}(Z,\QQ)
\longrightarrow H^{i}_{c}(U,\QQ) \longrightarrow H^{i}(V,\QQ)
\longrightarrow H^{i}(Z,\QQ) \longrightarrow\cdots
\]
then for any $m$
the sequence of graded pieces 
\[
\cdots \longrightarrow
\Gr_{m}H^{i-1}(Z,\QQ) \longrightarrow \Gr_{m}H^{i}_{c}(U,\QQ)
\longrightarrow \Gr_{m}H^{i}(V,\QQ) \longrightarrow
\Gr_{m}H^{i}(Z,\QQ) \longrightarrow\cdots
\]
is again exact.  This is a
consequence of the fact that the maps on cohomology are not only
compatible with the filtrations, but are strictly compatible
\cite[Proposition 1.1.11]{Deligne-Hodge-II}.

\newcommand{\AlphaList}{\renewcommand{\labelenumi}{{({\em\alph{enumi}})}}}
\renewcommand{\labelenumii}{({\em \alph{enumi}\Old{\arabic{enumii}}})}
\AlphaList
\newcommand{\Old}[1]{\oldstylenums{#1}}

\begin{lemma}\label{lem: resolutions V->V' with some exceptional sets induces iso on Gr3H3}

Let $V$ and $V'$ be projective threefolds defined over $\CC$ and
$\pi\colon V\longrightarrow V'$ a birational map.  Let $Z\subset V$ be
the exceptional locus of $\pi$ and $Z'=\pi(Z)$. 

\begin{enumerate} \item Suppose that $Z$ and $Z'$ have the property
that $\Gr_{3}H^2(Z,\QQ)=\Gr_{3}H^3(Z,\QQ)=0$ and that
$\Gr_{3}H^2(Z',\QQ)=\Gr_{3}H^3(Z',\QQ)=0$.  Then the pullback map
$\pi^{*}\colon H^3(V',\QQ)\longrightarrow H^3(V,\QQ)$ induces an
isomorphism on $\Gr_3$. 

\medskip \item The vanishing conditions in (a) hold in each of the
following cases:

\begin{enumerate} 
\item For the morphisim 
$\pi_2\colon X_{N}\longrightarrow X_{\sing}$ above;
\item When $\pi\colon
V\longrightarrow V'$ is a conifold resolution 
(e.g., 
$\pi_1\colon X^{\Ver}_{N}\longrightarrow S_{N}\times_{\PP^{1}}S_{N}$).
\end{enumerate}
\end{enumerate}
\end{lemma}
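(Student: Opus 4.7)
The plan is to prove (a) by comparing excision sequences via naturality of $\pi^*$, then to verify the vanishings in (b) using general dimension and weight bounds.

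For part (a), the key observation is that $\pi$ restricts to an isomorphism on $U \coloneqq V \setminus Z = V' \setminus Z'$. The excision long exact sequences for the pairs $(V,Z)$ and $(V',Z')$ therefore fit into a commutative ladder via $\pi^*$, with a common intermediate term $H^3_c(U)$ appearing via the identity map. I would pass to $\Gr_3$ of the relevant stretch
\[
\Gr_3 H^2(Z) \to \Gr_3 H^3_c(U) \to \Gr_3 H^3(V) \to \Gr_3 H^3(Z),
\]
and likewise for $V'$; both remain exact by strictness of morphisms of mixed Hodge structures (\cite[Prop.~1.1.11]{Deligne-Hodge-II}). The four vanishing hypotheses then force both maps $\Gr_3 H^3_c(U) \to \Gr_3 H^3(V)$ and $\Gr_3 H^3_c(U) \to \Gr_3 H^3(V')$ to be isomorphisms, and commutativity of the ladder yields that $\pi^*$ induces an isomorphism on $\Gr_3 H^3$.

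For part (b), the crux is that in both cases $Z$ is a projective variety of complex dimension at most $1$ and $Z'$ is $0$-dimensional. In case (i), the local toric analysis of Section~\ref{sec: local toric geometry} shows that above each of the $12$ singular points of $X_{\sing}$, the exceptional set is the chain $c_1 \cup b_1 \cup \dotsb \cup b_{l-1} \cup c_l$ of $2l-1$ rational curves from Lemma~\ref{lem: Xloc is smooth and intersection numbers}, while $Z'$ is the $12$-point singular locus of $X_{\sing}$. In case (ii), the exceptional locus of a conifold resolution is a disjoint union of $\PP^1$'s over a finite set of nodes. In each case $H^3(Z) = H^3(Z') = 0$ simply by real dimension, and $\Gr_3 H^2(Z) = \Gr_3 H^2(Z') = 0$ follows from Deligne's upper bound on weights: for any projective variety $W$, the weights of $H^i(W)$ lie in $[0,i]$, so no weight-$3$ class can appear in $H^2$.

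The argument is essentially formal, combining excision, strict compatibility of MHS morphisms with the weight filtration, and standard dimension and weight bounds. I do not anticipate any serious obstacle; the mildly delicate points are the identification of the exceptional loci (already worked out in the body of the paper for (i)) and the implicit fact that the pullback and excision maps carry canonical mixed Hodge structures, both of which are standard.
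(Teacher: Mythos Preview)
Your argument for part (a) is correct and matches the paper's: both set up the commutative ladder of excision sequences, pass to $\Gr_3$ via strict compatibility, and invoke the four vanishing hypotheses. Your treatment of (b)(ii) likewise agrees with the paper.

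There is, however, a genuine gap in (b)(i): you have misidentified the exceptional loci of $\pi_2\colon X_N\to X_{\sing}$. The map $\pi_2$ factors as $X_N\to X_{\con}\to X_{\sing}$, and the second arrow is $\mathrm{id}_{S}\times_{\PP^1}\rho$ with $\rho\colon S'\to S'_{\sing}$ the minimal resolution. This map already fails to be an isomorphism over the entire curve $I_k\times\{n'\}$ (where $n'$ is the node of the $I_1^{\ZZ_l}$ fibre), not merely over the twelve singular points. Hence $Z'\subset X_{\sing}$ is the one\nobreakdash-dimensional union of these nodal curves, and $Z\subset X_N$ is the two\nobreakdash-dimensional union of the corresponding components of the singular fibres (a normal crossing divisor). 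The chain $c_1\cup b_1\cup\dotsb\cup c_l$ you cite is the exceptional set of the \emph{local} model $X^{\loc}\to X^{\loc}_{\sing}$, which is a formal chart centred at one singular point; globally the picture is larger.

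Because $Z$ has complex dimension $2$, your real-dimension argument for $H^3(Z)=0$ fails, and $\Gr_3 H^3(Z)=0$ requires a separate justification. The paper supplies one: since $Z$ is built from toric strata, its class in the Grothendieck ring of varieties is a polynomial in $\mathbb{L}=[\AA^1]$, so its weight polynomial is supported in even weights and $\Gr_3 H^3(Z)=0$. Your weight-bound argument for $\Gr_3 H^2(Z)=\Gr_3 H^2(Z')=0$ (properness) and your dimension argument for $H^3(Z')=0$ (now with $\dim_{\CC}Z'=1$) remain valid; only the treatment of $\Gr_3 H^3(Z)$ needs to be replaced.
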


\begin{proof}

Let $$U=V-Z,\rule{0.25cm}{0cm} U'=V'-Z',$$ and note that $\pi$ induces an isomorphism $U\cong U'$.

The excision exact sequences for $U=V-Z$ and $U'=V'-Z'$ are compatible with the maps $\pi^{*}$ on cohomology
and lead to the following commutative diagram with exact rows:
\[
\begin{tikzcd}
H^{2}(Z',\QQ) \arrow{r} \arrow{d}{\pi^{*}} &
H^{3}_{c}(U',\QQ) \arrow{r}{i} \arrow[d,"\wr"',"\pi^{*}"] &
H^{3}(V',\QQ) \arrow{r}{} \arrow{d}{\pi^{*}} & H^{3}(Z',\QQ) \arrow{d}{\pi^{*}}   \\
H^{2}(Z,\QQ) \arrow{r}{} & H^{3}_{c}(U,\QQ)
\arrow{r}{} & H^{3}(V,\QQ) \arrow{r}{} & H^{3}(Z,\QQ)
\end{tikzcd}
\]
Passing to $\Gr_3$ and using the vanishing hypotheses, this diagram 
becomes 
\[
\begin{tikzcd}
0\arrow{r} &
H^{3}_{c}(U',\QQ) \arrow{r}{i} \arrow[d,"\wr"',"\pi^{*}"] &
\Gr_{3}H^{3}(V',\QQ) \arrow{r}{} \arrow{d}{\pi^{*}} & 0 \\
0 \arrow{r}{} & \Gr_{3}H^{3}_{c}(U,\QQ)
\arrow{r}{} & \Gr_{3}H^{3}(V,\QQ) \arrow{r}{} & 0
\end{tikzcd}
\]
Consequently the map $\Gr_{3}H^{3}(V',\QQ)\longrightarrow \Gr_{3}H^{3}(V,\QQ)$ is an isomorphism.

We next check the vanishing conditions for $\pi_2\colon
X_{N}\longrightarrow X_{\sing}$.  In this case $Z'\subset X_{\sing}$
is given by the union of the curves $I_{k}\times n'$ where
$I_{k}\subset S$ are the singular fibres and $n'\in S'_{\sing}$ is the
nodal point in the corresponding fibre.  Since $Z'$ is complex
one-dimensional, $H^3(Z',\QQ)=0$.  Since $Z'$ is proper, all weights
of $H^{i}(Z',\QQ)$ are $\leq i$, by \cite[Th\'{e}or\`{e}me
8.2.4]{Deligne-Hodge-III}.  In particular all weights of $H^2(Z',\QQ)$
are $\leq 2$ and so $\Gr_{3}H^2(Z',\QQ)=0$.

The exceptional locus $Z\rightarrow Z'$ is a union of components in
the singular fibres of $X_{N}$ which by the local toric description of
Section~\ref{sec: local toric geometry} is a normal crossing divisor.
The class of $Z$ in the Grothendieck group of varieties is a
polynomial in $\mathbb{L}=[\AA^{1}]$, the class of the affine line. It
follows that the weight polynomial of $Z$ is supported in even degrees
and in particular, we have that $\Gr_{3}H^{3}(Z)=0$.  Since $Z$ is
proper, $\Gr_{3}H^2(Z,\QQ)=0$ as above.

Finally, we check the vanishing conditions for a conifold resolution.
Here $Z'$ is a finite set of points and so has no cohomology above
$H^0$, and $Z$ is a curve, and so the degree $3$ parts of $H^2$ and
$H^3$ vanish as in the previous case.
\end{proof}

\begin{lemma}\label{lem: quotient and resolution maps are defined over
Q}
The morphisms and varieties given in equation~\eqref{eqn:
XVer->...->XtildeN} are all defined over $\QQ$.
\end{lemma}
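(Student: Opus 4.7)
The plan is to trace through the chain \eqref{eqn: XVer->...->XtildeN} one arrow at a time, starting with Verrill's $\QQ$-rational model of $S_{N}$, and then to construct $\pi_{1}$ and $X^{\Ver}_{N}$ by hand so as to sidestep the fact that Schoen's existence argument gives no control over the field of definition. First I would record that Verrill fixes an explicit Weierstrass model of $S_{N}\to\PP^{1}$ over $\QQ$ and that the Mordell--Weil group $G_{N}$ acts by translations as a $\QQ$-morphism $G_{N}\times_{\QQ}S_{N}\to S_{N}$. For $N=5,6$ this is immediate from the modular interpretation of the base as $Y_{1}(N)/\QQ$ and the fact that $G_{N}\cong\ZZ/N$ is constant. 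For $N=8,9$ individual sections may require a field extension (e.g.\ for $N=9$ the full $3$-torsion requires $\zeta_{3}$ by the Weil pairing), but $G_{N}$ is realized as a $\QQ$-rational finite flat subgroup scheme of the relative Jacobian of $S_{N}\to\PP^{1}$ because the Mordell--Weil subgroup is $\Gal(\overline{\QQ}/\QQ)$-stable. Hence $q\colon S_{N}\to S'_{\sing}=S_{N}/G_{N}$ and its minimal resolution $S'\to S'_{\sing}$ are defined over $\QQ$.

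Granted this, the middle of the chain is immediate: the fiber products $S_{N}\times_{\PP^{1}}S_{N}$, $X_{\sing}=S_{N}\times_{\PP^{1}}S'_{\sing}$, and $X_{\con}=S_{N}\times_{\PP^{1}}S'$ are $\QQ$-rational, as is $q_{1}=\operatorname{id}\times q$. The divisor $\Gamma\subset X_{\con}$ is the proper transform of the graph $\gamma\subset X_{\sing}$ of the $\QQ$-morphism $q$, hence $\Gamma$ is $\QQ$-rational, so $X_{N}=\Bl_{\Gamma}(X_{\con})$ and $\pi_{2}$ are $\QQ$-rational. The diagonal action of the $\QQ$-group scheme $G_{N}$ on $X_{N}$ is a $\QQ$-action, so $\XtildeN=X_{N}/G_{N}$ and $q_{2}$ are $\QQ$-rational.

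To complete the picture I would construct $X^{\Ver}_{N}$ explicitly as a $\QQ$-rational conifold resolution. Let $\Delta_{G}:=q_{1}^{-1}(\gamma)\subset S_{N}\times_{\PP^{1}}S_{N}$, a $\QQ$-rational Weil divisor which over $\overline{\QQ}$ decomposes as $\bigcup_{g\in G_{N}(\overline{\QQ})}\Gamma_{g}$, the Galois-stable union of the graphs of translations. Define $X^{\Ver}_{N}:=\Bl_{\Delta_{G}}(S_{N}\times_{\PP^{1}}S_{N})$, with $\pi_{1}$ the structure morphism; both are $\QQ$-rational. Smoothness follows from a local toric calculation analogous to Section~\ref{sec: local toric geometry}: at each conifold point $(n_{i},n_{j})\in I_{k}\times I_{k}$ of $S_{N}\times_{\PP^{1}}S_{N}$, exactly one component $\Gamma_{g}$ of $\Delta_{G}$ passes through the singularity (namely the one for which $gn_{i}=n_{j}$), and $\Gamma_{g}$ is locally a non-Cartier Weil divisor in the conifold $\{ab=cd\}$ whose blowup yields the small resolution. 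Since Verrill's formula for $f_{X^{\Ver}_{N}}$ is independent of the choice of projective conifold resolution, we may adopt this $\QQ$-rational $X^{\Ver}_{N}$.

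The main obstacle is the scheme-theoretic statement in the first paragraph: for $N=8,9$, where the geometric Mordell--Weil group is not elementwise defined over $\QQ$, one must verify that its translation action on Verrill's explicit equations is nonetheless a $\QQ$-morphism. This reduces to a concrete check using Galois-stability of the Mordell--Weil torsion subgroup scheme of the relative Jacobian of $S_{N}\to\PP^{1}$.
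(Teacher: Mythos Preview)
Your handling of $q_{1}$, $\pi_{2}$, and $q_{2}$ matches the paper's argument, and you correctly flag the subtlety that for $N\in\{8,9\}$ the individual torsion sections need not be $\QQ$-rational while the group scheme is. The paper resolves this simply by writing down explicit equations for the action in Appendix~\ref{appendix: the surfaces SN}; for $N=9$ one generator does involve a primitive cube root of unity, so your group-scheme framing is the right perspective, though the paper does not make this explicit.

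There is, however, a genuine gap in your construction of $X^{\Ver}_{N}$. Your local claim that ``exactly one component $\Gamma_{g}$ of $\Delta_{G}$ passes through'' each conifold point $(n_{i},n_{j})\in I_{k}\times I_{k}$ is false. Since $G_{N}$ acts transitively on the $k$ nodes of $I_{k}$ with stabilizer of order $N/k$, there are exactly $N/k$ elements $g\in G_{N}$ with $g\cdot n_{i}=n_{j}$, and hence $N/k$ distinct branches of $\Delta_{G}$ meet at that conifold point. Your appeal to the single-plane local model therefore does not justify smoothness of $\Bl_{\Delta_{G}}(S_{N}\times_{\PP^{1}}S_{N})$ when $k<N$. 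It may well be that the blowup is still smooth---all $N/k$ planes lie in the same ruling of the local conifold, and indeed for $N=9$ (where $N/k=3$ at every singular fiber) the paper itself uses exactly the single blowup of the full $\ZZ_{3}\times\ZZ_{3}$-orbit---but establishing this requires a genuinely different local computation than the one you sketched, and your ``analogous to Section~\ref{sec: local toric geometry}'' does not cover it.

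The paper avoids this issue entirely by constructing the $\QQ$-rational conifold resolution via a \emph{sequence} of blowups of $H$-orbits of the diagonal for various subgroups $H\subset G_{N}$, tailored case by case: for $N=5$, the diagonal followed by the full $\ZZ_{5}$-orbit; for $N=6$, the diagonal, then the $\ZZ_{2}$-orbit, then the $\ZZ_{3}$-orbit; for $N=8$, the $\ZZ_{2}$-orbit then the $\ZZ_{4}$-orbit; for $N=9$, the full orbit in one step. Each center is a $\QQ$-rational Weil divisor, and the sequence is arranged so that the local picture at each remaining conifold point reduces to the standard single-plane case.
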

\begin{proof}
As we make explicit in Appendix~\ref{appendix: the surfaces SN}, both
Verrill's model for $S_{N}\to \PP^{1}$ and the group action
$G_{N}\times S_{N}\to S_{N}$ are defined over $\QQ$. Consequently,
$S_{N}\times_{\PP^{1}}S_{N}$,
$X_{\sing}=S_{N}\times_{\PP^{1}}(S_{N}/G_{N})$ and the morphism
$q_{1}$ are all defined over $\QQ$. The map $q_{1}$ is the composition
of $X_{N}\to X_{\con} $ and $X_{\con}\to X_{\sing}$. This later map is
induced by the minimal resolution $S'_{N}\to S_{N}/G_{N}$ on the
second factor which is defined over $\QQ$ because we may take $S'_{N}$
to be the so-called $G$-Hilbert scheme which is a component of the
Hilbert scheme of substacks of the stack quotient $[S_{N}/G_{N}]$ and
the morphism to be the Hilbert-Chow morphism (the Hilbert scheme of
substacks of a stack defined over $\QQ$ is defined over $\QQ$). The
morphism $X_{N}\to X_{\con}$ is given by the blowup of $\Gamma$ which
is a divisor defined over $\QQ$ since it is the proper transform of
the graph of a morphism defined over $\QQ$.

Finally, we need to see that Verrill's conifold resolution
$X_{N}^{\Ver}\to S_{N}\times_{\PP^{1}}S_{N}$ is defined over $\QQ$
(this issue does not appear to be addressed in the original
paper). Schoen proves the existence of a projective resolution of any
$S\times_{\PP^{1}}S$ self-product of a rational elliptic surface with
singular fibers of $I_{n}$ type and Verrill quotes this
result. However, Schoen's argument (first blow up the diagonal and
then successively blow up irreducible components of the singular
fibers) does not guarentee that the result is defined over $\QQ$ since
components of the singular fibers may not be defined over $\QQ$
(indeed they are not in general in our case). We may nevertheless find
a conifold resolution defined over $\QQ$ as follows. The diagonal, the
$G_{N}$-orbits of the diagonal, and $H$-orbits of the diagonal for
subgroups $H\subset G_{N}$ are all Weil divisors defined over
$\QQ$. We can obtain a projective conifold resolution of
$S_{N}\times_{\PP^{1}}S_{N}$ defined over $\QQ$ by successively
blowing up those Weil divisors and their proper transforms in various
orders. The specifics of this process depend on $N$, which is not hard
to determine with explicit analysis of the singular
fibers. Explicitly, for $N=5$ it suffices to first blowup the diagonal
and then blowup the proper transform of the $\ZZ_{5}$ orbit of the
diagonal. For $N=6$, first blow up the diagonal, then blow up the
proper transform of the $\ZZ _{2}$-orbit of the diagonal, then blowup
the proper transform of the $\ZZ_{3}$-orbit of the diagonal. For
$N=8$, first blowup the $\ZZ_{2}$-orbit of the diagonal and then
blowup the proper transform of the $\ZZ_{4}$-orbit of the
diagonal. For $N=9$, blowing up the full $\ZZ_{3}\times \ZZ_{3}$ orbit
of the diagonal works.
\end{proof}

We will also use the following standard result which may be easily
proved using the Leray-Serre spectral sequence.
\begin{lemma}\label{lem: H^i(V/G)=H^i(V)^G}
Let $G$ be a finite group acting on $V$. Then $H^{i}(V/G,\QQ)\cong
H^{i}(V,\QQ)^{G}$ where the inclusion $H^{i}(V/G,\QQ)\cong
H^{i}(V,\QQ)^{G}\hookrightarrow H^{i}(V,\QQ)$ is given by $p^{*}$ 
where $p:V\to V/G$.
\end{lemma}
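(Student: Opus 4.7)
The plan is to argue via the Leray spectral sequence for the finite quotient map $p \colon V \to V/G$, exploiting the fact that $|G|$ is a unit in $\QQ$ so that taking $G$-invariants is an exact functor.

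First, since $p$ is a finite morphism the higher direct images $R^{q} p_{*} \QQ_{V}$ vanish for $q > 0$, so the Leray spectral sequence collapses to a natural isomorphism $H^{i}(V,\QQ) \cong H^{i}(V/G, p_{*}\QQ_{V})$. Because $G$ acts on $V$ over $V/G$, this isomorphism is $G$-equivariant, where the action on the right is induced from the $G$-action on the sheaf $p_{*}\QQ_{V}$.

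Next, I would identify $(p_{*}\QQ_{V})^{G}$ with the constant sheaf $\QQ_{V/G}$. The adjunction morphism $\QQ_{V/G} \to p_{*}p^{-1}\QQ_{V/G} = p_{*}\QQ_{V}$ is injective, and at a geometric point $\bar{x} \in V/G$ the stalk $(p_{*}\QQ_{V})_{\bar{x}} = \QQ^{p^{-1}(\bar{x})}$ carries a transitive $G$-action, whose invariants are precisely the diagonal copy of $\QQ$. Thus this adjunction identifies $\QQ_{V/G} = (p_{*}\QQ_{V})^{G}$ as subsheaves of $p_{*}\QQ_{V}$.

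Finally, because $|G|$ is invertible in $\QQ$, the averaging operator $\tfrac{1}{|G|}\sum_{g \in G} g$ is a $G$-equivariant projector onto the invariants, splitting $p_{*}\QQ_{V} = (p_{*}\QQ_{V})^{G} \oplus N$. Applying $H^{i}(V/G,-)$ respects this splitting, yielding
\[
H^{i}(V/G, p_{*}\QQ_{V})^{G} \;=\; H^{i}\bigl(V/G, (p_{*}\QQ_{V})^{G}\bigr) \;=\; H^{i}(V/G,\QQ).
\]
Combined with the Leray isomorphism this gives $H^{i}(V/G,\QQ) \cong H^{i}(V,\QQ)^{G}$, and unwinding the identifications one sees the inclusion is realized by $p^{*}$. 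The main (mild) technical point, rather than a serious obstacle, is verifying that the $G$-action on $H^{i}(V/G, p_{*}\QQ_{V})$ induced from the sheaf matches the natural $G$-action on $H^{i}(V,\QQ)$ under the Leray isomorphism; this is the standard functoriality of the spectral sequence for a $G$-equivariant morphism.
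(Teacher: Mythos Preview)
Your argument is correct and is precisely the Leray spectral sequence approach the paper alludes to; the paper itself does not spell out a proof but simply remarks that the result is standard and ``may be easily proved using the Leray-Serre spectral sequence.'' Your write-up fills in exactly those details.
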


We now complete the proof of Theorem~\ref{thm: theorem on weight four
cusp forms}. 
We examine the maps on cohomology induced by \eqref{eqn:
XVer->...->XtildeN} :
\[
H^{3}(X^{\Ver}_{N},\QQ) \xleftarrow{\pi_{1}^{*}}
H^{3}(S_{N}\times_{\PP^{1}}S_{N},\QQ) \xhookleftarrow{q_{1}^{*}}
H^{3}(X_{\sing},\QQ)\xrightarrow{\pi^{*}_{2}} H^{3}(X_{N},\QQ)
\xhookleftarrow{q_{2}^{*}} H^{3}(\XtildeN,\QQ )
\]
Restricting to the weight 3 graded piece of the above and using the
fact that $X^{\Ver}_{N}$, $X_{N}$, and $\XtildeN$ are non-singular
projective threefolds we get 
\[
H^{3}(X^{\Ver}_{N},\QQ) \xleftarrow{\pi_{1}^{*}}
\Gr_{3}H^{3}(S_{N}\times_{\PP^{1}}S_{N},\QQ) \xhookleftarrow{q_{1}^{*}}
\Gr_{3}H^{3}(X_{\sing},\QQ)\xrightarrow{\pi^{*}_{2}} H^{3}(X_{N},\QQ)
\xhookleftarrow{q_{2}^{*}} H^{3}(\XtildeN,\QQ)
\]
By Lemma~\ref{lem: resolutions V->V' with some exceptional sets induces iso on Gr3H3}
$\pi^{*}_{2}$ and $\pi^{*}_{1}$ are
isomorphisms on the degree $3$ pieces. By Lemma~\ref{lem: H^i(V/G)=H^i(V)^G}, 
$q_{1}^{*}$ and $q_{2}^{*}$ are both injective. 
Since $X^{\Ver}_{N}$ and $\XtildeN$
are both rigid CY3s, $H^{3}(X^{\Ver}_{N},\QQ)$ and 
$H^{3}(\XtildeN,\QQ)$ are
both isomorphic to $\QQ\oplus \QQ$ and hence the injection
$(\pi_{1}^{*})^{-1}\circ q_{1}^{*}\circ (\pi_{2}^{*})^{-1}\circ
q_{2}^{*}$ is an isomorphism
\[
H^{3}(\XtildeN,\QQ )\cong H^{3}(X^{\Ver}_{N},\QQ).
\]

Fix a prime $l$.  Then by the comparison theorem \cite[Lecture~11, Theorem~4.4]{SGA4}
$H^{3}_{\et}(\tilde{X}_{N},\QQ_{l}) \cong
H^{3}(\tilde{X}_{N},\QQ)\otimes_{\QQ}\QQ_{l}$ and 
$H^{3}_{\et}(X_{N}^{\Ver},\QQ_{l}) 
\cong H^{3}(X_{N}^{\Ver},\QQ)\otimes_{\QQ}\QQ_{l}$.
The isomorphisms provided by the comparision theorem are compatible 
with pullbacks, and so the map
$(\pi_{1}^{*})\circ q_1^{*}\circ(\pi_2^{*})^{-1}\circ q_{2}^{*}$ also induces an isomorphism 
$$H^3_{\et}(\tilde{X}_{N},\QQ_{l}) \cong H^3_{\et}(X_{N}^{\Ver},\QQ_{l}).$$

As a consequence of Lemma~\ref{lem: quotient and resolution maps are defined over
Q}, the maps $\pi_1$, $\pi_2$, $q_1$, and $q_2$ are all defined over 
$\QQ$, and so this isomorphism on cohomology groups is also an 
isomorphism of $\Gal(\overline{\QQ}/\QQ)$ representations. 
Thus we have the equality 
$$f_{\tilde{X}_{N}}(q) = f_{X^{\mbox{\scriptsize\sf Ver}}_{N}}(q).$$
This completes the proof of Theorem~\ref{thm: theorem
on weight four cusp forms}.

\bigskip

It remains to prove Propostion~\ref{prop: f_E(q)^2 = f_XN(q^2)}. The
four singular fibers of $S_{N} \to \PP^{1}$ occur at points
$p_{1},\dotsc ,p_{4}\in \PP^{1}$ which are given explicity in
\cite[Table 2]{Yui-with-Verrill-appendix}. In all cases, $p_{1} =
\infty$, and the cross-ratio of the four points is given by
\[
\lambda = \frac{p_{3}-p_{2}}{p_{3}-p_{4}}
\]
If $E_{N}$ is the double cover of $\PP^{1}$ branched at $\{p_{1}, \ldots, p_{4}\}$, then the $j$-invariant of $E_{N}$ is given by
\[
j(\lambda) = 2^{8} \frac{(\lambda^{2}-\lambda+1)^{3}}{\lambda^{2}(1-\lambda)^{2}}
\]
Thus, one can compute $j(\lambda)$ in each case, and use the LMFDB
\cite{LMFDB} to find a suitable model of $E_{N}$ over $\QQ$ whose
corresponding weight $2$ cusp modular form $f_{E_{N}}(q)$ satisfies
$f_{E_{N}}(q)^{2} = f_{\XtildeN}(q^{2})$. The data is presented in the
following table, together with the appropriate LMFDB labels:

\vskip2ex

\begin{center}
\begin{tabular}{lllll}\label{table: LMFDB Data}
$N$&  $j(\lambda)$  &	$\QQ$-model of $E_{N}$ (LMFDB)  &  Weierstrass form of $\QQ$-model & $f_{E_{N}}(q)$ \\[3pt]
\hline\\[-8pt]
$5$&	  $\frac{488095744}{125}$ & 20.a1&	$y^{2}=x^{3}+x^{2}-41x-116$ & $\eta(q^{10})^{2} \eta(q^{2})^{2}$ \\[3pt]
$6$&	  $ \frac{1556068}{81}$ & 24.a3&   $y^{2}=x^{3}-x^{2}-24x-36$	& $\eta(q^{12})\eta(q^{6}) \eta(q^{4}) \eta(q^{2})$ \\[3pt]
$8$&	  $1728$  & 32.a3&  $y^{2}=x^{3}-x$ &  $\eta(q^{8})^{2} \eta(q^{4})^{2}$	\\[3pt]
$9$&	  $0$  & 36.a3 &   $y^{2} = x^{3}-27$	 & $\eta(q^{6})^{4}$ \\[3pt]

\end{tabular}
\end{center}

\section{Explicit $\QQ$-models for the surfaces $S_{N}$}\label{appendix: the surfaces SN}

For reference, we include some basic explicit data for the models over $\QQ$ of the surfaces $S_{N}$ studied by Beauville \cite{Beauville-Elliptic-surface-with-4-singularfibers} and
by Verrill \cite{Yui-with-Verrill-appendix}. In this model, the surface $S_{N}$ is given by the minimal resolution
of a hypersurface $\overline{S}_{N}\subset
\PP^{2}\times \PP^{1}$ with equation $f_{N}(x,y,z,\lambda ,\mu )=0$,
which is homogeneous of degree 3 and 1 in the variables $(x,y,z)\in \PP^{2}$
and  $(\lambda ,\mu )\in \PP^{1}$ respectively. 

In each case the Mordell-Weil group $G_{N}$ is finite, and we include here explicit
equations for the group action $G_{N}\times S_{N}\to S_{N}$ (which to
our knowledge does not appear anywhere else in the
literature). Departing notationally from Beauville and Verrill, we
choose to index the cases by the order $N$ of the Mordell-Weil
group. Note that the $N=3$ and $N=4$ cases do not arise in the main
body of the paper, as they do not lead to a construction of a banana
nano-manifold. 

\vskip2ex  

\begin{center}
\begin{tabular}{llll}\label{table: SN equations}
$N$&	$f_{N}$&	$G_{N}$&	Generator(s) for the $G_{N}$ action \\[3pt]
\hline\\[-8pt] 
$3$&	$\mu (x^{2}y+y^{2}z+z^{2}x) - \lambda xyz$  & $\ZZ_{3}$&
$(x,y,z)\mapsto (y,z,x)$\\[3pt] 

$4$&	$\mu (x+y)(xy-z^{2}) -\lambda xyz$& $\ZZ_{4}$&	$(x,y,z)\mapsto (xy,-z^{2},xz)$\\[3pt]	
 
$5$&	$\mu x(x-y)(y-z) -\lambda xyz$& $\ZZ_{5}$&
$(x,y,z)\mapsto (y(x-z),-yz,z(x-y))$\\[3pt]	 

$6$&	$\mu (x+y+z)(xy+yz+zx) - \lambda xyz$& $\ZZ_{6}$&
$(x,y,z)\mapsto (xy,yz,xz)$\\[3pt]	 

$8$&	$\mu x(x^{2}+z^{2}+2zy)-\lambda z(x^{2}-y^{2})$&
$\ZZ_{4}\times \ZZ_{2}$ &  \text{(See below)}\\[3pt] 
 
$9$&	$\mu (x^{3}+y^{3}+z^{3})- \lambda xyz$& $\ZZ_{3}\times
\ZZ_{3}$&	$(x,y,z)\mapsto (y,z,x)$\\[3pt] 
 &	&&	$(x,y,z)\mapsto (x,\omega y,\omega^{2}z),\quad \omega^{3}=1$\\ 

\end{tabular}
\end{center}

\vskip2ex

\noindent In the case of $N=8$, the following maps of order $4$ and
$2$, respectively, generate the $\ZZ_{4} \times \ZZ_{2}$ action 
\begin{equation*}
\begin{split}
& {\scriptstyle (x,y,z) \,\,\, \mapsto \,\,\, \big(
(x-y)(x-z)^2(x^2+z(2y+z)),\,\,\,\,
(x-y)(x-z)(x^3-x^2z+xz(2y+z)+z(2y^2+2yz+z^2), \,\,\,\,
-(x+y)(x^2+z(2y+z))^2)\big)} \\[4pt] 
& {\scriptstyle (x,y,z) \,\,\, \mapsto \,\,\, \big(x
(y+z)(x^{2}+yz)^{2}(x^{2}+z(2y+z)), \,\,\,\, (x^{2}+yz)(x^{6}
+3x^{4}yz + y^{3}z^{3} + x^{2}z(y^{3}+6y^{2}z+3yz^{2} + z^{3})),
\,\,\,\,  -x^{2}z(y+z)^{3}(x^{2} + z(2y+z)) \big)}  
\end{split}
\end{equation*}

In the case of $N \in \{3, 6, 9\}$, the generators of the action can
be determined by inspection. The remaining cases require a
straightforward calculation using the group law of a generic fiber of
$\overline{S}_{N} \to \PP^{1}$. This smooth cubic curve intersects its
Hessian curve in $9$ inflection points, one of which can be chosen as
the origin. An analysis of the cubic pencil $f_{N} = 0$ determines the
sections, and the translation morphism by a given section can thus be
determined from the group law as a birational automorphism of
$\PP^{2}$.

\bibliography{localbiblio.bib}
\bibliographystyle{plain}

\end{document}